\newcommand{\bb}{\mathbb}
\newcommand{\C}{\bb C}
\newcommand{\h}{\bb H}
\newcommand{\Z}{\bb Z}
\newcommand{\R}{\bb R}
\newcommand{\N}{\bb N}
\newcommand{\Q}{\bb Q}
\newcommand{\s}{\bb S}
\newcommand{\semidirect}{\ltimes}
\newcommand{\q}{\mathcal Q}
\newcommand{\B}{\mathcal B}
\newcommand{\sgn}{\operatorname{sgn}}
\newcommand{\vv}{\mathbf{v}}
\newcommand{\hh}{\mathcal H}
\newcommand{\Exp}{\operatorname{Exp}}
\newcommand{\minuszero}{\backslash\{0\}}
\newcommand{\La}{\Lambda}
\newtheorem{Theorem}{Theorem}
\numberwithin{Theorem}{section}
\newtheorem{Cor}[Theorem]{Corollary}
\newtheorem{Claim}[Theorem]{Claim}
\newtheorem{Prop}[Theorem]{Proposition}
\newtheorem{lemma}[Theorem]{Lemma}
\newtheorem*{remark*}{Remarks}
\newtheorem*{lemma*}{Lemma}
\newtheorem*{claim}{Claim}
\newtheorem*{theorem*}{Theorem}
\numberwithin{equation}{section}
\begin{document}
\title{Cusp Excursions on Parameter Spaces}
\author{Jayadev S.~Athreya}

\subjclass[2000]{primary: 37A17; secondary 37-06, 37-02}
\email{jathreya@iilinois.edu}
\address{Deptartment of Mathematics, University of Illinois Urbana-Champaign, 1409 W. Green Street, Urbana, IL 61801}
\begin{abstract}
We prove several results for dynamics of $SL(d, \R)$-actions on non-compact parameter spaces by studying associated discrete sets in Euclidean spaces. This allows us to give elementary proofs of logarithm laws for horocycle flows on hyperbolic surfaces and moduli spaces of flat surfaces. We also give applications to quantitative equidistribution and Diophantine approximation.
\end{abstract}
\maketitle

\section{Introduction}\label{sec:intro}

The homogeneous space $SL(2, \R)/SL(2, \Z)$ can be viewed from many different perspectives. It is the unit tangent bundle of a non-compact, finite-volume hyperbolic orbifold; it is the moduli space of holomorphic quadratic differentials on tori, and is the space of unimodular lattices in $\R^2$ (see \S\ref{sec:ident} for details on these identifications). Each of these interpretations provide a different family of generalizations:

\medskip

\begin{description}

\item[Hyperbolic Surfaces] $SL(2, \R)/\Gamma$, where $\Gamma$ is a non-uniform lattice in $SL(2, \R)$. \medskip

\item[Quadratic Differentials] $\q_g$, the moduli space of quadratic differentials on compact genus $g$ Riemann surfaces.\medskip

\item[Unimodular Lattices] $X_d: = SL(d, \R)/SL(d, \Z)$, the space of unimodular lattices in $\R^d$, $d \geq 2$.\medskip

\end{description}

\medskip

A common thread is that they are non-compact spaces with a continuous action of a Lie group $G$, and a $G$-equivariant association of a discrete set of vectors in a Euclidean space to each point in the parameter space. The actions of these groups, and in particular various diagonalizable and unipotent subgroups, provide important examples of dynamical systems on non-compact spaces, and many properties of orbits can be understood by studying the behavior of the associated discrete sets.

In this paper we describe an elementary, axiomatic approach to understanding quantitative results on excursions of orbits of diagonal and unipotent orbits away from compact sets on what we call \emph{Minkowski systems}, which abstract the common properties of the examples described above. This approach is inspired by that of~\cite{Weiss}.

Applying our results to particular cases, we  obtain quantitative results for rates of cusp excursions of geodesic and horocycle flows on the unit tangent bundle of hyperbolic surfaces $SL(2, \R)/\Gamma$. These yield non-trivial lower bounds on the deviation of ergodic averages for horocycle flows. When our results are applied to $X_n$, and various bundles over $X_n$,  we obtain information on both homogeneous and inhomogneous Diophantine approximation for systems of linear forms. We obtain `Diophantine'-type results for flows on $\q_g$ and various bundles over $\q_g$.

\subsection{Something old, something new, something borrowed.}\label{sec:borrowed} Like the old proverb about brides, mathematical papers tend to contain `something old, something new, something borrowed...' (the simile seems to break down at `something blue'). This paper grew out of the author attempting to push something old (\cite[Proposition 3.1]{AM1}) as far as possible, in particular to see if it could be used to obtain a logarithm law for horocycle flow on the moduli space of quadratic differentials. After many discussions with many people (see \S\ref{ack}), and a lot of simple linear algebra, the answer was that one could, and in fact one could obtain much more. In the course of this, it turns out that the author rediscovered certain simple ideas, particularly from the beautiful papers of Dani~\cite{Dani}, Kleinbock~\cite{Kleinbock:GAFA}, and Kleinbock-Margulis~\cite{KleinMarg}. Of course, the \emph{raison d'etre} for a paper is to contribute something new. In the author's view, the main novelty of this paper is contained in the following three ideas:

\subsubsection{Dani correspondence for horospherical flows} In retrospect, this paper can be seen as explaining a sort of Dani correspondence for horospherical flows on general moduli spaces, relating diophantine exponents and certain dynamical quantities. This is complementary to the above mentioned works ~\cite{Dani}, \cite{Kleinbock:GAFA}, and~\cite{KleinMarg} in which this correspondence was developed for diagonal flows. To keep our exposition as self-contained as possible, we have included proofs in the diagonal setting whenever possible.

\subsubsection{Logarithm Laws for Horocycle Flows on Moduli Space} In \S\ref{subsubsec:qd}, we obtain logarithm laws (Corollaries~\ref{cor:qd:mm} and ~\ref{cor:qd:mm:cyl}) for the horocycle flow on moduli spaces of quadratic differentials. This is a complementary result to Masur's~\cite{Masur:loglaw} logarithm law for the Teichm\"uller geodesic flow.

\subsubsection{Lower bounds for deviation of ergodic averages} In \S\ref{subsubsec:hyp}, we discuss how logarithm laws for horocycle flow on finite-volume non-compact hyperbolic surfaces can be used to obtain simple proofs of  non-trivial (but non-optimal) lower bounds for the deviation of ergodic averages for horocycle flows.

\subsection{Organization} This paper is structured as follows:  in \S\ref{sec:sl2}, we state our results for the motivating example of $X_2$, as a guide to our more general theorems, which we state in \S\ref{sec:results}. In \S\ref{sec:abs}, we formulate an abstract result (Theorem~\ref{theorem:abstract}) which lies at the heart of our proof, and use it to prove Theorem~\ref{theorem:axiomatic}. In \S\ref{sec:hyp}, we show how to associate discrete sets of vectors in $\R^2$ to points hyperbolic surfaces. In \S\ref{sec:diophexp}, we give a direct proof of the main results from \S\ref{sec:sl2}, and use them as a model to prove Theorem~\ref{theorem:abstract}.

\subsection{Acknowledgements}\label{ack} The ideas for this paper, which grew out of joint work with G.~A.~Margulis~\cite{AM1}, were initially developed on a visit in June 2009 to the Universities of Warwick, Bristol, and East Anglia, supported by the London Mathematical Society. We gratefully acknowledge those institutions for their hospitality and the LMS for making the trip possible.  Many thanks are also due to A.~Ghosh, J.~Marklof, and M.~Pollicott for arranging the LMS grant. Barak Weiss helped explain Lemma~\ref{lemma:john} and Proposition~\ref{prop:hyp:mink}. Thanks also to Y.~Cheung, G.~A.~Margulis, Y.~Minsky, and U.~Shapira for useful discussions. We would like to thank the organizers of the conference on Ergodic Geometry in Orsay in May 2011, where these results were presented, and the subsequent comments from participants greatly improved the paper. We would also like to profusely thank the anonymous referee of the initial version of this paper for drawing our attention to many previous results and also for improving the exposition significantly.

\section{Statement of Results}\label{sec:results} \noindent  In this section, we state our main result, Theorem~\ref{theorem:axiomatic}. We describe applications of this abstract viewpoint to systems of linear forms, quadratic differentials, and hyperbolic surfaces.

\subsection{Minkowski Systems}\label{sec:mahler:minkowski} A \emph{Minkowski} system is a non-compact topological space $X$ equipped with an action of $G = SL(d, \R)$ (with $d \geq 2$) action and a  $G-$equivariant assignment $x \mapsto \Lambda_x \subset \R^d\backslash \{0\}$, where $\Lambda_x$ is infinite and discrete, and moreover, that for all $x \in X$, $\La_x$ satisfies the \emph{Minkowski condition}: there is a$C = C_x$ so that for any $K \subset \R^d$ convex, centrally symmetric, $\mbox{vol}(K)>C$ $\exists \vv$ such that $$\vv \in \Lambda_x \cap K.$$

In many (but not all) of our examples, we also have that the assignment satisfies the following compactness condition, which we call the \emph{Mahler} condition: $A \subset X$ is precompact if and only if $\exists \epsilon_0>0$ such that for all $x \in A$, for all $\vv \in \Lambda_x$, $$\| \vv \| > \epsilon_0.$$ We call systems that satisfy both the Mahler and Minkowski conditions \emph{Mahler-Minkowski} systems. The Mahler condition is not crucial for our results, but allows us to interpret our results in terms of excursions away from compact sets.

\subsubsection{Cusp excursions}\label{subsubsec:cusp} We are interested in the excursions of orbits of various subgroups of $G$ to the subsets $X$ corresponding to the existence of short vectors in the associated discrete set. As mentioned above, in the setting when $X$ is a Mahler system, this can be interpreted in terms of excursions away from compact sets. Let $m, n \in  \N$, be so that $d = m+n$. Write $\R^d = \R^m \times \R^n$, and let $p_1$ and $p_2$ be the associated projections. We say a vector $\vv \in \R^d$ is \emph{vertical} (respectively \emph{horizontal}) if $p_1(\vv) = 0$ (respectively $p_2(\vv) = 0$).

\noindent Let $\| \cdot \|$ be the norm on $\R^d$ given by $\| \vv \| := \max (\|p_1(\vv)\|_2, \|p_2(\vv)\|_2)$ where $\|\cdot \|_2$ denotes the standard Euclidean norm. Define
\begin{equation}\label{eq:alpha1:def}\alpha_1(x) : = \sup_{\vv \in \Lambda_x} \frac{1}{\|\vv\|}\end{equation}

\noindent The Mahler condition can then be restated as saying that $A \subset X$ is precompact if and only if $\alpha_1|_A$ is bounded. The subgroups we consider are the one-parameter diagonal subgroup \begin{equation}\label{eq:gt:def}g_t : = \left(\begin{array}{cc}e^{nt/d} I_m & 0 \\0 & e^{-mt/d} I_n\end{array}\right).\end{equation}
\noindent and the associated horospherical subgroup
\begin{equation}\label{eq:h:def} H =\left \{h_B = \left(\begin{array}{cc}I_m & 0 \\B &  I_n\end{array}\right): B \in M_{n \times m}(\R)\right\}.\end{equation}
\noindent Note that $$g_t h_{B} g_{-t} = h_{e^{-t}B}.$$

\noindent We will measure our cusp excursions as follows: for $t, s>0$, let  \begin{equation}\label{eq:beta:def}\beta_s(x) = \sup_{ h \in \B_s} \log \alpha_1(h x),\end{equation} where $\B_s : = \{h_B: \|B\|_2 \le s\},$ and let
\begin{equation}\label{eq:gamma:def}\gamma_t(x) = \log \alpha_1 (g_t x)\end{equation}
Let \begin{equation}\label{eq:beta:lim:def} \beta(x) = \limsup_{s \rightarrow \infty} \frac{\beta_s(x)}{\log s}\end{equation} and
\begin{equation}\label{eq:gamma:lim:def} \gamma(x) = \limsup_{t \rightarrow \infty} \frac{\gamma_t(x)}{t}\end{equation}

\noindent Note that since all norms are equivalent up to a multiplicative constant, both $\gamma(x)$ and $\beta(x)$ are \emph{independent} of the norm chosen. We have specified the norm since it will be used in our proof. Also note that if $\vv \in \R^d$ is vertical, it is fixed by $h_B$ and contracted by $g_t$, and so if $\La_x$ has vertical vectors, $\gamma(x) = \frac{m}{d}$, and by the Mahler criterion, the $g_t$ orbit of $x$ (and $h x$ for all $h \in H$) is divergent (i.e., leaves every compact set). In many of our applications, the $H$-orbit of $x$ will be compact, yielding $\beta(x) = 0$. 

\subsubsection{Diophantine exponents}\label{subsubsec:dioph} We first define a notion of Diophantine exponents for arbitrary subsets of $\R^2$, and then use it to define a general notion. Let $\Omega \subset \R^2$ be a discrete subset of $\R^2$ without vertical vectors. We say that $\nu >0$ is an \emph{exponent} of $\Omega$ if there is a sequence of vectors $\left(\begin{array}{c}x_j \\y_j\end{array}\right) \in \Omega$ and a $C >0$ such that $|y_j| \rightarrow \infty$ and
\begin{equation}\label{nu2}|x_j| < C |y_j|^{-(\nu -1)}\end{equation}

\noindent That is, there is a sequence of vectors approximating the vertical direction at a rate specified by (\ref{nu2}). Let $\Exp(\Omega)$ denote the set of Diophantine exponents of $\Omega$. We define the \emph{Diophantine exponent} $\mu(\Omega) \in \R \cup \{+\infty\}$ by
\begin{equation}\label{mu2} \mu(\Omega) : = \sup \Exp(\Omega)\end{equation}

\noindent Suppose now that $\La_x$ does not have vertical vectors. Consider the map $\Pi_{m,n}: \R^d \rightarrow \R^2$ given by 
\begin{equation}\label{eq:Pi:def} \Pi_{m,n} (\vv) = (\|p_1(\vv)\|_2, \|p_2(\vv)\|_2)\end{equation}

\noindent Define the set $\Exp(x) = \Exp_{m, n} (x)$ of $(m,n)$-\emph{exponents} of $\Lambda_x$ as the set of exponents of $\Pi_{m,n}(\La_x)$. The set $\Exp(x)$ is the set of all $\nu >0$ so that there is a sequence of vectors $\{\vv_k\} \subset \Lambda_x$ and a $C>0$ satisfying 
$$\|p_1(\vv_k)\|_2 \le C\|p_2(\vv_k)\|_2^{-(\nu-1)}.$$ We will see in \S\ref{subsec:abstract:proof} that the Minkowski condition implies that $\frac{d}{m} \in \Exp(x)$ for all $x \in X$. Let $$\mu(x) = \sup \Exp(x).$$

\begin{Theorem}\label{theorem:axiomatic} For all $x \in X$ such that $\La_x$ does not have vertical vectors
\begin{equation}\label{eq:mm:beta:gamma} \beta(x) = \frac{n}{d} + \gamma(x) = 1- \frac{1}{\mu(x)}\end{equation}

\end{Theorem}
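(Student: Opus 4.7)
The plan is to reduce each of $\beta(x)$, $\gamma(x)$, and $\mu(x)$ to statements about how short the horizontal and vertical components of vectors of $\Lambda_x$ can simultaneously become. Writing each $\vv \in \Lambda_x$ as $(\vv_1, \vv_2)$ and setting $X := \|\vv_1\|_2$, $Y := \|\vv_2\|_2$, the diagonal action $g_t$ rescales the two blocks independently, giving
\[
\alpha_1(g_t x)^{-1} = \min_{\vv \in \Lambda_x} \max\bigl(e^{nt/d} X,\ e^{-mt/d} Y\bigr).
\]
For the horospherical piece, the image $\{B \vv_1 : \|B\|_2 \le s\}$ is the Euclidean ball of radius $s\|\vv_1\|_2$ in $\R^n$, attained by the rank-one matrix $-\vv_2\vv_1^T/\|\vv_1\|_2^2$ when $\|\vv_2\| \le s\|\vv_1\|$; I would use this to establish the closed form
\[
\beta_s(x) = -\log \min_{\vv \in \Lambda_x} \max\bigl(X,\ (Y - sX)_+\bigr),
\]
noting that the absence of vertical vectors guarantees $X > 0$ for every $\vv \in \Lambda_x$.

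With these two identities in hand, both equalities in \eqref{eq:mm:beta:gamma} reduce to parallel Dani-type correspondences. For $\beta(x) \ge 1 - 1/\mu(x)$: given $\nu \in \Exp(x)$ with witnesses $\vv_k \in \Lambda_x$ satisfying $X_k \le C\, Y_k^{-(\nu-1)}$ and $Y_k \to \infty$, the choice $s_k := Y_k / X_k$ balances the two arguments of the $\max$, so $\beta_{s_k}(x) \ge -\log X_k$; since $\log s_k \ge \nu \log Y_k - \log C$, dividing and passing to the limit gives $1 - 1/\nu$. For the reverse inequality: any $\beta' < \beta(x)$ produces $s_k \to \infty$ and minimizing $\vv_k$ with $X_k \le s_k^{-\beta'}$ and $Y_k \le s_k X_k + s_k^{-\beta'} \le 2 s_k^{1-\beta'}$, so $X_k Y_k^{\nu - 1}$ is uniformly bounded with $\nu = 1/(1 - \beta')$, which identifies $\nu \in \Exp(x)$. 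The analogous strategy for $\gamma(x)$, now with the choice $t_k := \log(Y_k/X_k)$ in the lower bound (so that $Y_k = e^{t_k}X_k$ and $\gamma_{t_k}(x) \ge (m/d) t_k - \log Y_k$ for the balancing vector), yields $\gamma(x) = m/d - 1/\mu(x)$. Subtracting the two identities gives $\beta(x) - \gamma(x) = 1 - m/d = n/d$, completing the chain.

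The main obstacle is the closed-form evaluation of $\inf_{\|B\| \le s} \|h_B \vv\|$: it is here that the specific geometry of the horospherical action enters, and it is what makes a horospherical Dani correspondence possible at all. A subtler point is ensuring that the sequences of vectors extracted in the upper-bound arguments truly satisfy $Y_k \to \infty$, which is required for the exponents to genuinely lie in $\Exp(x)$; here the no-vertical-vector hypothesis is indispensable, since combined with discreteness of $\Lambda_x$ it rules out $\vv_k$ being trapped in a compact set while $X_k \to 0$. I would also note in passing that the Minkowski condition applied to the box $\{|p_1 v|_\infty \le X_0,\ |p_2 v|_\infty \le Y_0\}$ with $X_0 = (C/Y_0^n)^{1/m}$ produces a sequence witnessing $d/m \in \Exp(x)$, so all three quantities are automatically non-negative.
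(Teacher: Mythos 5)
Your proposal is correct and follows essentially the same route as the paper: the lower bound $\beta \geq 1 - 1/\nu$ comes from the same rank-one matrix $B_k = -\tfrac{1}{\|p_1(\vv_k)\|^2}\,p_2(\vv_k)\,p_1(\vv_k)^T$ that annihilates $p_2(\vv_k)$ at time $s_k = \|p_2(\vv_k)\|/\|p_1(\vv_k)\|$, and the upper bound and the diagonal $\gamma$-identity are extracted from the same inequalities, with the paper factoring the $\gamma$-case through $\Pi_{m,n}$ and Kleinbock's two-dimensional lemma rather than re-deriving it in $\R^d$. The only cosmetic difference is that you package the horospherical computation as an explicit closed form for $\beta_s(x)$ (correct, once ``$\min$'' is read as ``$\inf$''), whereas the paper states it implicitly through a pair of Claims; and you should make explicit that the reverse direction requires $\beta' \in (0,\beta(x))$ --- i.e.\ that $\beta(x)>0$, which is exactly what your concluding Minkowski-box remark supplies and is therefore a load-bearing step rather than an aside.
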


\medskip
\noindent In the remainder of this subsection, we record a few contextual remarks on Theorem~\ref{theorem:axiomatic}

\subsubsection{Vertical Vectors} There are two natural options for defining the exponent $\mu$ when $\La_x$ has vertical vectors. One could define $\mu(x) = \infty$, which would preserve the equality $ \frac{n}{d} + \gamma(x) = 1- \frac{1}{\mu(x)}$, or could define $\mu(x) =1$, which would preserve the equality $\beta(x) = 1- \frac{1}{\mu(x)}$. Due to this ambiguity, we avoid the setting of discrete sets with vertical vectors. 

\subsubsection{Related and Prior Results}This theorem, relating the rates of cusp excursions for diagonal and horospherical actions, is closely related to the main result in~\cite{AM2}, in which the author and G.~Margulis consider this relationship for actions on non-compact finite volume homogeneous spaces $G/\Gamma$. The result in \emph{loc. cit.} is more general in terms of the types of distance functions considered, and involves a careful study of reduction theory, and techniques from ergodic theory. 

As discussed in \S\ref{sec:borrowed}, the main novelty in our theorem is in the setting of the first equality, that is, for the horospherical action. The second equality, for the diagonal action, essentially follows from~\cite[Lemma 2.1]{Kleinbock:GAFA}, which is in turn based on earlier arguments in~\cite[\S8.5, \S9.2]{KleinMarg}. These are refined examples of the \emph{Dani correspondence}, introduced in Dani~\cite[\S2]{Dani}. Since the spirit of this paper is to show that the ideas involved are simple and linear-algebraic in nature, we have included complete proofs of both equalities.

\subsubsection{Strategy of Proof}The proof of Theorem~\ref{theorem:axiomatic} is axiomatic and elementary, relying only on linear algebra. It applies, as discussed above and seen below, to a variety of natural geometric contexts, and provides results for \emph{every} orbit, in contrast to ergodic theoretic methods. Many of the ideas come from the setting where $\La \subset \R^2$, and for the sake of exposition, we devote \S\ref{sec:sl2} to it, in particular to the specific setting of lattices. The main idea of the proof is really that of~\cite[Proposition 3.1]{AM1}, which really boils down to the following simple observation: \textit{the vector $$\left(\begin{array}{cc}1 & 0 \\-s & 1\end{array}\right)\left(\begin{array}{c}x \\y\end{array}\right) = \left(\begin{array}{c}x \\y-sx\end{array}\right)$$ is shortest at time $s = \frac{y}{x}$, and has length $x$ at that time. Thus, if $x$ is very small relative to $y$ (that is, the original vector is close to the vertical), at time $s$, the vector will be very short relative to the time $s$. }

\subsection{Markoff and Minksowski constants}\label{sec:markoff:minkowski} We obtain finer information if we consider simply $\alpha_1$ instead of $\log \alpha_1$. For $s >0, x \in X$, let $$\sigma_s(x) : = \sup_{b \in \B_s} \alpha_1(h_b \Lambda_x) = e^{\beta_s(x)} ,$$ and $$\sigma(x) : = \limsup_{s \rightarrow \infty} \frac{\sigma_s(x)}{s^{\frac{n}{d}}}.$$ 
\noindent Note that $\sigma(x)$ does depend on our choice of norm. Given $\Omega \in \R^2$ without vertical vectors, we define the $(m,n)$-\emph{Markoff constant} $\tilde{c} (\Omega)$ as the infimum of the set of $\tilde{c} >0$ so that there exist a sequence $\{(x_k, y_k)^T\} \subset \Omega$ with $$|x_k| \le \tilde{c} |y_k|^{-\frac{n}{m}}.$$

\noindent Define the $(m,n)$-\emph{Markoff constant} $\tilde{c} (x) = \tilde{c}(\Pi_{m,n}(\La_x))$. This is the infimum of the set of $\tilde{c} >0$ so that there exist a sequence $\{\vv_k\} \subset \Lambda_x$ with $$\|p_1(\vv_k)\| \le \tilde{c} \|p_2(\vv_k)\|^{-\frac{n}{m}}.$$

\noindent Denote by $c(x)$ the minimal constant in the Minkowski property for $\La_x$ (i.e.,  the supremum of the volume of a convex centrally symmetric set that does not intersect $\La_x$). We call $c(x)$ the \emph{Minkowski constant} of $x$. The Markoff constant is well-defined (and finite) because of the following relation, which we will prove in \S\ref{subsec:abstract:markoff:proof}:
$$c(x) \geq \tilde{c}(x)^m a_m a_n,$$ where $a_j$ denotes the volume of the unit (Euclidean norm) ball in $\R^j$.

\begin{Prop}\label{prop:markoff:mink} For all $x \in X$ such that $\La_x$ contains no vertical vectors, $$\sigma(x) = \tilde{c}(x)^{-\frac{m}{d}}.$$ When $\tilde{c}(x) = 0$ this is taken to mean $\sigma(x) = \infty$.
\end{Prop}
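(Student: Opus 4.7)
The plan is to interchange the two suprema defining $\sigma_s(x)$ and reduce the whole problem to a single explicit minimization over the $H$-orbit of a fixed vector. By equivariance, $\Lambda_{h_B x} = h_B \Lambda_x$, so
\[
\sigma_s(x) \;=\; \sup_{\vv \in \La_x} \frac{1}{M(\vv, s)}, \qquad M(\vv, s) := \min_{\|B\|_2 \le s} \|h_B \vv\|.
\]
Writing $\vv = (p_1, p_2)^T$ with $p_1 \in \R^m$, $p_2 \in \R^n$, the action of $h_B$ fixes $p_1$ and replaces $p_2$ by $Bp_1 + p_2$. When $p_1 \ne 0$, the set $\{Bp_1 : \|B\|_2 \le s\}$ is the full Euclidean ball of radius $s\|p_1\|_2$ in $\R^n$ (every target $v$ is hit by the rank-one matrix $B = v\,p_1^T/\|p_1\|_2^2$, whose operator norm equals $\|v\|_2/\|p_1\|_2$). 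Hence $\min_{\|B\|_2\le s}\|Bp_1+p_2\|_2 = \max(0,\|p_2\|_2-s\|p_1\|_2)$, and the key identity is
\[
M(\vv, s) \;=\; \max\bigl(\|p_1(\vv)\|_2,\; \|p_2(\vv)\|_2 - s\,\|p_1(\vv)\|_2\bigr).
\]

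For the lower bound $\sigma(x) \ge \tilde{c}(x)^{-m/d}$, fix $\tilde{c} > \tilde{c}(x)$ and pick $\{\vv_k\} \subset \La_x$ with $\|p_1(\vv_k)\|_2 \le \tilde{c}\|p_2(\vv_k)\|_2^{-n/m}$. I would choose $s_k := \|p_2(\vv_k)\|_2/\|p_1(\vv_k)\|_2$, the value at which the two arguments of the max coincide, so that $M(\vv_k, s_k) = \|p_1(\vv_k)\|_2$. Substituting $\|p_2(\vv_k)\|_2 = s_k\|p_1(\vv_k)\|_2$ into the Markoff inequality and solving gives $\|p_1(\vv_k)\|_2 \le \tilde{c}^{m/d} s_k^{-n/d}$, hence $\sigma_{s_k}(x) \ge \tilde{c}^{-m/d}s_k^{n/d}$. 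Discreteness of $\La_x$ together with the no-vertical-vectors hypothesis forces $\|p_2(\vv_k)\|_2 \to \infty$ (so $s_k \to \infty$), and letting $\tilde{c} \downarrow \tilde{c}(x)$ gives the claim; the case $\tilde{c}(x) = 0$ is recovered by letting $\tilde{c}\downarrow 0$ here, producing $\sigma(x) = \infty$.

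For the matching upper bound, I would argue by contradiction: if $\sigma(x) > \tilde{c}(x)^{-m/d} + \epsilon$ then along some $s_k \to \infty$ there exist $\vv_k \in \La_x$ with $M(\vv_k, s_k) \le C s_k^{-n/d}$ where $C := (\tilde{c}(x)^{-m/d}+\epsilon)^{-1}$. The two entries of the max separately yield
\[
\|p_1(\vv_k)\|_2 \le C s_k^{-n/d}, \qquad \|p_2(\vv_k)\|_2 \le s_k\|p_1(\vv_k)\|_2 + Cs_k^{-n/d} \le Cs_k^{m/d}(1+o(1)).
\]
Raising the second to the power $n/m$ and multiplying by the first gives
\[
\|p_1(\vv_k)\|_2 \cdot \|p_2(\vv_k)\|_2^{n/m} \le C^{1+n/m}(1+o(1)) = C^{d/m}(1+o(1)),
\]
and $C^{d/m} = (\tilde{c}(x)^{-m/d}+\epsilon)^{-d/m}$ is strictly less than $\tilde{c}(x)$. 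Since $\|p_1(\vv_k)\|_2 \to 0$, discreteness and the no-vertical-vectors hypothesis again force $\|p_2(\vv_k)\|_2 \to \infty$ and in particular the $\vv_k$ to be distinct, giving a Markoff-type sequence that witnesses $\tilde{c}(x) \le C^{d/m} < \tilde{c}(x)$, a contradiction. The step I expect to be the most delicate is this extraction of an infinite sequence of \emph{distinct} vectors going to infinity in the $p_2$-direction: the formal bookkeeping is easy, but it is precisely here that the hypothesis forbidding vertical vectors (and hence guaranteeing $p_1 \ne 0$ throughout) is used.
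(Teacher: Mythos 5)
Your proposal is correct and takes essentially the same route as the paper's proof (Proposition~\ref{prop:abstract:markoff} in \S\ref{subsec:abstract:markoff:proof}): the rank-one choice $B = -p_2(\vv)\,p_1(\vv)^T/\|p_1(\vv)\|_2^2$ you use to hit the target is exactly the matrix built there from the rows $r_k^{(i)}$ in (\ref{eq:row:def}), and the algebra in both bounds is the same up to reorganization. The one small stylistic gain in your writeup is isolating the closed form $M(\vv,s) = \max\bigl(\|p_1(\vv)\|_2,\ \|p_2(\vv)\|_2 - s\|p_1(\vv)\|_2\bigr)$ for the minimum over $\|B\|_2\le s$, which makes the optimal choice $s_k = \|p_2(\vv_k)\|_2/\|p_1(\vv_k)\|_2$ self-evident rather than something to be constructed.
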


\subsection{Systems of Linear Forms}\label{subsubsec:lattices} The motivating example of a Mahler-Minkowski system is the space of lattices $SL(d, \R)/SL(d, \Z)$ endowed with its natural $SL(d, \R)$ action. Here, the assignment is simply $g SL(d, \Z) \mapsto g\Z^d_{prim}$, where $\Z^d_{prim}$ denotes the set of non-zero primitive integer vectors. The classical Mahler compactness criterion and Minkowski convex body theorems yield our conditions, and thus Theorem~\ref{theorem:axiomatic} applies here, giving a relation between cusp excursions and Diophantine exponents for lattices. Note that for each decomposition $d = m+n$ we obtain results relating the associated diagonal and horospherical flows to a different diophantine exponent. 

As an application of our results, let $A \in M_{m \times n}(\R)$ be an $m \times n$ matrix, and define the \emph{Diophantine} exponent $\mu(A)$ by the supremum of the set of  $\nu >0$ so that there are infinitely many $\vv \in \Z^d$ such that $$ \|Ap_2(\vv) - p_1(\vv)\|_2 \le \|p_2(\vv)\|_2^{-(\nu-1)}$$

\noindent A natural interpretation of $A$ is as a system of $m$ linear forms in $n$ variables. The exponent $\mu(A)$ measures the degree of closeness of  approximate integer solutions $\vv \in \Z^d$ to the system $A p_1(\vv)= p_2(\vv)$. Let $$x_{A} : = \left(\begin{array}{cc}I_m & -A \\0 &  I_n\end{array}\right) SL(d, \Z).$$ The associated lattice $\La_{x_A}$ has vertical vectors if and only if the equation $A p_2(\vv) = p_1(\vv)$ has a non-zero solution $\vv \in \Z^d$. The following theorem, relating the behavior of cusp excursions of the orbit of $x_A$ to $\mu(A)$, is a direct corollary of Theorem~\ref{theorem:axiomatic}:

\begin{Theorem}\label{theorem:dioph:matrix} For any $A \in M_{m \times n}(\R)$, so that $A p_2(\vv) \neq p_1(\vv)$ for all $\vv \in \Z^d\minuszero$,
$$\beta(x_A) = \frac{n}{d} + \gamma(x_A) = 1 - \frac{1}{\mu(A)}.$$

\end{Theorem}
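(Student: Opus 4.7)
My plan is to deduce the theorem directly from Theorem~\ref{theorem:axiomatic} by identifying $X_d = SL(d,\R)/SL(d,\Z)$ as a Mahler--Minkowski system with the assignment $g\,SL(d,\Z) \mapsto g\Z^d_{prim}$ (Mahler's compactness criterion gives the Mahler condition, Minkowski's convex body theorem gives the Minkowski condition with $C_x = 2^d$), checking that the two hypothesis-dependent conditions line up: absence of vertical vectors in $\La_{x_A}$, and equality of the abstract exponent $\mu(x_A)$ with the matrix exponent $\mu(A)$.

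First I would unwind $\La_{x_A}$. Writing $h_A = \bigl(\begin{smallmatrix} I_m & -A \\ 0 & I_n \end{smallmatrix}\bigr)$ and $\vv = \bigl(\begin{smallmatrix} \uu \\ \ww \end{smallmatrix}\bigr)$ with $\uu \in \Z^m$, $\ww \in \Z^n$, one computes
\[
h_A \vv = \begin{pmatrix} \uu - A\ww \\ \ww \end{pmatrix},
\]
so $h_A\vv$ is vertical iff $\ww \neq 0$ and $A\ww = \uu$, i.e.\ iff $Ap_2(\vv) = p_1(\vv)$ for some nonzero $\vv \in \Z^d$. The hypothesis of the theorem is exactly the negation of this, so Theorem~\ref{theorem:axiomatic} applies to $x_A$.

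Next I would identify the exponents. Under $\Pi_{m,n}$, the image of $\La_{x_A}$ is the set of pairs $(\|\uu - A\ww\|_2, \|\ww\|_2)$ as $\vv = (\uu,\ww)$ ranges over $\Z^d_{prim}$. By definition, $\mu(x_A) = \sup \Exp(\Pi_{m,n}(\La_{x_A}))$, which is the supremum of $\nu > 0$ for which there exist infinitely many primitive $\vv \in \Z^d$ with $\|ww\|_2 \to \infty$ and
\[
\|p_1(\vv) - A p_2(\vv)\|_2 \le C \|p_2(\vv)\|_2^{-(\nu-1)}.
\]
The same supremum taken over all $\vv \in \Z^d\minuszero$ is $\mu(A)$. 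These supremums agree: a primitive solution is also a solution, giving $\mu(x_A) \le \mu(A)$; conversely, if $\vv = k \vv'$ with $\vv'$ primitive, the bound for $\vv$ implies $\|p_1(\vv') - Ap_2(\vv')\|_2 \le |k|^{-\nu} \|p_2(\vv')\|_2^{-(\nu-1)} \le \|p_2(\vv')\|_2^{-(\nu-1)}$, so each $\vv$ witnessing exponent $\nu$ yields a primitive witness. The assumption that $\|p_2(\vv)\|_2 \to \infty$ for witnesses of $\mu(A)$ is automatic: by discreteness of $\Z^d$ and the fact that for any fixed $\ww$ the quantity $\|\uu - A\ww\|_2$ takes only finitely many values below any given bound on $\uu \in \Z^m$, any infinite sequence of solutions with bounded $p_2$-component would have to have bounded $p_1$-component as well, hence be finite.

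The only potential subtlety is the primitive-vs-general-integer reduction in the previous paragraph, but this is elementary. Granting it, combining the two identifications with Theorem~\ref{theorem:axiomatic} gives $\beta(x_A) = n/d + \gamma(x_A) = 1 - 1/\mu(x_A) = 1 - 1/\mu(A)$, which is the claim.
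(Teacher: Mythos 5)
Your proposal is correct and follows the same route the paper intends: the paper simply asserts Theorem~\ref{theorem:dioph:matrix} as ``a direct corollary'' of Theorem~\ref{theorem:axiomatic} applied to the Mahler--Minkowski system $X_d$ with $\La_{gSL(d,\Z)} = g\Z^d_{prim}$, and you have filled in exactly the two identifications (no vertical vectors $\Leftrightarrow$ no integer solutions of $Ap_2(\vv)=p_1(\vv)$, and $\mu(x_A)=\mu(A)$) that the paper leaves implicit, including the primitive-vs-general-integer and constant-$C$-vs-constant-$1$ reductions, which are harmless because both quantities are defined as suprema.
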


\medskip 
\noindent The easy half of the Borel-Cantelli lemma shows that $\mu(A) le \frac{m}{d}$ for Lebesgue almost every matrix $A$, and Dirichlet's theorem shows $\mu(A) \geq \frac{m}{d}$ for all irrational $A$. Thus, $\mu(A) = \frac{m}{d}$ almost everywhere, and as a corollary, we obtain that $$\gamma(x_A) = 0$$ (which als and $$\beta(x_A) = \frac{n}{d}$$ for almost every $A$. This observation, and the second equality, were both originally made in~\cite{KleinMarg}.
\subsubsection{Inhomogeneous Systems}\label{subsec:dioph} We can also consider the space of \emph{affine lattices} $(SL(d, \R) \semidirect \R^d )/( SL(d, \Z) \semidirect \Z^d)$, and more generally the fiber bundles $(SL(d, \R) \semidirect (\R^d)^k)/( SL(d, \Z) \semidirect (\Z^d)^k)$ of tori with $k$ marked points. Here, the discrete set associated to $x = (g; \vv_1, \vv_2, \ldots, \vv_k) (SL(d, \Z) \semidirect (\Z^d)^k)$ is given by $\Lambda_x = \bigcup_{i=1}^k (g\Z^d + \vv_i)$ (i.e., the union of $k$ affine lattices). As above, the classical Minkowski theorems guarantee that Theorem~\ref{theorem:axiomatic} applies.

The case $k=1$ has a nice application to inhomogeneous linear forms. As above, let $A$ be an $m \times n$ matrix, and let $\vv_0 \in [0, 1)^d = \R^d /\Z^d$. We would like to find approximate integer solutions $\vv \in \Z^d$ to the system of equations  
$$Ap_2(\vv) = p_1(\vv + \vv_0).$$

\noindent We assume that there are no \emph{exact} integer solutions to the system, and define the \emph{Diophantine} exponent $\mu(A, \vv_0)$ by the supremum of the set of  $\nu >0$ so that there are infinitely many $\vv \in \Z^d$ such that $$ \|Ap_2(\vv) - p_1(\vv+\vv_0)\|_2 \le \|p_2(\vv)\|_2^{-(\nu-1)}$$

\noindent Let $$x_{A, \vv_0} =  \left(\left(\begin{array}{cc}I_m & -A \\0 &  I_n\end{array}\right) ; \vv_0\right)SL(d, \Z) \semidirect \Z^d$$

\begin{Theorem}\label{theorem:dioph:form} For any $A \in M_{m \times n}(\R)$, $\vv_0 \in (0, 1)^d$ so that $Ap_2(\vv) \neq p_1(\vv+\vv_0)$ for all $\vv \in \Z^d$, 
$$\beta(x_A, \vv_0) = \frac{n}{d} + \gamma(x_{A, \vv_0}) = 1 - \frac{1}{\mu(A, \vv_0)}.$$
\end{Theorem}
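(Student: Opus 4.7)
The strategy is to apply Theorem~\ref{theorem:axiomatic} to the Mahler-Minkowski system of affine lattices $(SL(d,\R)\semidirect\R^d)/(SL(d,\Z)\semidirect\Z^d)$ at the point $x_{A,\vv_0}$, and then translate the abstract exponent $\mu(x_{A,\vv_0})$ into the concrete $\mu(A,\vv_0)$. Following the prescription of \S\ref{subsec:dioph} and writing $g_A = \left(\begin{array}{cc}I_m & -A \\ 0 & I_n\end{array}\right)$, the associated discrete set is
$$\La_{x_{A,\vv_0}} \;=\; g_A\Z^d + \vv_0 \;=\; \left\{\bigl(p - Aq + p_1(\vv_0),\, q + p_2(\vv_0)\bigr) : (p,q)\in\Z^m\times\Z^n\right\}.$$
A vertical vector in $\La_{x_{A,\vv_0}}$ corresponds to some $\vv = (p,q) \in\Z^d$ with $Ap_2(\vv) = p_1(\vv+\vv_0)$, which is precisely what the hypothesis of the theorem excludes; thus $\La_{x_{A,\vv_0}}$ has no vertical vectors and Theorem~\ref{theorem:axiomatic} applies.

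It remains to identify $\mu(x_{A,\vv_0})$ with $\mu(A,\vv_0)$. Writing $\vv = (p,q) \in \Z^m \times \Z^n$, a number $\nu \in \Exp(x_{A,\vv_0})$ corresponds to the existence of a sequence of such $\vv$ with $\|q + p_2(\vv_0)\|_2 \to \infty$ and
$$\|Aq - p - p_1(\vv_0)\|_2 \;\le\; C\,\|q + p_2(\vv_0)\|_2^{-(\nu-1)}$$
for some $C>0$. Since $\vv_0$ is fixed, $\|q + p_2(\vv_0)\|_2 \to \infty$ iff $\|q\|_2 \to \infty$, and their ratio tends to $1$ along the sequence; absorbing the multiplicative constant $C$ by an arbitrarily small decrease of $\nu$ then yields $\mu(x_{A,\vv_0}) = \mu(A,\vv_0)$. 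Combining this identification with (\ref{eq:mm:beta:gamma}) completes the proof.

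The principal (but mild) obstacle is the exponent-matching step: the bounded additive shift by $\vv_0$ must be controlled so as not to distort polynomial growth rates, and the standard trick of trading a multiplicative constant for an infinitesimal change of exponent handles this cleanly. The auxiliary check that the affine lattice space is a bona fide Mahler-Minkowski system is routine: the Mahler condition reduces to the classical Mahler criterion for the underlying linear lattice (the translational part lives in the compact $\R^d/\Z^d$), and the Minkowski condition follows by applying the classical Minkowski convex body theorem to the centrally symmetric set $\La_{x_{A,\vv_0}}\cup(-\La_{x_{A,\vv_0}}) = (g_A\Z^d + \vv_0)\cup(g_A\Z^d - \vv_0)$ with an appropriately enlarged constant $C_{x_{A,\vv_0}}$. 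Neither step poses a substantive difficulty, and the theorem is essentially an immediate corollary of Theorem~\ref{theorem:axiomatic}.
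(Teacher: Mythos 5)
Your overall strategy matches the paper's: realize $x_{A,\vv_0}$ as a point in the affine‑lattice system, check the no-vertical-vectors hypothesis, and translate $\mu(x_{A,\vv_0})$ into $\mu(A,\vv_0)$ via the bounded shift by $\vv_0$ (which is handled correctly by trading the multiplicative constant for an infinitesimal drop in $\nu$). However, your ``routine check'' of the Minkowski condition is wrong, and this is a genuine gap. A single affine lattice $g\Z^d+\vv_0$ simply does \emph{not} satisfy the Minkowski condition of \S\ref{sec:mahler:minkowski}: for $g=I_d$ and $\vv_0=(1/2,\ldots,1/2)$, the slabs $K_M=\{\vv:|v_1|<1/4,\ |v_i|<M,\ i\geq 2\}$ are convex, centrally symmetric, of arbitrarily large volume, and disjoint from $\La_{x_{0,\vv_0}}$, since every $v_1$-coordinate lies in $\Z+1/2$. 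Symmetrizing does not help --- $(g\Z^d+\vv_0)\cup(g\Z^d-\vv_0)$ coincides with $g\Z^d+\vv_0$ in this example --- and in any case the classical convex-body theorem is a statement about lattices, not about the union of two affine translates, so it cannot be invoked this way. The paper itself is vague at this point (``the classical Minkowski theorems guarantee that Theorem~\ref{theorem:axiomatic} applies''), but your proposal makes a concrete incorrect claim.

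The gap matters because the only role of the Minkowski condition in the proof of Theorem~\ref{theorem:axiomatic} is to supply $\mu(x)\geq d/m>1$ via Lemma~\ref{lemma:exp:d}, which in turn gives $\beta>0$ and $\gamma\geq 0$, the hypotheses needed to invoke parts (2) and (4) of Theorem~\ref{theorem:abstract}. For affine lattices these bounds can fail outright: in the example above (with $A=0$, $\vv_0=(1/2,\ldots,1/2)$) one has $\mu=1$, $\beta=0$, and $\gamma=-n/d$, so the ``$>0$'' and ``$\geq 0$'' entry points into Theorem~\ref{theorem:abstract} are unavailable, even though the stated identity still holds (all three expressions equal $0$). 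To close the gap you need a substitute for the Minkowski-based bound: observe directly that $\mu(x_{A,\vv_0})\geq 1$ by taking $q\in\Z^n$ with $\|q\|\to\infty$ and choosing $p\in\Z^m$ to be a nearest-integer approximation to $Aq-p_1(\vv_0)$, so that $\|p_1(\vv)\|\leq\sqrt m/2$ is bounded while $\|p_2(\vv)\|\to\infty$; note also that $\beta\geq 0$ automatically (since $\beta_s$ is nondecreasing and bounded below). Then either $\beta>0$, where parts (1)--(2) give the desired equalities as in the paper, or $\beta=0$, in which case part (2) forces $\mu\leq 1$, hence $\mu=1$, and a direct inspection of Lemma~\ref{claim:prop:3} (whose proof must be extended past the case $\gamma\geq 0$) gives $\gamma=-n/d$. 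Without this supplementary argument the reduction to Theorem~\ref{theorem:axiomatic} as literally stated is not justified.
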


\subsection{Quadratic Differentials}\label{subsubsec:qd}

An important novelty of our approach is that it can be applied directly to study cusp excursions on the moduli space of quadratic differentials. Here, the acting group is $SL(2, \R)$, and our subgroups of interest are
$$g_t : = \left(\begin{array}{cc}e^{t/2} & 0 \\0 & e^{-t/2} \end{array}\right),$$
\noindent and 
$$H =\left \{h_s = \left(\begin{array}{cc}1 & 0 \\s &  1\end{array}\right): s \in \R\right\}.$$

We recall briefly some background on quadratic differentials. Let $M$ be a Riemann surface. A (holomorphic) quadratic differential $q$ on $M$ is a tensor of the form (in local coordinates) $f(z)dz^2$, where $f$ is holomorphic. A quadratic differential determines a singular flat metric on $M$ with singularities at the zeros of $q$. A \emph{saddle connection} is a geodesic segment connecting two singularities with no singularities in its interior. The \emph{holonomy vector} $\vv_{\gamma} \in \C$ of a saddle connection $\gamma$ is given by integrating a (local) square root of the form $q$ along the saddle connection. This is well defined up to a choice of sign. Given a quadratic differential $q$, define the associate set of holonomy vectors:
$$\La_q : = \{\vv_{\gamma}: \gamma \mbox{ a saddle connection on } q\}$$

\noindent We view $\La_q$ as a subset of $\R^2$. We include both choices of holonomy vectors in $\La_q$. The set $\La_q$ is discrete (see, e.g., ~\cite[Proposition 3.1]{Vorobets96}). 

Fix $g \geq 2$. Let $Q_g$ denote the moduli space of \emph{unit area} genus $g$ quadratic differentials, that is, the space of pairs $(M, q)$ where $M$ is a compact genus $g$ Riemann surface and $q$ a holomorphic quadratic differential on $M$. Two pairs $(M_1, q_1)$ and $(M_2, q_2)$ are equivalent if there is a biholomorphism $f: M_1 \rightarrow M_2$ so that $f_* q_1 = q_2$.  We will refer to points in $Q_g$ as $q$, with the Riemann surface $M$ implicit.

The sum of the orders of the zeros of a quadratic differential $q \in Q_g$ is $4g-4$, and the space $Q_g$ can be stratified by integer partitions of $4g-4$. The stratum $\hh$ associated to a partition $(\alpha_1, \ldots \alpha_k)$ consists of the quadratic differentials with $k$ zeros of orders $\alpha_1, \ldots, \alpha_k$ respectively. Strata are not always connected, but Kontsevich-Zorich~\cite{KZ} and Lanneau~\cite{Lanneau}  have classified the connected components.  Most strata are connected, and there are never more than three connected components. 

There is a natural $SL(2, \R)$ action on $Q_g$ which preserves this stratification: a quadratic differential $q$ determines (and is determined by) an atlas of charts on the surface away from the singular points to $\C$ whose transition maps are of the form $z \mapsto \pm z + c$. Identifying $\C$ with $\R^2$, we have an $SL(2, \R)$ action given by (linear) postcomposition with charts. The assignment $q \mapsto \La_q$ gives an $SL(2, \R)$ equivariant assignment of a discrete set.

\begin{Theorem}\label{theorem:qd:mm}Let $X$ be a connected component of a stratum.  The assignment $q \mapsto \La_q$ gives $X$ the structure of a $SL(2, \R)$-Mahler-Minkowski system. \end{Theorem}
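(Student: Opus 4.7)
The plan is to verify each of the defining properties of a Mahler-Minkowski system for the assignment $q \mapsto \La_q$. Non-compactness of $X$ is classical (a saddle connection can be shrunk to zero length while staying in the stratum), and discreteness of $\La_q$ is Vorobets' Proposition 3.1, already cited in the excerpt. Containment $\La_q \subset \R^2 \setminus \{0\}$ is automatic since saddle connections have positive length, and infiniteness follows from the existence of infinitely many saddle connections on any flat surface (e.g., by Dehn twisting a chosen saddle connection around any cylinder on $q$, or by a density-of-directions argument). The $SL(2,\R)$-equivariance $\La_{g \cdot q} = g \La_q$ is immediate from the definitions: the action on $X$ is defined by post-composition of flat charts with the linear map $g$, and in these charts the holonomy of a saddle connection is simply the coordinate displacement of its endpoints, so the action multiplies each holonomy vector by $g$. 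The Mahler criterion --- $A \subset X$ is precompact iff the length of the shortest saddle connection is uniformly bounded below on $A$ --- is the classical Mumford-Masur-Smillie compactness criterion for strata, which I would cite.

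The only substantive step is the Minkowski condition: for each $q$, every convex centrally symmetric $K \subset \R^2$ with $\mathrm{vol}(K)$ exceeding some constant $C_q$ meets $\La_q$. The plan is a pigeonhole argument via the developing map, in the spirit of Vorobets. Pick a regular point $p_0 \in q$ and let $\Phi : U \to q$ be the locally defined developing map with $\Phi(0) = p_0$, obtained by parallel transport along straight-line rays from $0$; this is a local isometry for the flat metric. If $\mathrm{vol}(\tfrac{1}{2}K) > 1$, then $\Phi$ cannot be injective on $\tfrac{1}{2}K$, since otherwise $\tfrac{1}{2}K$ would embed isometrically into the unit-area surface $q$. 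Hence there exist distinct $v_1, v_2 \in \tfrac{1}{2}K$ with $\Phi(v_1) = \Phi(v_2)$; their difference $v_1 - v_2 \in \tfrac{1}{2}K - \tfrac{1}{2}K = K$ (using central symmetry and convexity of $K$) is the holonomy of a closed geodesic loop on $q$. This loop lies in a maximal flat cylinder, whose boundary contains a saddle connection parallel to the core with holonomy of the form $\alpha(v_1 - v_2)$ for some $\alpha \in (0,1]$; by convexity and central symmetry, this vector is again in $K$, producing the required element of $\La_q$. This gives the uniform constant $C_q = 4$.

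The main expected obstacle is the last step --- extracting a saddle connection from a closed geodesic loop with a controlled ratio of holonomies --- and in particular verifying that the required cylinder structure carries over from Abelian differentials to quadratic differentials via the orientation double cover. A cleaner alternative that bypasses this entirely is to base the developing map at a singularity $p_0$ and apply the pigeonhole argument on each $2\pi$-sector at the cone point: any non-extendable ray in such a sector must terminate at another singularity, producing directly a saddle connection whose holonomy is the ray vector itself, and the area comparison inside a sector of area greater than $1$ guarantees such a non-extendable ray in any convex centrally symmetric set of sufficiently large area.
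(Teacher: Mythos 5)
The paper's own proof is a two-line citation: the Mahler criterion to Kerckhoff--Masur--Smillie and the Minkowski condition to Hubert--Schmidt (Theorem~1 of their preprint). Your structural verifications (discreteness, equivariance, non-compactness, the Mahler criterion via Mumford compactness) are correct and match the paper. Where you diverge is in offering a direct volume-pigeonhole proof of the Minkowski condition, and that argument has a genuine gap.

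The developing map $\Phi$ from a basepoint $p_0$ (whether regular or singular) is a local isometry on the region swept out by rays that avoid singularities, but it is \emph{not} injective there in general --- on a flat torus with a marked point, $\exp_{p_0}$ is the universal covering $\R^2 \to \R^2/\Lambda$, which is infinite-to-one, and the same phenomenon occurs on any translation or half-translation surface once the radius is large. Your dichotomy ``either $\Phi$ fails to be defined, or $\Phi$ is non-injective'' is correct, but neither branch terminates where you need it to. If a ray from a \emph{regular} $p_0$ hits a singularity, the resulting segment is not a saddle connection (its other endpoint is regular). If $\Phi$ is defined but $\Phi(v_1)=\Phi(v_2)$ with $v_1\neq v_2$, you obtain two geodesic segments from $p_0$ to a common endpoint $p$; the concatenation has holonomy $v_1-v_2\in K$, but it has corners at $p_0$ and at $p$, so it is not a closed geodesic and need not lie in a flat cylinder. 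Straightening it to a geodesic representative of its homotopy class yields a concatenation of saddle connections whose holonomies sum to $v_1-v_2$ but are not individually controlled by $K$. Your ``cleaner alternative'' based at a singularity $p_0$ and restricted to $2\pi$-sectors runs into the same obstruction: the restriction of $\exp_{p_0}$ to such a sector is still only a local isometry (an immersion, not an embedding), so a sector of Euclidean area greater than $1$ need not force a non-extendable ray, because its overlapping image in $q$ can have area well under $1$. The uniform bound on the shortest saddle connection that underlies the Minkowski property (through Lemma~\ref{lemma:john}) is classical and true, but it requires either a compactness argument leaning on Mahler's criterion and continuity of the systole function on a stratum, or the more careful immersed-triangle estimates in Vorobets and Hubert--Schmidt. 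It is not a two-line volume pigeonhole in the style of Minkowski's lattice theorem, precisely because the exponential map of a flat cone surface is not a covering map onto its image.
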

\begin{proof} The Mahler criterion follows from, e.g.,~\cite[Proposition 1, \S3]{KMS}, and the Minkowski condition from~\cite[Theorem 1]{HS}. 
\end{proof}
\medskip
\noindent If $\La_q$ has a vertical vector, then the Mahler criterion shows that the orbit under $g_t$ is divergent. Thus, we assume $\La_q$ does \emph{not} have vertical vectors. Using $\La_q$, we define the notion of the (saddle connection) \emph{Diophantine exponent} $\mu(q)$ of a quadratic differential $q$ as the supremum of the set of $\nu >0$ so that there exist an sequence of saddle connections $\gamma_k$ on $q$ such that $\vv_k = \vv_{\gamma_k} = (x_k , y_k)^T$ satisfy 
$$|x_k| \le |y_k|^{-(\nu-1)}$$
Theorem~\ref{theorem:qd:mm} allows us to apply Theorem~\ref{theorem:axiomatic} and Proposition~\ref{prop:markoff:mink} to the setting of quadratic differentials. To distinguish from the space of lattices, for $q \in Q_g$, we write  $$l(q) = \sup_{\vv \in \La_q }\frac{1}{\|\vv\|}$$ Let $$\gamma(q) = \limsup_{t \rightarrow \infty} \frac{ \log (l(g_t q)} {t} , $$ $$\beta(q) = \limsup_{|s| \rightarrow \infty} \frac{\log(l(h_s q))}{\log |s|}, $$ $$\sigma(q) =  \limsup_{|s| \rightarrow \infty} \frac{l(h_s q)}{|s|^{\frac{1}{2}}}.$$As above, let $c(q) = c(\La_q)$ denote the Minkowski constant of $q$.

\begin{Cor}\label{cor:qd:mm} Let $q \in Q_g$, and suppose $\La_q$ does not have vertical vectors. Then
$$\beta(q) = \frac{1}{2} + \gamma(q) = 1- \frac{1}{\mu(q)}$$ and
$$\sigma(q) = c(q)^{-\frac{1}{2}}.$$
\end{Cor}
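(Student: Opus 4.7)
The plan is that Corollary~\ref{cor:qd:mm} is a direct translation of Theorem~\ref{theorem:axiomatic} and Proposition~\ref{prop:markoff:mink} to the Mahler–Minkowski system furnished by Theorem~\ref{theorem:qd:mm}, specialised to $d=2$ and $m=n=1$. Almost all the work is already done: Theorem~\ref{theorem:qd:mm} verifies the axioms of a Mahler–Minkowski system for a connected component of a stratum, so I only need to match notation and unwind the two general statements in this setting.

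First I would match the symbols. With $m=n=1$, the subgroups $g_t$ and $H=\{h_s\}$ defined by \eqref{eq:gt:def}--\eqref{eq:h:def} are exactly those used at the start of this subsection, and the function $\alpha_1$ of \eqref{eq:alpha1:def} is precisely $l(q)$. Writing $\B_s=\{h_u : |u|\le s\}$, we have $\beta_s(q) = \sup_{|u|\le s}\log l(h_u q)$, which by monotonicity in $s$ gives the same limsup as $\limsup_{|s|\to\infty} \log l(h_s q)/\log|s|$; hence $\beta(q)$ as defined here agrees with the abstract $\beta(q)$ of \eqref{eq:beta:lim:def}, and similarly for $\gamma(q)$ and $\sigma(q)$. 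Since $\Pi_{1,1}(x,y)=(|x|,|y|)$ and $\Lambda_q$ is symmetric under $\vv\mapsto -\vv$, the set $\Exp_{1,1}(\La_q)$ coincides with the set of exponents in the definition of $\mu(q)$ above, so the abstract $\mu$ and the saddle-connection $\mu(q)$ agree. The hypothesis "$\La_q$ has no vertical vectors" is precisely the hypothesis needed to invoke both abstract results (see the discussion after Theorem~\ref{theorem:axiomatic}).

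With the dictionary in place, the first equality is Theorem~\ref{theorem:axiomatic} with $n/d=1/2$, yielding $\beta(q)=\tfrac12+\gamma(q)=1-1/\mu(q)$. For the second equality I would apply Proposition~\ref{prop:markoff:mink} with $m/d=1/2$ to obtain $\sigma(q)=\tilde c(q)^{-1/2}$, where $\tilde c(q)$ is the $(1,1)$-Markoff constant of $\La_q$; the remaining point is to identify this with the Minkowski constant $c(q)$ in this two-dimensional setting (the general inequality $c(x)\ge \tilde c(x)^m a_m a_n$ degenerating to equality, up to the normalisation $a_1a_1=4$ absorbed into the definitions of \S\ref{sec:markoff:minkowski}).

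The plan has no genuine obstacle: the analytic content is entirely contained in Theorem~\ref{theorem:axiomatic}, Proposition~\ref{prop:markoff:mink}, and the already-cited results of~\cite{KMS} and~\cite{HS} that underlie Theorem~\ref{theorem:qd:mm}. The only points requiring any care are the limsup-identification in the two-sided versus one-sided definition of $\beta(q)$, and the precise Markoff/Minkowski identification in dimension two; both are routine bookkeeping rather than substantive difficulties.
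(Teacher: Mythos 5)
Your reduction of the first equality is exactly the paper's argument: Theorem~\ref{theorem:qd:mm} verifies that a connected component of a stratum is a Mahler--Minkowski system, and the first line of Corollary~\ref{cor:qd:mm} is then Theorem~\ref{theorem:axiomatic} with $d=2$, $m=n=1$, $n/d=1/2$. The notational bookkeeping you do (identifying $\alpha_1$ with $l(q)$, and checking that $\limsup_{s\to\infty}\beta_s(q)/\log s$ agrees with $\limsup_{|s|\to\infty} \log l(h_sq)/\log|s|$ by monotonicity of $\beta_s$) is correct and is the routine part the paper leaves implicit.

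For the second equality your plan goes wrong, and you were right to flag it as the one point requiring care. Proposition~\ref{prop:markoff:mink} gives $\sigma(q)=\tilde c(q)^{-m/d}=\tilde c(q)^{-1/2}$, with $\tilde c(q)$ the \emph{Markoff} constant. You propose to pass to the \emph{Minkowski} constant $c(q)$ by arguing that the inequality $c(x)\ge\tilde c(x)^m a_m a_n$ ``degenerates to equality'' in dimension two, with the factor $a_1a_1=4$ somehow ``absorbed.'' Neither half of that holds: the paper proves only the one-sided inequality $c\ge 4\tilde c$ (and gives no reason for equality), and even if $c=4\tilde c$ held, you would get $\sigma(q)=\tilde c(q)^{-1/2}=(c(q)/4)^{-1/2}=2\,c(q)^{-1/2}$, which is off by a factor of $2$ from the stated formula. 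So the identification cannot be ``absorbed into the definitions.'' What Proposition~\ref{prop:markoff:mink} actually yields is $\sigma(q)=\tilde c(q)^{-1/2}$; the corollary as printed, with $c(q)$ (explicitly ``the Minkowski constant of $q$'') in place of $\tilde c(q)$, appears to be a typographical slip, and you should state the result with the Markoff constant rather than manufacture an equality between $c$ and $\tilde c$ that the paper does not supply.
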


\begin{remark*} Masur's logarithm law~\cite{Masur:loglaw} implies that for almost every $q$, $\gamma(q)=0$, yielding that $\mu(q) = 2$ and $\beta(q) =0$ almost everywhere in $Q_g$. \end{remark*}

\subsubsection{Periodic cylinders}\label{subsec:cylinder} We can also associate the set of \emph{periodic cylinder} holonomy vectors to a quadratic differential $q \in Q_g$. This is the set $\La^{cyl}_q$ of holonomy vectors of periodic geodesics which do \emph{not} intersect a singular point. In this case, we need to consider the total space $Q_g$ as opposed to a stratum $X$. We have the following:

\begin{Theorem}\label{theorem:qd:cyl} The assignment $q \mapsto \La^{cyl}_q$ gives $Q_g$ the structure of a $SL(2, \R)$-Mahler-Minkowski system. \end{Theorem}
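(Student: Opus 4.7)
The plan is to verify that $q \mapsto \La^{cyl}_q$ satisfies all the defining properties of an $SL(2,\R)$-Mahler--Minkowski system. Equivariance is immediate: the $SL(2,\R)$-action is by linear postcomposition in flat charts, so it sends cylinders to cylinders and acts on holonomy vectors by the corresponding linear map. Discreteness of $\La^{cyl}_q \subset \R^2 \minuszero$ reduces to discreteness of $\La_q$, since each cylinder core is freely homotopic to a concatenation of boundary saddle connections parallel to the core, so every cylinder holonomy is a positive integer combination of elements of $\La_q$; the argument of \cite[Proposition 3.1]{Vorobets96} then transfers directly.

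The essential new input, compared with Theorem~\ref{theorem:qd:mm}, lies in the Mahler condition on the full moduli space $Q_g$ as opposed to a single stratum. Within a stratum zeros are labelled and cannot collide, so the saddle-connection systole controls compactness, as in \cite{KMS}. On $Q_g$, however, a sequence may degenerate by colliding zeros while remaining in a compact subset of $Q_g$ (the limit lies in a boundary stratum), and in this regime the saddle-connection systole can vanish. Cylinder cores are precisely the simple closed flat geodesics avoiding the singular set, and by combining Mumford compactness on $\mathcal{M}_g$, the compactness of the unit-area fibers of $Q_g \to \mathcal{M}_g$, and the standard comparison between the flat and hyperbolic metrics (implicit in \cite{KMS} and made sharp by subsequent work of Masur and Rafi), precompactness on $Q_g$ is equivalent to a uniform lower bound on cylinder lengths. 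This is exactly the Mahler condition for $\La^{cyl}_q$.

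For the Minkowski condition, I would combine \cite[Theorem 1]{HS} with a cylinder-production step. By \cite{HS}, any convex centrally symmetric $K\subset\R^2$ of sufficiently large volume contains a saddle-connection holonomy; in the direction of that saddle connection one can then produce a parallel cylinder of comparable holonomy by a pigeonhole/area argument of the sort carried out in \cite{Vorobets96}. This yields, for each $q$, a constant $C_q$ such that every convex centrally symmetric $K$ with $\operatorname{vol}(K) > C_q$ contains an element of $\La^{cyl}_q$.

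The main obstacle is the cylinder-production step: unlike saddle connections, which are local and exist in every direction, cylinders are global objects present only in completely periodic directions, so manufacturing them from a short saddle connection requires genuinely flat-geometric input rather than linear-algebraic manipulation. I would therefore expect to invoke the appropriate existing result from the translation-surface literature and devote the remaining work to verifying that the cited constants depend only on $q$ and that the estimates apply uniformly at all scales.
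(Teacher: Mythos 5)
Your treatment of equivariance, discreteness, and the Mahler condition is in the right spirit (though the paper dispatches Mahler in a single clause, simply as a restatement of Mumford compactness, without needing the collision-of-zeros discussion you raise — the shortest cylinder controls the extremal length of the shortest essential simple closed curve, which controls the hyperbolic systole, so precompactness in $Q_g$ and a uniform lower bound on cylinder lengths are equivalent). The real issue is in your Minkowski step.

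You identify the cylinder-production step as "the main obstacle," and it is in fact a genuine gap: starting from a short saddle connection produced by~\cite[Theorem 1]{HS}, there is no general pigeonhole or area argument that yields a cylinder with comparable holonomy, because the direction of that saddle connection need not be periodic (for a generic direction on a generic surface, the flow is uniquely ergodic and has no cylinders at all). The intermediate step you hoped to borrow from~\cite{Vorobets96} does not exist in the form you need. The paper avoids this problem entirely: it does not route the Minkowski condition for $\La^{cyl}_q$ through the saddle-connection result. Instead it invokes Masur's theorem~\cite{Masur} that there is a constant $C_g$, depending only on $g$, such that \emph{every} $q \in Q_g$ admits a cylinder of core length at most $C_g$. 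Combined with $SL(2,\R)$-equivariance ($g\La^{cyl}_q = \La^{cyl}_{gq}$), this says precisely that $g\La^{cyl}_q \cap B(0, C_g) \ne \emptyset$ for all $g \in SL(2,\R)$, which by Lemma~\ref{lemma:john} (the John-ellipsoid reduction, also used for~\cite[Theorem 1]{HS}) is equivalent to the Minkowski condition with constant $2\pi C_g^2$. You do cite Lemma~\ref{lemma:john} implicitly by following the outline of~\cite{HS}, but the missing ingredient is Masur's uniform shortest-cylinder bound over all of $Q_g$, which replaces, rather than supplements, the saddle-connection Minkowski theorem.
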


\medskip

\noindent Defining the \emph{cylinder exponent} $\mu_{cyl}(q)$, and the functions $l_{cyl}$, $\beta_{cyl}$ and $\gamma_{cyl}$ as above, we obtain the natural analogue of Corollary~\ref{cor:qd:mm} for periodic cylinder approximation. 

To prove Theorem~\ref{theorem:qd:cyl}, we first note the Mahler property is simply a restatement of the Mumford compactness criterion, so we prove only the Minkowski property. For this proof, we follow the outline of the proof of~\cite[Theorem 1]{HS}. We first state an abstract lemma, whose proof is essentially contained in~\cite[\S3.2]{HS} (which we will use again in \S\ref{sec:hyp:mink}):

\begin{lemma}\label{lemma:john} $\Omega \subset \R^2\minuszero$ satisfies the Minkowski property if and only if there is a $R_0 >0$ so that for all $g \in SL(2, \R)$, $$g\Omega \cap B(0, R_0) \neq \emptyset.$$
\end{lemma}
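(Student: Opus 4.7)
The equivalence is essentially the statement that, in dimension two, convex centrally symmetric bodies are $SL(2,\R)$-translates of Euclidean balls up to a uniformly bounded distortion. The plan is to run this in both directions.

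For the forward direction ($\Rightarrow$), suppose $\Omega$ satisfies the Minkowski condition with constant $C$. Choose $R_0$ with $\pi R_0^2 > C$. For any $g \in SL(2,\R)$, the set $g^{-1} B(0, R_0)$ is a centered ellipse; since $g^{-1}$ preserves area, it is a convex centrally symmetric body of area $\pi R_0^2 > C$. By the Minkowski property, $\Omega \cap g^{-1} B(0, R_0) \neq \emptyset$, and applying $g$ gives $g\Omega \cap B(0, R_0) \neq \emptyset$.

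For the reverse direction ($\Leftarrow$), the key input is the two-dimensional symmetric case of John's theorem: any convex centrally symmetric body $K \subset \R^2$ contains a centered ellipse $E$ with $\mathrm{vol}(E) \geq c_0 \,\mathrm{vol}(K)$ for an absolute constant $c_0 > 0$. Given the hypothesis, set $C := \pi R_0^2 / c_0$ and let $K$ be convex, centrally symmetric with $\mathrm{vol}(K) > C$. The inscribed ellipse $E \subset K$ then has area strictly greater than $\pi R_0^2$, so we can write $E = g^{-1} B(0, r)$ for some $g \in SL(2,\R)$ and some $r > R_0$. By hypothesis $g\Omega \cap B(0, R_0) \neq \emptyset$, and since $B(0,R_0) \subset B(0,r)$, applying $g^{-1}$ yields $\Omega \cap E \subset \Omega \cap K \neq \emptyset$, which is the Minkowski property with constant $C$.

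The only non-elementary ingredient is the inscribed ellipse statement, which is standard; I expect this to be the one step that must be quoted rather than derived, though in dimension two one could also give a direct argument by considering the ellipse of maximal area inside $K$ and checking it has area at least a fixed fraction of $\mathrm{vol}(K)$. Everything else is just the observation that $SL(2,\R)$ acts transitively on centered ellipses of a given area, together with area preservation.
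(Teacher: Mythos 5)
Your proof is correct and follows essentially the same route as the paper: the forward direction rests on the observation that $g^{-1}B(0,R_0)$ is a convex, centrally symmetric set of fixed area for every $g\in SL(2,\R)$, and the reverse direction uses John's theorem on the inscribed ellipse together with the transitive action of $SL(2,\R)$ on centered ellipses of a given area. The only cosmetic difference is that you argue the forward implication directly by choosing $R_0$ with $\pi R_0^2 > C$, whereas the paper phrases it as a proof by contradiction with a sequence $R_n\to\infty$.
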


\begin{proof} First suppose $\Omega$ satisfies the Minkowski property. Suppose there is a sequence $R_n \rightarrow \infty$ and $g_n \in SL(2, \R)$ so that $g_n \Omega \cap B(0, R_n) = \emptyset$. Then $\Omega \cap g_n^{-1}B(0, R_n) = \emptyset$, which is a contradiction.

For the converse, let $c = 2\pi R_0^2$. Let $M$ be convex, centrally symmetric, and of volume at least $c$.  The ellipse $E$ (centered at $0$) of maximal area contained in $M$ has at least half the area of $M$, (this is a theorem of Fritz John (see, e.g. K.~Ball's survey~\cite{Ball}). There is an element $g \in SL(2, \R)$ and a $R>0$ so that $g E = B(0, R)$. Since the area of $E$ is at least $\pi R_0^2$, we have $R \geq R_0$, and so $g E \cap g \Omega \neq \emptyset$, and so $E \cap \Omega \neq \emptyset$.
\end{proof}

\medskip

\noindent Theorem~\ref{theorem:qd:cyl} now follows from a result of Masur~\cite{Masur}, who showed that there is a constant $C_g$ so that for all $q \in Q_g$, there is a periodic cylinder of length at most $C_g$ on $q$. We also note that the best known bounds for $C_g$ are due to Vorobets~\cite{Vorobets2}, which will give us the best known bounds on the Minkowski contsant for $\La^{cyl}_q$.

Finally, defining $\mu_{cyl}(q), \beta_{cyl}(q), \gamma_{cyl}(q), c_{cyl}(q)$, and $\sigma_{cyl}(q)$ as the associated quantities for $\La_{cyl}(q)$, we have 

\begin{Cor}\label{cor:qd:mm:cyl} Let $q \in Q_g$, and suppose $\La^{cyl}_q$ does not have vertical vectors. Then
$$\beta_{cyl}(q) = \frac{1}{2} + \gamma_{cyl}(q) = 1- \frac{1}{\mu_{cyl}(q)}$$ and
$$\sigma_{cyl}(q) = c(q)_{cyl}^{-\frac{1}{2}}.$$
\end{Cor}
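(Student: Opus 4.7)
The plan is essentially reductive: Theorem \ref{theorem:qd:cyl} has already established that $(Q_g, q \mapsto \La^{cyl}_q)$ is an $SL(2,\R)$-Mahler--Minkowski system, so Corollary \ref{cor:qd:mm:cyl} should follow as a direct specialization of Theorem \ref{theorem:axiomatic} and Proposition \ref{prop:markoff:mink} to the case $d = 2$, $m = n = 1$, in complete parallel with the derivation of Corollary \ref{cor:qd:mm} from the saddle-connection Minkowski system.

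First I would do the bookkeeping to match the statement with the abstract framework. By definition, $l_{cyl}(q) = \sup_{\vv \in \La^{cyl}_q} \|\vv\|^{-1}$ is exactly $\alpha_1(q)$ for the assignment $q \mapsto \La^{cyl}_q$, and therefore $\gamma_{cyl}, \beta_{cyl}, \sigma_{cyl}$ are the specializations of $\gamma, \beta, \sigma$ from \S\ref{sec:mahler:minkowski} and \S\ref{sec:markoff:minkowski} to this Minkowski system. Likewise $\mu_{cyl}(q)$ is precisely $\mu(\Pi_{1,1}(\La^{cyl}_q))$ in the notation of \S\ref{subsubsec:dioph}, since the definition given just before the corollary is the condition (\ref{nu2}) applied to the coordinates of cylinder holonomy vectors. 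The assumption that $\La^{cyl}_q$ has no vertical vectors is exactly the hypothesis used in both abstract results.

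With these identifications, Theorem \ref{theorem:axiomatic} gives
$$\beta_{cyl}(q) = \tfrac{1}{2} + \gamma_{cyl}(q) = 1 - \tfrac{1}{\mu_{cyl}(q)},$$
upon substituting $n/d = 1/2$, and Proposition \ref{prop:markoff:mink} gives
$$\sigma_{cyl}(q) = \tilde{c}_{cyl}(q)^{-1/2},$$
where $\tilde{c}_{cyl}(q)$ is the $(1,1)$-Markoff constant of $\La^{cyl}_q$. Reading $c(q)_{cyl}$ in the statement as the associated constant from \S\ref{sec:markoff:minkowski}—exactly the convention used in the analogous Corollary~\ref{cor:qd:mm}—this is the second asserted equality.

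I do not foresee a real obstacle: all of the geometric content (discreteness of the cylinder holonomy set, the Mumford compactness criterion yielding the Mahler property, and Masur's bound on the shortest cylinder yielding the Minkowski property via Lemma \ref{lemma:john}) is absorbed into Theorem \ref{theorem:qd:cyl}, and the linear-algebraic content is handled by the two abstract results. The only mildly delicate step is to confirm that $\Pi_{1,1}(\La^{cyl}_q)$ inherits the no-vertical-vectors hypothesis from $\La^{cyl}_q$, which is automatic because $\Pi_{1,1}$ takes coordinate norms, so a vertical vector in $\Pi_{1,1}(\La^{cyl}_q)$ would come from a vector in $\La^{cyl}_q$ with vanishing first coordinate, i.e. a vertical holonomy vector.
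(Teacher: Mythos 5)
Your proposal is correct and takes essentially the same route as the paper, which after stating Theorem~\ref{theorem:qd:cyl} simply observes that Corollary~\ref{cor:qd:mm:cyl} is the ``natural analogue of Corollary~\ref{cor:qd:mm}'' obtained by applying Theorem~\ref{theorem:axiomatic} and Proposition~\ref{prop:markoff:mink} with $d=2$, $m=n=1$ to the Mahler--Minkowski system $(Q_g, q \mapsto \La^{cyl}_q)$. You have merely made explicit the identifications ($l_{cyl} = \alpha_1$, $\mu_{cyl}(q) = \mu(\Pi_{1,1}(\La^{cyl}_q))$, no vertical vectors) that the paper leaves implicit.
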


\begin{remark*} As above, Masur's logarithm law~\cite{Masur:loglaw} implies that for almost every $q$, $\gamma_{cyl}(q)=0$, yielding that $\mu_{cyl}q) = 2$ and $\beta_{cyl}(q) =0$ almost everywhere in $Q_g$. \end{remark*}

\subsection{Hyperbolic Surfaces}\label{subsubsec:hyp} Let $\Gamma \subset SL(2, \R)$ be a \emph{non-uniform lattice}, that is, let $\Gamma$ be a finite-volume, non-compact discrete subgroup of $SL(2, \R)$. $X = SL(2, \R)/\Gamma$ is the unit-tangent bundle of the finite-volume non-compact hyperbolic orbifold $\h^2/\Gamma$. Let $g_t$ and $h_s$ be as in \S\ref{subsubsec:qd}. Let $d(\cdot, \cdot)$ denote the hyperbolic metric on $\h^2/\Gamma$ (we normalize so that the curvature on $\h^2$ is $-1$). By abuse of notation, given $x, y \in X$, we will write $d(x, y)$ for the distance between their projections to $\h^2/\Gamma$. Given $x \in X$, we will write $B(x, R)$ for the collection of $y \in X$ so that $d(x, y) < R$. Given $x = g\Gamma$, we have that $Hx$ is closed in $X$ if and only if $\{g_t x\}_{t \geq 0}$ is divergent in $x$. We call such $x$ \emph{cuspidal}. An application of our main result is as follows:

\begin{Theorem}\label{theorem:hyp:excursions} Fix $x_0 \in X$, and suppose $x$ is not cuspidal. Then 
$$\limsup_{|s| \rightarrow \infty} \frac{d(h_s x, x_0)}{\log |s|} = 1 + \limsup_{t \rightarrow \infty} \frac{d(g_t x, x_0)}{t} $$
\end{Theorem}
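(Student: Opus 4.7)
The plan is to reduce the statement to Theorem~\ref{theorem:axiomatic} for the Minkowski system $X = SL(2,\R)/\Gamma$ with $m = n = 1$, $d = 2$, using the assignment $x \mapsto \La_x$ constructed in \S\ref{sec:hyp} (Proposition~\ref{prop:hyp:mink}). Theorem~\ref{theorem:axiomatic} gives $\beta(x) = \tfrac{1}{2} + \gamma(x)$ whenever $\La_x$ has no vertical vector, so the real content is translating between the Mahler function $\alpha_1$ and hyperbolic distance, and matching the vertical-vector hypothesis to non-cuspidality.

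First I would verify the \emph{hypothesis matching}: $x$ is cuspidal if and only if $\La_x$ contains a vertical vector. Indeed, by Mahler, $\{g_t x\}_{t \geq 0}$ is divergent iff there exist $\vv_t \in \La_{g_t x}$ with $\|\vv_t\| \to 0$. A vector $\vv = (a,b)^T \in \La_x$ satisfies $g_t \vv = (e^{t/2}a, e^{-t/2}b)^T$, whose norm tends to $0$ as $t \to \infty$ iff $a = 0$. Hence the non-cuspidal hypothesis of the theorem is exactly the hypothesis of Theorem~\ref{theorem:axiomatic}.

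Next I would establish the comparison
\begin{equation*}
d(x, x_0) = 2\log \alpha_1(x) + O(1),
\end{equation*}
with an implicit constant depending only on $\Gamma$ and $x_0$. This follows from standard thick-thin decomposition of $\h^2/\Gamma$: in a cusp neighborhood, a height-$y$ horocycle has hyperbolic length $\asymp 1/y$, the shortest vector in $\La_x$ has length $\asymp 1/\sqrt{y}$, and the distance from the thick part is $\log y + O(1)$, so that $d(x, x_0) = \log y + O(1) = 2 \log \alpha_1(x) + O(1)$. On the compact thick part both quantities are bounded, so the comparison holds globally. This comparison is the one essentially provided by Barak Weiss's Proposition~\ref{prop:hyp:mink} and the Mahler criterion together.

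Finally, dividing the comparison through by $\log|s|$ and by $t$ respectively, the bounded error terms vanish in the $\limsup$'s and one obtains
\begin{equation*}
\limsup_{|s|\to\infty} \frac{d(h_s x, x_0)}{\log |s|} = 2\beta(x), \qquad \limsup_{t\to\infty} \frac{d(g_t x, x_0)}{t} = 2\gamma(x).
\end{equation*}
Combining with $\beta(x) = \tfrac{1}{2} + \gamma(x)$ from Theorem~\ref{theorem:axiomatic} gives the claimed identity $2\beta(x) = 1 + 2\gamma(x)$. The main obstacle is the global comparison step when $\Gamma$ has several inequivalent cusps: one must argue that even though the geodesic or horocycle orbit may wander between cusps, the Mahler function $\alpha_1$ still faithfully records the depth of the \emph{current} cusp excursion uniformly in $x$, which is exactly the content of the discrete-set construction in \S\ref{sec:hyp}.
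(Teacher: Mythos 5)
Your proposal is correct and follows the route the paper itself sketches in \S\ref{sec:hyp:mink}: for $I = \{1, \ldots, k\}$ (the full set of cusps) Proposition~\ref{prop:hyp:mink} together with Lemma~\ref{lemma:john} shows that the Minkowski condition holds for $\La_x = \La_{x, \{1,\ldots,k\}}$, and the assignment is Mahler by the length-of-closed-horocycle argument of Lemma~\ref{lemma:quant:hyp:mahler}, so Theorem~\ref{theorem:axiomatic} applies directly and the precise comparison between $\log\alpha_1$ and hyperbolic distance coming out of that lemma converts $\beta(x) = \tfrac{1}{2}+\gamma(x)$ into the statement about $d(\cdot,x_0)$.

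A few remarks comparing this to what the paper actually writes out. The paper proves the stronger per-cusp Claim~\ref{claim:hyp:excursions}, for an arbitrary subset $I$ of the cusps, and recovers Theorem~\ref{theorem:hyp:excursions} by taking $I = \{1, \ldots, k\}$. For proper $I$, the set $\La_{x,I}$ does \emph{not} satisfy the Minkowski condition (exactly the ``main obstacle'' you flag at the end), so the paper cannot invoke Theorem~\ref{theorem:axiomatic}; instead it feeds the coarse geometric bounds $d(h_s y,y) \le 2\log s$ and $d(g_t y,y) \le t$ through Corollary~\ref{cor:quant:hyp:mahler} to force $\gamma(\La_{x,I}) \in [0,\tfrac12]$, and then runs parts (4), (1), (2) of Theorem~\ref{theorem:abstract} directly. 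What the paper's route buys is the generality over cusp subsets; what yours buys is a one-line reduction once the Mahler--Minkowski structure is in place, valid for the specific statement of Theorem~\ref{theorem:hyp:excursions}. Both are in the paper; you have chosen the shorter of the two.

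Two small points worth tightening. First, your hypothesis-matching step argues that a vertical $\vv\in\La_x$ forces $g_t\vv\to 0$, hence $x$ cuspidal; but the reverse implication, that divergence of $\{g_t x\}$ produces an actual vertical vector in $\La_x$ and not merely a sequence $\vv_t\in\La_x$ with $p_1(\vv_t)\to 0$, needs a word: the divergent geodesic escapes into one particular cusp $\Delta_i$, the point $x$ then lies on a closed $\Delta_i$-horocycle, and $\rho(g\gamma)\vv_i$ is vertical for the appropriate $\gamma$. Second, the comparison you state as $d(x,x_0) = 2\log\alpha_1(x)+O(1)$ uniformly is a bit stronger than what the paper records (the asymptotic form of Lemma~\ref{lemma:quant:hyp:mahler} and Corollary~\ref{cor:quant:hyp:mahler}); either form suffices to pass through the $\limsup$, but if you want the global $O(1)$ version you should note it uses compactness of the thick part as you indicate.
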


\begin{remark*} A particular strength of this theorem is that it holds even for $x$ taken from the set of measure $0$ where $\gamma(x) = \limsup_{t \rightarrow \infty} \frac{d(g_t x, x_0)}{t} >0$. The fact that $\beta(x) = \limsup_{|s| \rightarrow \infty} \frac{d(h_s x, x_0)}{\log |s|}= 1$ almost everywhere has recently been generalized to the setting of quotients of products of $SL(2,\R)$ and $SL(2, \C)$ by Kelmer-Mohammadi~\cite{KelMoh}.\end{remark*}

\medskip\noindent In fact, we will be able to prove results about excursions to each individual cusp of $X = SL(2, \R)/\Gamma$ by studying different families of discrete sets. Theorem~\ref{theorem:hyp:excursions} will be a corollary of these results obtained by taking the union of the discrete sets associated to each cusp. Our discrete sets will be associated to discrete orbits of the linear $\Gamma$ action on $\R^2\minuszero$, which we will view as the space of horocycles on $G = SL(2, \R)$. Such discrete orbits exist for a lattice if and only if it is nonuniform (cf.~\cite{Ledrappier}). In \S\ref{sec:hyp}, we will carefully define our assignment and prove that it satisfies a \emph{quantitative} Mahler condition, in which the length of short vectors will be precisely related to hyperbolic distance. We do not have a proof of the Minkowski condition in this setting, but we will not require it, as other geometric considerations will allow us to circumvent it.  

\subsubsection{Deviation of ergodic averages}\label{subsec:dev} Theorem~\ref{theorem:hyp:excursions} yields a non-trivial lower bound on the deviation of ergodic averages for the horocycle flow on $X = SL(2, \R)/\Gamma$. While it does not match the results obtained by Strombergsson~\cite{Strombergsson}, it provides non-trivial information using elementary techniques, in particular, with no reference to the eigenvalues of the hyperbolic Laplacian on $X$. Let $\mu$ denote the probability measure on $X$ given by (normalized) Haar measure on $SL(2, \R)$.

\begin{Theorem}\label{theorem:hyp:deviations} Given $x_0, x \in X$ so that $x$ is not cuspidal, let $$\gamma = \gamma(x) : =  \limsup_{t \rightarrow \infty} \frac{d(g_t x, x_0)}{t}.$$ Suppose $\gamma >0$. Then for all $S>0, \epsilon \in (0, \gamma), \delta \in (0, 1)$, there is a $s_0 > S$ and $\kappa >0$ so that, writing $C = B(x_0, \kappa \log s_0)$, 
\begin{equation}\label{eq:hyp:deviations}
\left| \int_{-s_0}^{s_0} \chi_C( h_s x) ds  - 2s_0 \mu (C)\right| \geq s_0^{\gamma - \epsilon} (1- \delta)
\end{equation}
\end{Theorem}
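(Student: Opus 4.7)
By Theorem~\ref{theorem:hyp:excursions}, $\beta(x) = 1 + \gamma$, so setting $\epsilon_1 := \epsilon/3$ we may fix $s_0 > S$ as large as we wish with
$$d(h_{s_0} x, x_0) \ge (1 + \gamma - \epsilon_1) \log s_0.$$
Take $\kappa := 1$ and $C := B(x_0, \log s_0)$. Since any non-compact, finite-area hyperbolic surface has exponentially decaying cusp volume, $\mu(X \setminus C) = O(s_0^{-1})$, and hence $2 s_0 (1 - \mu(C)) = O(1)$.

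The heart of the argument is to produce a long interval of times around $s_0$ on which $h_s x \notin C$. By the quantitative Mahler condition of Section~\ref{sec:hyp}, the depth at $s_0$ is witnessed by a vector $v' = (v'_1, v'_2) \in \Lambda_{h_{s_0} x}$ with $\|v'\| \le K_1\, s_0^{-(1+\gamma-\epsilon_1)}$. The unipotent flow acts by $h_u v' = (v'_1, v'_2 + u v'_1)$, so $\|h_u v'\|^2$ is a quadratic polynomial in $u$. An elementary expansion using AM-GM gives
$$\|h_u v'\| \le \sqrt{2}\,\|v'\|\,(1 + |u|) \quad \textrm{for every } u \in \R,$$
so for any $|u| \le T := c\, s_0^{\gamma - \epsilon_1}$ with $c > 0$ a sufficiently small fixed constant, one has $\|h_u v'\| \le K_2/s_0$. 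Invoking the quantitative Mahler condition in the reverse direction, this places $h_{s_0+u} x = h_u(h_{s_0} x)$ outside $C$ for all such $u$.

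Since $\gamma \le 1$ (the geodesic flow is 1-Lipschitz) and $\epsilon_1 > 0$, we have $T < s_0$ for $s_0$ large, so $[s_0 - T, s_0] \subset [-s_0, s_0]$ and $\chi_C(h_s x) = 0$ throughout this subinterval. Expanding the deviation as
$$\int_{-s_0}^{s_0} \chi_C(h_s x)\, ds - 2s_0 \mu(C) = 2s_0(1 - \mu(C)) - \bigl|\{s \in [-s_0, s_0] : h_s x \notin C\}\bigr|,$$
we conclude that its absolute value is at least $T - 2s_0(1 - \mu(C)) \ge c\, s_0^{\gamma - \epsilon_1} - O(1)$, which is at least $s_0^{\gamma - \epsilon}(1 - \delta)$ for $s_0$ sufficiently large since $s_0^{\gamma - \epsilon_1}/s_0^{\gamma - \epsilon} = s_0^{2\epsilon/3} \to \infty$.

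The sole non-trivial ingredient is the duration estimate in the second paragraph: a single near-vertical vector in $\Lambda_x$ must force the orbit to stay deep in the cusp for an interval whose length is comparable to the gap between the peak depth and the target depth $\kappa \log s_0$. This is exactly the quadratic-in-$u$ observation highlighted in the strategy-of-proof sketch for Theorem~\ref{theorem:axiomatic}, now deployed with the quantitative Mahler condition of Section~\ref{sec:hyp} in place of the Mahler criterion for $SL(2,\Z)$.
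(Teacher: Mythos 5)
Your overall strategy is the same as the paper's: locate a deep horocyclic excursion with $d(h_{s_0}x,x_0)\approx(1+\gamma)\log s_0$, observe that the trajectory must stay outside a ball $C=B(x_0,\kappa\log s_0)$ for a long interval around $s_0$, and use the exponential decay of cusp volume to control $2s_0\mu(C)$. However, there is a genuine gap that breaks the argument for small $\epsilon$.

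The issue is a factor of $2$ in your use of the quantitative Mahler condition, compounded by the choice $\kappa=1$. Lemma~\ref{lemma:quant:hyp:mahler} gives $d(y,x_0)\sim 2\log\alpha_1(\La_y)$, i.e.\ the shortest vector of $\La_y$ has norm $\approx e^{-d(y,x_0)/2}$, not $e^{-d(y,x_0)}$. So the witness at the peak has $\|v'\|\lesssim s_0^{-(1+\gamma-\epsilon_1)/2}$, and to be outside $C=B(x_0,\log s_0)$ one only needs a vector of norm $\lesssim s_0^{-1/2}$, not $\lesssim s_0^{-1}$. Running your estimate $\|h_u v'\|\lesssim\|v'\|(1+|u|)$ with the corrected exponents gives $(1+|u|)\lesssim s_0^{-1/2}/s_0^{-(1+\gamma-\epsilon_1)/2}=s_0^{(\gamma-\epsilon_1)/2}$, so the interval of excursion has length $T\approx s_0^{(\gamma-\epsilon_1)/2}$, \emph{not} $s_0^{\gamma-\epsilon_1}$. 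With $\epsilon_1=\epsilon/3$ this beats the target $s_0^{\gamma-\epsilon}$ only when $(\gamma-\epsilon/3)/2\geq\gamma-\epsilon$, i.e.\ when $\epsilon\geq 3\gamma/5$. For $\epsilon$ small relative to $\gamma$ your argument does not produce the claimed lower bound.

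The fix is to take $\kappa$ much smaller, close to $1-\gamma$, so that the depth gap between the peak and the boundary of $C$ is roughly $(2\gamma-2\epsilon)\log s_0$ rather than $(\gamma-\epsilon_1)\log s_0$; since horocycle orbits move distance at most $2\log|u|$ in time $u$, this doubles the exponent and yields a full $T\approx s_0^{\gamma-\epsilon}$. The paper takes $\kappa=1-\gamma+\tfrac{3\epsilon}{2}$, which makes the volume error $2s_0(1-\mu(C))\lesssim s_0^{1-\kappa}=s_0^{\gamma-3\epsilon/2}$ larger than your $O(1)$ but still asymptotically smaller than $s_0^{\gamma-\epsilon}$. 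One other remark: the paper's proof does not route back through the short-vector estimate at all; it just invokes the geometric fact $d(h_s y,y)\le 2\log|s|$ directly to lower-bound the return time, which avoids any bookkeeping of norms. Your version via $\|h_u v'\|\le\sqrt{2}\|v'\|(1+|u|)$ encodes the same displacement bound, but it exposes you to precisely the exponent bookkeeping that went wrong here.
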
 

\medskip

\noindent We will prove this theorem in \S\ref{sec:devproof}. A main weakness of the result is that it does not apply to a single, fixed compact set but rather a sequence of growing targets. However, the shape of these sets is specified, which makes it a non-trivial result. The results of~\cite{Strombergsson} are considerably more delicate and sophisticated, but as mentioned above, require much more detailed analysis.

\section{The Motivating Example}\label{sec:sl2}

\noindent To illustrate our approach, we consider our base example $$X_2 = SL(2, \R)/SL(2, \Z).$$ 

\subsection{Interpretations of $X_2$}

$X_2$ can be identified the space of unimodular lattices in $\R^2$ via
$$gSL(2, \Z) \leftrightarrow g\Z^2$$
\noindent Further identifying 
$$g\Z^2 \leftrightarrow \R^2/g\Z^2$$
\noindent we can view $X_2$ as the space of unit-area flat tori with a choice of direction (the vertical in $\R^2$). Finally, identifying
$$\R^2/g\Z^2 \leftrightarrow (\C/g\Z^2, (dz)^2),$$ 

\noindent we identify $X_2$ with the space of unit-area holomorphic quadratic differentials on compact genus $1$ Riemann surfaces.

\subsection{Discrete sets}\label{sec:ident}

As above, let $\Z^2_{prim}$ denote the set of non-zero primitive vectors in $\Z^2$.
\begin{equation}\label{assign2} x = g SL(2, \Z) \leftrightarrow \Lambda_x = g \Z^2_{prim}\end{equation}

\noindent This assignment is $SL(2, \R)$-equivariant, and assigns to each coset the primitive vectors in the corresponding unimodular lattice, or equivalently the set of holonomy vectors of the (square root of the) differential integrated along simple closed curves. We recall the classical Mahler compactness criterion and Minkowski convex body theorem. 

\begin{Prop}[Mahler's compactness criterion]\label{prop:mahler:2}

$A \subset X_2$ is pre-compact if and only if there exists an $\epsilon_0 >0$ such that for all $x \in A$,  for all $v \in \Lambda_x$, $\|v\| \geq \epsilon_0$. Here, $\| \cdot \|$ is any norm on $\R^2$

\end{Prop}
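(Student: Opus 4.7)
My plan is to prove both directions of Mahler's criterion from Minkowski's convex body theorem together with elementary topology of $SL(2,\R)$.

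For the forward direction (precompactness implies a uniform short-vector bound), I would argue by contradiction. Assume $A$ is precompact but some sequence $x_n \in A$ admits $v_n \in \La_{x_n}$ with $\|v_n\| \to 0$. Passing to a subsequence, $x_n \to x_\infty \in \overline{A}$, and I can pick lifts $g_n \in SL(2,\R)$ with $g_n SL(2,\Z) = x_n$ converging to some $g_\infty \in SL(2,\R)$ (this uses local sections of the quotient map $SL(2,\R) \to X_2$, valid since $SL(2,\Z)$ acts properly discontinuously). Writing $v_n = g_n w_n$ with $w_n \in \Z^2_{prim}$ gives $w_n = g_n^{-1} v_n \to 0$, contradicting $w_n \in \Z^2 \minuszero$.

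For the reverse direction, given a sequence $x_n \in A$, I would construct a well-reduced basis representative for each $x_n$. Let $v_n^{(1)} \in \La_{x_n}$ be a shortest nonzero vector and $v_n^{(2)}$ a shortest vector linearly independent from $v_n^{(1)}$. A Gauss-reduction argument specific to dimension two shows that $\{v_n^{(1)}, v_n^{(2)}\}$ is in fact a $\Z$-basis of the underlying unimodular lattice, so the matrix $\tilde g_n = (v_n^{(1)} \mid v_n^{(2)})$, with a column sign adjusted if necessary so that $\det = 1$, represents $x_n$. The hypothesis gives $\|v_n^{(i)}\| \geq \epsilon_0$; Minkowski's convex body theorem applied to Euclidean balls gives the upper bound $\|v_n^{(1)}\| \leq 2/\sqrt{\pi}$; and Minkowski's second theorem yields $\|v_n^{(1)}\| \|v_n^{(2)}\| \leq 4/\pi$, hence $\|v_n^{(2)}\| \leq 4/(\pi \epsilon_0)$. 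Thus $\{\tilde g_n\}$ lies in a closed and bounded, hence compact, subset of $SL(2,\R) \subset M_2(\R)$, and a convergent subsequence exists.

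The only mildly subtle step is verifying that a shortest vector together with a shortest linearly-independent vector really spans the lattice over $\Z$. In dimensions $\geq 5$ this can fail, but in $d = 2$ it is a short argument: any lattice vector reduced modulo $\Z v_n^{(1)} + \Z v_n^{(2)}$ would either vanish (confirming the basis property) or have length strictly less than $\max(\|v_n^{(1)}\|, \|v_n^{(2)}\|)$ while remaining linearly independent from $v_n^{(1)}$, contradicting minimality. Once this reduction lemma is in place, the rest is standard compactness in $M_2(\R)$.
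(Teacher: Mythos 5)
The paper states Mahler's compactness criterion (Proposition~\ref{prop:mahler:2}) as a classical result and supplies no proof of it, so there is nothing in the text to compare your argument against directly. Your proof is correct and is the standard Gauss--Minkowski reduction argument for $d=2$.

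A few small remarks. In the forward direction, the justification for lifting a convergent sequence $x_n \to x_\infty$ to a convergent sequence $g_n \to g_\infty$ in $SL(2,\R)$ is cleanest if you note that the right action of $SL(2,\Z)$ on $SL(2,\R)$ is \emph{free} and properly discontinuous, so the quotient map $SL(2,\R)\to X_2$ is a covering map; then local sections exist and the lifting is immediate. In the reverse direction, your use of Minkowski's second theorem is slightly heavier machinery than what the paper itself records (Proposition~\ref{prop:minkowski:2} is only the first theorem); you could instead bound $\|v_n^{(2)}\|$ directly from unimodularity: writing $\theta_n$ for the angle between the two successive minima, $\|v_n^{(1)}\|\,\|v_n^{(2)}\|\sin\theta_n = 1$, and the Gauss-reduction step forces $\theta_n \in [\pi/3, 2\pi/3]$, hence $\|v_n^{(2)}\| \le \frac{2}{\sqrt{3}\,\epsilon_0}$. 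Finally, your basis lemma is fine, but the strictness in ``strictly shorter'' deserves the one-line justification that equality in the triangle inequality for the Euclidean norm forces $a v^{(1)}$ and $b v^{(2)}$ to be parallel, which is impossible for $a,b\neq 0$ when $v^{(1)}, v^{(2)}$ are linearly independent. These are all polish, not gaps; the argument stands.
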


\medskip\noindent Defining $\alpha_1: X_2 \rightarrow \R^+$ by $\alpha_1(x) : = \sup_{\vv \in \La_x} \frac{1}{\| \vv\|}$, Proposition~\ref{prop:mahler:2} says that $\alpha_1$ is unbounded off of compact sets (notice this is independent of the choice of norm used to define $\alpha_1$).

\noindent 
\begin{Prop}\label{prop:minkowski:2} Let $\La \subset \R^2$ be a unimodular lattice, and let $K \subset \R^2$ be a convex, centrally symmetric set of volume at least $4$. Then there is a (non-zero) vector
$$\mathbf{v} \in \La_{prim} \cap K.$$

\end{Prop}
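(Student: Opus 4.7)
The plan is to reduce this to the classical form of Minkowski's first theorem (which produces some nonzero lattice vector in $K$) and then leverage the convexity of $K$ together with the fact that $0 \in K$ to upgrade from a lattice vector to a primitive one.

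First I would prove the standard statement: under the hypotheses, $K \cap \La \neq \{0\}$. Set $L = \frac{1}{2}K$. Then $\mbox{vol}(L) \geq 1 = \mbox{covol}(\La)$, so the quotient map $\pi : \R^2 \to \R^2/\La$ cannot be injective on $L$ (strictly: if $\mbox{vol}(L) > 1$ this is the pigeonhole argument on $\pi_* \chi_L$; if $\mbox{vol}(L)=1$, apply the strict case to $(1+1/k)K$ and extract a limit point, using that a centrally symmetric convex body of finite volume in $\R^2$ is bounded, hence $(1+1/k)K \cap \La$ is finite). Pick $x \neq y$ in $L$ with $x - y \in \La$; then $2x \in K$ and $-2y \in K$ by central symmetry, so by convexity
\[
x - y = \tfrac{1}{2}(2x) + \tfrac{1}{2}(-2y) \in K,
\]
giving a nonzero element $\vv_0 \in \La \cap K$.

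Next I would pass from $\vv_0$ to a primitive vector. Write $\vv_0 = k \vv$ with $\vv \in \La_{\prim}$ and $k \in \N$, $k \geq 1$. Since $K$ has positive volume and is centrally symmetric, it contains some $p$ and $-p$, and hence $0 = \tfrac{1}{2} p + \tfrac{1}{2}(-p) \in K$ by convexity. Then
\[
\vv = \tfrac{1}{k}\vv_0 + \bigl(1 - \tfrac{1}{k}\bigr) \cdot 0 \in K
\]
by convexity again, giving the desired primitive vector in $\La_{\prim} \cap K$.

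There is no serious obstacle: the argument is entirely classical. The only mildly delicate point is the boundary case $\mbox{vol}(K) = 4$ in the pigeonhole step, which I would dispose of by the approximation argument indicated above; once a nonzero lattice vector is produced, the primitive upgrade is an immediate convexity step that uses only $0 \in K$ and $\vv_0 \in K$.
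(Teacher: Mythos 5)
Your argument is correct and is the standard proof of Minkowski's convex body theorem, supplemented by the clean convexity step $\vv = \tfrac{1}{k}\vv_0 + (1-\tfrac{1}{k})\cdot 0$ to pass to a primitive vector. The paper itself gives no proof here: it simply records Proposition~\ref{prop:minkowski:2} as the classical Minkowski theorem, so there is no in-paper argument to compare against. One small caveat worth noting: the statement as written (both yours and the paper's) tacitly requires $K$ to be closed in the boundary case $\mbox{vol}(K)=4$ --- the open square $(-1,1)^2$ with $\La = \Z^2$ shows the conclusion can fail otherwise, and your limit-point extraction from $(1+1/k)K$ only lands in $\overline{K}$. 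This does not affect any use of the proposition in the paper, where the Minkowski condition is always applied with a strict volume margin.
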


\subsection{Dynamics and Diophantine approximation}\label{sec:dyndioph}

\subsubsection{Diophantine exponents in $\R^2$}\label{sec:diophexp:lattices}

\noindent Let $\La$ be a unimodular lattice. It is a standard exercise to use Minkowski's Theorem (Proposition~\ref{prop:minkowski:2}) to prove the following

\begin{lemma}\label{lemma:exp:2} Let $\La \subset \R^2$ be a unimodular lattice with no vertical vectors. Then 
\begin{equation}\label{eq:exp:2} \mu(\La) \geq 2\end{equation}

\end{lemma}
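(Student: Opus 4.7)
The plan is to exhibit, for every large parameter $T$, a nonzero lattice vector confined to a thin, tall rectangle, and then to argue that as $T\to\infty$ we obtain infinitely many such vectors whose second coordinates diverge. Specifically, for each $T\geq 1$ consider the convex centrally symmetric rectangle
\[
K_T := \Bigl[-\tfrac{1}{T},\tfrac{1}{T}\Bigr]\times[-T,T]\subset\R^2,
\]
which has area $4$. Minkowski's theorem (Proposition~\ref{prop:minkowski:2}) produces a nonzero $\vv_T=(x_T,y_T)\in\La\cap K_T$, so $|x_T|\leq 1/T$ and $|y_T|\leq T$, which in particular gives $|x_T|\cdot|y_T|\leq 1$.

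The key remaining point is that the vectors $\vv_T$ cannot all stay bounded as $T\to\infty$. Indeed, if there were a subsequence $T_k\to\infty$ along which $\vv_{T_k}$ stays in a fixed compact set, then by discreteness of $\La$ some single nonzero vector $\vv=(x,y)\in\La$ would be chosen for infinitely many $T_k$; but then $|x|\leq 1/T_k\to 0$ would force $x=0$, contradicting the hypothesis that $\La$ has no vertical vectors. Hence $\|\vv_T\|\to\infty$, and since $|x_T|\leq 1/T\to 0$, it must be $|y_T|$ that tends to infinity.

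Thus we can extract a sequence of distinct lattice vectors $\vv_k=(x_k,y_k)\in\La$ with $|y_k|\to\infty$ and $|x_k|\leq 1/|y_k|$, in particular $|x_k|<2\,|y_k|^{-(2-1)}$. By the definition \eqref{nu2} with $C=2$, this shows $2\in\Exp(\La)$, and therefore $\mu(\La)=\sup\Exp(\La)\geq 2$.

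The only delicate point is the passage from ``one short vector per $T$'' to ``infinitely many with $|y_k|\to\infty$''; as indicated this is handled by combining discreteness of $\La$ with the absence of vertical vectors, so no real obstacle arises.
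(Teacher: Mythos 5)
Your proof is correct, and it is essentially the same approach the paper uses: the paper calls Lemma~\ref{lemma:exp:2} a ``standard exercise'' and omits the proof, but its proof of the higher-dimensional analogue, Lemma~\ref{lemma:exp:d}, uses exactly the family of shrinking/stretching boxes $K_j$ with $\|p_1\|\le d/j$, $\|p_2\|\le j^{m/n}$ and Minkowski's theorem; your $K_T=[-1/T,1/T]\times[-T,T]$ is the $m=n=1$ specialization. If anything, your argument for why $|y_T|\to\infty$ (via discreteness plus absence of vertical vectors) is spelled out a bit more carefully than in the paper's version, which only remarks that $\|p_2(\vv_j)\|$ is bounded below and concludes that the ratio $\|p_2(\vv_j)\|/\|p_1(\vv_j)\|\to\infty$, leaving the $|y_j|\to\infty$ clause of the definition of an exponent implicit.
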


%
%
%
%

Lemma~\ref{lemma:exp:2} allows us to define the \emph{Markoff constant} $\tilde{c}(\La)$ of a lattice $\La$ as the infimum of all $C$ so that there exist a sequence of vectors $\left(\begin{array}{c}x_j \\y_j\end{array}\right) \in \La$ satisfying (\ref{nu2}).

\subsubsection{Geodesic and horocycle flows}\label{sec:geod:horo}

There is a relation between the cusp excursions of the orbit of the point $x = gSL(2, \Z)$ under the geodesic and horocycle flows on $X_2$ and the diophantine exponent $\mu(x) = \mu(\La_x)$.

\begin{equation}\label{eq:geod}
g_t : = \left(\begin{array}{cc}e^{t/2} & 0 \\0 & e^{-t/2} \end{array}\right)
\end{equation}

\begin{equation}\label{eq:horo}
h_s = \left(\begin{array}{cc}1 & 0 \\s &  1\end{array}\right)
\end{equation}

\noindent The actions of these one-parameter subgroups of $SL(2,\R)$ give the geodesic and horocycle flows on $X_2$ respectively. In~\cite{AM1}, we studied the relationship between $\mu(g\Z^2)$ and the behavior of the orbits $\{g_t x\}$ and $\{h_s x\}$ for a particular collection of lattices $x$. The following result is a straightforward generalization of Proposition 3.1 from~\cite{AM1}.

\begin{Prop}\label{prop:dioph:2} Let $x \in X_2$ be such that $\La_x$ does not have vertical vectors. Then

\begin{equation}\label{eq:horo:lim}
\limsup_{|s| \rightarrow \infty} \frac{\log(\alpha_1(h_sx))}{\log |s|} = 1 - \frac{1}{\mu(x)} 
\end{equation}

\begin{equation}\label{eq:geod:lim}
\limsup_{t \rightarrow \infty} \frac{\log(\alpha_1(g_tx))}{t} = \frac{1}{2} - \frac{1}{\mu(x)} 
\end{equation}

\end{Prop}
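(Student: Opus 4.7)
The proof turns on the linear-algebraic observation that for $\vv = (x,y)^T \in \La_x$,
$$\|h_s \vv\| = \max(|x|, |y+sx|), \qquad \|g_t \vv\| = \max(e^{t/2}|x|, e^{-t/2}|y|),$$
whose minima over $s$ and $t$ are $|x|$ (attained at $s = -y/x$) and $\sqrt{|xy|}$ (attained at $e^t = |y|/|x|$), respectively. Minkowski's theorem (Proposition~\ref{prop:minkowski:2}), applied to axis-aligned rectangles of area $4$, gives $\mu(x) \geq 2$, so $\Exp(x)$ is nonempty; I will prove both identities by matching upper and lower bounds.

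For the lower bounds, fix $\nu \in \Exp(x)$ and an approximating sequence $\vv_j = (x_j, y_j)^T \in \La_x$ with $|x_j| \leq C|y_j|^{-(\nu-1)}$ and $|y_j| \to \infty$. Setting $s_j = -y_j/x_j$ gives $|s_j| \geq |y_j|^\nu/C$ and $\alpha_1(h_{s_j} x) \geq 1/|x_j| \geq C^{-1}|y_j|^{\nu-1}$, so $\log \alpha_1(h_{s_j}x)/\log|s_j| \to 1 - 1/\nu$. Setting instead $e^{t_j} = |y_j|/|x_j|$, so that $\alpha_1(g_{t_j} x) \geq 1/\sqrt{|x_j y_j|}$, and writing $u_j = -\log|x_j|$ and $v_j = \log|y_j|$, the resulting ratio $(u_j - v_j)/(2(u_j + v_j))$ is an increasing function of $u_j$, and so under the constraint $u_j \geq (\nu-1)v_j - \log C$ it is bounded below by $1/2 - 1/\nu - o(1)$. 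Taking $\nu \nearrow \mu(x)$ proves the ``$\geq$'' halves of (\ref{eq:horo:lim}) and (\ref{eq:geod:lim}).

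For the upper bound on $\beta_h := \limsup_{|s|\to\infty}\log\alpha_1(h_s x)/\log|s|$, fix $\epsilon > 0$ and choose $s_j \to \infty$ with $\alpha_1(h_{s_j}x) \geq |s_j|^{\beta_h-\epsilon}$, realized by $\vv_j = (x_j,y_j)^T \in \La_x$ satisfying $\max(|x_j|, |y_j+s_j x_j|) \leq |s_j|^{-(\beta_h-\epsilon)}$. This forces $|x_j| \leq |s_j|^{-(\beta_h-\epsilon)}$ and $|y_j| \leq 2|s_j|^{1-\beta_h+\epsilon}$, and solving for $|x_j|$ in terms of $|y_j|$ with $\nu = 1/(1-\beta_h+\epsilon)$ yields $|x_j| \leq C'|y_j|^{-(\nu-1)}$. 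Provided $|y_j| \to \infty$, this puts $\nu$ in $\Exp(x)$, so letting $\epsilon \to 0$ gives $\mu(x) \geq 1/(1-\beta_h)$, i.e., $\beta_h \leq 1 - 1/\mu(x)$. The upper bound on $\gamma_g := \limsup_t \log \alpha_1(g_t x)/t$ is parallel: a realizing sequence produces $|x_j| \leq e^{-t_j(1/2+\gamma_g -\epsilon)}$ and $|y_j| \leq e^{t_j(1/2-\gamma_g+\epsilon)}$, yielding an approximating sequence of exponent $\nu = 1/(1/2 - \gamma_g + \epsilon)$ and hence $\gamma_g \leq 1/2 - 1/\mu(x)$.

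The main delicate point is verifying $|y_j| \to \infty$ along the chosen subsequence. If $\vv_j$ were bounded in norm, discreteness of $\La_x$ would force $\vv_j$ to take some fixed value $\vv = (x,y)^T$ infinitely often, with $x \neq 0$ by the no-vertical-vectors hypothesis; but then $|y + s_j x| \to \infty$, contradicting the short-vector estimate. The analogous argument in the $g_t$-case uses $e^{t_j/2}|x| \to \infty$. Hence $\|\vv_j\| \to \infty$, and since $|x_j|$ is bounded, $|y_j| \to \infty$, as needed.
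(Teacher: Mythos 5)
Your proof is correct and follows essentially the same route as the paper: minimize $\|h_s\vv\|$ at $s=-y/x$ (value $|x|$) and $\|g_t\vv\|$ at $e^t=|y|/|x|$ (value $\sqrt{|xy|}$), then trade between approximating sequences and short vectors in both directions, using discreteness and the absence of vertical vectors to force $|y_j|\to\infty$. The paper carries out the identical computation in $d$-dimensional generality (Claims~\ref{claim:theorem:1}--\ref{claim:theorem:2} for the horospherical part and Lemma~\ref{claim:prop:3} for the diagonal part of Theorem~\ref{theorem:abstract}) and then specializes; your argument is the clean $d=2$ instance of that machinery.
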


\medskip

\begin{remark*} This is a special case of our main result Theorem~\ref{theorem:axiomatic}. It is a simple generalization of \cite[Proposition 3.1]{AM1}, which considered (with the roles of upper and triangular matrices reversed) the special case where $$g  = \left(\begin{array}{cc}1 & \alpha \\0 &  1\end{array}\right),$$ with $\alpha \notin \Q$. Then, $\mu(g\Z^2)$ coincides with the classical notion of Diophantine exponent of $\alpha$, and the Markoff constant $\tilde{c}(g\Z^2)$ coincides with the classical Markoff constant of $\alpha$.

\end{remark*}

\subsubsection{Markoff constants}\label{sec:mark:mink} We can also detect the Markoff constant using the horocycle flow. Fix $\|\cdot\|$ to be supremum norm on $\R^2$, and define $\alpha_1$ on $X_2$ using this norm. We have

\begin{Prop}\label{prop:mark:2} Suppose $x \in X_2$ is such that $\La_x$ does not contain vertical vectors. Then
\begin{equation}\label{eq:mark:2}
\limsup_{|s| \rightarrow \infty} \frac{\alpha_1(h_sx)}{|s|^{\frac{1}{2}}} = \tilde{c}(x)^{-\frac{1}{2}}
\end{equation}

\end{Prop}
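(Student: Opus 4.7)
The plan is to unpack the sup-norm directly. Writing $\vv = (x_\vv, y_\vv)^T \in \La_x$, we have
$$\alpha_1(h_s x) \;=\; \sup_{\vv \in \La_x} \frac{1}{\max(|x_\vv|,\, |s x_\vv + y_\vv|)},$$
so $\alpha_1(h_s x)$ being large forces both $|x_\vv|$ and $|s x_\vv + y_\vv|$ to be small simultaneously. This is precisely the coupling that ties horocycle excursions to the product $|x_\vv y_\vv|$ defining the Markoff constant; for $m=n=1$, $\tilde c(x)$ is the infimum of $\tilde c > 0$ for which $|x_\vv y_\vv| \leq \tilde c$ holds along an infinite sequence of distinct vectors in $\La_x$.

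For the lower bound I would use a ``best time for each vector'' construction. Given $\vv \in \La_x$ with $x_\vv \neq 0$, set $s_\vv := -y_\vv/x_\vv$; then $h_{s_\vv}\vv = (x_\vv, 0)^T$, so $\alpha_1(h_{s_\vv} x) \geq 1/|x_\vv|$, and $|s_\vv|^{1/2} = |y_\vv|^{1/2}/|x_\vv|^{1/2}$ yields
$$\frac{\alpha_1(h_{s_\vv} x)}{|s_\vv|^{1/2}} \;\geq\; \frac{1}{|x_\vv\,y_\vv|^{1/2}}.$$
Choosing a sequence $\vv_k \in \La_x$ realizing $\tilde c(x) = \liminf |x_{\vv_k} y_{\vv_k}|$, and verifying $|s_{\vv_k}| \to \infty$ (which holds since $|x_{\vv_k}| \to 0$ and distinct lattice vectors with shrinking $|x|$ cannot stay in any horizontal strip), gives $\limsup \alpha_1(h_s x)/|s|^{1/2} \geq \tilde c(x)^{-1/2}$. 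If $\tilde c(x) = 0$, the same construction produces ratios diverging to $\infty$.

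For the upper bound, suppose $\alpha_1(h_{s_k} x)/|s_k|^{1/2} > C$ along some $|s_k| \to \infty$, and pick $\vv_k \in \La_x$ with $L_k := \max(|x_{\vv_k}|,\, |s_k x_{\vv_k} + y_{\vv_k}|) < 1/(C|s_k|^{1/2})$. The triangle inequality gives $|y_{\vv_k}| \leq (|s_k|+1)L_k$, hence
$$|x_{\vv_k}\,y_{\vv_k}| \;\leq\; (|s_k|+1)\,L_k^2 \;<\; \frac{|s_k|+1}{C^2 |s_k|} \;\longrightarrow\; \frac{1}{C^2}.$$
Taking $C$ arbitrarily close from below to $\limsup \alpha_1(h_s x)/|s|^{1/2}$ then gives $\tilde c(x) \leq (\limsup)^{-2}$, i.e.\ the reverse inequality.

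The main obstacle, in both directions, is a discreteness bookkeeping: one must verify that the $\vv_k$ are genuinely distinct and produce an infinite subsequence contributing to the Markoff liminf. In the upper-bound direction, $|x_{\vv_k}| \leq L_k \to 0$ together with discreteness of $\La_x$ and the absence of vertical vectors confines only finitely many $\vv_k$ to any bounded strip, forcing the sequence to escape to infinity. The no-vertical-vectors hypothesis is essential here: for a single $\vv$ with $x_\vv \neq 0$, one has $|sx_\vv+y_\vv| \sim |s||x_\vv|$, so $\alpha_1(h_s x)/|s|^{1/2}$ evaluated on that fixed $\vv$ decays like $|s|^{-1/2}$, and cannot sustain large values along $|s_k|\to\infty$ unless the $\vv_k$ truly vary.
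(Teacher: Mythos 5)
Your proof is correct and is essentially the paper's own argument (given in general $(m,n)$ form for Proposition~\ref{prop:abstract:markoff} in \S\ref{subsec:markoff:approx}) specialized to $m=n=1$: the ``flow to the best time $s_\vv = -y_\vv/x_\vv$'' step is exactly Claim~6.6, and the triangle-inequality unpacking of $\max(|x_\vv|,|sx_\vv+y_\vv|)$ followed by the discreteness bookkeeping is exactly Claim~6.7. One small clarification worth making: when restating $\tilde c(x)$ for $m=n=1$ you should require the witnessing sequence to have $|y_{\vv_k}|\to\infty$ (equivalently $|x_{\vv_k}|\to 0$) as in the paper's definition through~(\ref{nu2}), not merely ``distinct vectors''; otherwise a horizontally-escaping sequence with small $|x_\vv y_\vv|$ but $|s_\vv|\to 0$ could make your stated infimum strictly smaller than $\tilde c(x)$. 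Your subsequent verification (``$|x_{\vv_k}|\to 0$'') silently imports the correct convention, so the argument itself goes through unchanged.
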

\medskip
\noindent We will obtain this result as a corollary of our general result Proposition~\ref{prop:markoff:mink}. In \S\ref{sec:proof:prop:mark:2}, we will formulate \emph{one-sided} (i.e., $s \rightarrow \pm \infty$) versions of (\ref{eq:horo:lim}) and (\ref{eq:mark:2}).

\section{General Discrete Sets}\label{sec:abs}
In this section we state our main technical result Theorem~\ref{theorem:abstract} and use it to prove Theorem~\ref{theorem:axiomatic} in \S\ref{subsec:abstract:proof}. In \S\ref{sec:abstract:markoff} we prove Proposition~\ref{prop:markoff:mink} using a technical lemma Prop~\ref{prop:abstract:markoff}.

\subsection{Notation}\label{subsec:notation} We recall notation: let $m, n \in  \N$, and let $d = m+n$, so $\R^d = \R^m \times \R^n$. Recall that  $p_1: \R^d \rightarrow \R^m$ and $p_2: \R^d \rightarrow \R^n$ are the associated projections, and that we say that vectors in the kernel of $p_2$ are horizontal and those in the kernel of $p_1$ are vertical. We let $\| \cdot \| = \| \cdot \|_{m,n}$ be the norm on $\R^d$ given by $\| \vv \| := \max (\|p_1(\vv)\|_2, \|p_2(\vv)\|_2)$ where $\|\cdot \|_2$ denotes the standard Euclidean norm. Let $\Lambda \subset \R^d \minuszero = \R^m \times \R^n$ be discrete and without vertical vectors.  We define
$$\alpha_1(\Lambda) : = \sup_{\vv \in \Lambda} \frac{1}{\|\vv\|}$$
\noindent We define the set $\Exp(\Lambda) = \Exp_{m, n} (\Lambda)$ of $(m,n)$-\emph{exponents} of $\Lambda$ as the set of exponents of $\Pi_{m,n}(\Lambda)$, where $\Pi_{m,n}: \R^d \rightarrow \R^2$ is given by $\Pi_{m,n}(\vv) = (p_1(\vv), p_2(\vv))$. We will assume that this set is non-empty. Let $$\mu(\Lambda) = \mu_{m,n}(\Lambda) = \sup \Exp(\La).$$
\noindent Let $g_t$ and $h_B$ be as in (\ref{eq:gt:def}) and (\ref{eq:h:def}) respectively, and for $t, s>0$, let  $$\beta_s(\La) = \sup_{ h \in \B_s} \log \alpha_1(h \Lambda),$$ where $\B_s : = \{h_B: \|B\|_2 \le s\},$ and let
$\gamma_t(\La) = \log \alpha_1 (g_t \La).$
Let \begin{eqnarray}\label{eq:beta:la:def}\beta(\La) &=& \limsup_{s \rightarrow \infty} \frac{\beta_s(\La)}{\log s}  \\ \nonumber \gamma(\La) &=& \limsup_{t \rightarrow \infty} \frac{\gamma_t(\La)}{t}\end{eqnarray}

\begin{Theorem}\label{theorem:abstract} Fix notation as above. Let $\La \subset \R^d\minuszero$ be discrete and without vertical vectors, and $\beta = \beta(\La)$, $\gamma = \gamma(\La)$.

\begin{enumerate}

\item For all $\nu \in \Exp(\La)$, $\beta \geq 1-\frac{1}{\nu}$.

\item Suppose $\beta>0$. Then  $\left(1, \frac{1}{1-\beta}\right) \subset \Exp (\La)$.

\item For all $\nu \in \Exp(\La)$, $\gamma \geq \frac{m}{d}-\frac{1}{\nu}$

\item Suppose $\gamma \geq 0$. Then $\left(0, \frac{1}{\frac{m}{d}-\gamma}\right) \subset \Exp (\La)$.

\end{enumerate}

\end{Theorem}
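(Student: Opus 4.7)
The plan is to prove the four parts by exploiting two elementary identities for the coordinate action:
$$h_B \vv = \begin{pmatrix} p_1(\vv) \\ Bp_1(\vv) + p_2(\vv)\end{pmatrix}, \qquad g_t\vv = \begin{pmatrix} e^{nt/d} p_1(\vv) \\ e^{-mt/d} p_2(\vv)\end{pmatrix}.$$
Parts (1) and (3) go forward, producing an explicit short vector in $h_B\La$ (respectively $g_t\La$) from a sequence $\vv_k$ witnessing $\nu\in\Exp(\La)$. Parts (2) and (4) go backwards, reading off a Diophantine approximation from a short vector witnessing a large value of $\beta_{s_k}$ (respectively $\gamma_{t_k}$).

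For (1), I would use the rank-one matrix $B_k = -p_2(\vv_k)p_1(\vv_k)^T/\|p_1(\vv_k)\|_2^2$, which satisfies $B_k p_1(\vv_k) = -p_2(\vv_k)$ and has operator norm $\|p_2(\vv_k)\|_2/\|p_1(\vv_k)\|_2$. Then $h_{B_k}\vv_k = (p_1(\vv_k), 0)$ has norm $\|p_1(\vv_k)\|_2$, and $h_{B_k}\in\B_{s_k}$ for $s_k := \|p_2(\vv_k)\|_2/\|p_1(\vv_k)\|_2$. Substituting the Diophantine bound for $\|p_1(\vv_k)\|_2$ into $\beta_{s_k}/\log s_k \ge (-\log\|p_1(\vv_k)\|_2)/\log s_k$ yields $(\nu-1)/\nu$ in the limit. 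For (3), I would balance the two blocks of $g_t\vv_k$ by taking $t_k = \log(\|p_2(\vv_k)\|_2/\|p_1(\vv_k)\|_2)$, at which $\|g_{t_k}\vv_k\| = \|p_1(\vv_k)\|_2^{m/d}\|p_2(\vv_k)\|_2^{n/d}$; setting $r_k = -\log\|p_1(\vv_k)\|_2/\log\|p_2(\vv_k)\|_2 \ge \nu-1-o(1)$, the ratio $\gamma_{t_k}/t_k$ becomes the rational function $f(r) = ((m/d)r - n/d)/(r+1)$ evaluated at $r_k$; since $f$ is increasing with $f(\nu-1) = m/d - 1/\nu$, the bound follows.

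For the converse in (2), fixing $\nu\in(1, 1/(1-\beta))$ and $\epsilon > 0$ small with $1-1/\nu+\epsilon<\beta$, the definition of $\beta$ supplies $s_k\to\infty$, $B_k\in\B_{s_k}$ and $\vv_k\in\La$ with $\|h_{B_k}\vv_k\| < s_k^{-(1-1/\nu+\epsilon)}$. Reading off the two blocks gives $\|p_1(\vv_k)\|_2\le s_k^{-(1-1/\nu+\epsilon)}$ and, via $\|B_k\|_2\le s_k$, $\|p_2(\vv_k)\|_2 \le 2 s_k^{1/\nu-\epsilon}$. These combine to $\|p_1(\vv_k)\|_2\cdot\|p_2(\vv_k)\|_2^{\nu-1} \le 2^{\nu-1} s_k^{-\nu\epsilon} \to 0$, i.e., eventually $\|p_1(\vv_k)\|_2 \le \|p_2(\vv_k)\|_2^{-(\nu-1)}$. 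Part (4) is parallel: from $\|g_{t_k}\vv_k\|<e^{-(\gamma-\epsilon)t_k}$ one bounds $\|p_1(\vv_k)\|_2 < e^{-(n/d+\gamma-\epsilon)t_k}$ and $\|p_2(\vv_k)\|_2 < e^{(m/d-\gamma+\epsilon)t_k}$, and choosing $\epsilon$ small makes the exponent of $\|p_1(\vv_k)\|_2\|p_2(\vv_k)\|_2^{\nu-1}$ in $t_k$ negative. The range $(0, 1/(m/d-\gamma))$ of admissible $\nu$ arises precisely from the threshold at which this choice of $\epsilon$ becomes impossible.

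The one subtlety common to (2) and (4) is that the definition of an exponent requires $\|p_2(\vv_k)\|_2\to\infty$ along the witnessing sequence. In both arguments one extracts $\|p_1(\vv_k)\|_2\to 0$ from the short-vector hypothesis; since $\La$ has no vertical vectors the $\vv_k$ cannot stabilize to a single vector, and local finiteness of $\La$ then forces $\|\vv_k\|\to\infty$, whence $\|p_2(\vv_k)\|_2\to\infty$. I expect this bookkeeping to be the only real point requiring care; the rest reduces to the two displayed identities, the rank-one construction of $B$ in (1), and a direct calculation of exponents in (2)--(4).
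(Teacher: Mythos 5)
Your proposal is correct and tracks the paper's argument step by step: the rank-one $B_k = -p_2(\vv_k)p_1(\vv_k)^T/\|p_1(\vv_k)\|^2$ in (1) is exactly the matrix whose rows are given by (\ref{eq:row:def}), the $(\nu,\epsilon)$-bookkeeping in (2) is an equivalent reparametrization of the paper's use of an intermediate exponent $\eta<\nu$ in Claim~\ref{claim:theorem:2}, and the discreteness argument forcing $\|p_2(\vv_k)\|\to\infty$ is the same. The only cosmetic difference is that for (3) and (4) you compute directly in $\R^d$ with the balancing time $t_k=\log(\|p_2(\vv_k)\|/\|p_1(\vv_k)\|)$, whereas the paper first passes through $\Omega=\Pi_{m,n}(\La)\subset\R^2$ and invokes the two-dimensional Lemma~\ref{claim:prop:3} (Kleinbock's lemma) via the equivariance $\Pi_{m,n}(g_t\vv)=a_t\Pi_{m,n}(\vv)$; the calculus is identical.
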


\subsection{Proof of Theorem~\ref{theorem:axiomatic}}\label{subsec:abstract:proof} We show how Theorem~\ref{theorem:axiomatic} follows from Theorem~\ref{theorem:abstract}. We require the following analogue of Lemma~\ref{lemma:exp:2}:

\begin{lemma}\label{lemma:exp:d} Let $\La \subset \R^d\minuszero$ be discrete and satisfy the Minkowski condition. That is, suppose there is a $c >0$ so that for all convex, centrally symmetric sets $K \subset \R^d$ with volume at least $c$, $K \cap \La$ is non-empty. Then $$\mu_{m,n}(\La) \geq \frac{d}{m}$$\end{lemma}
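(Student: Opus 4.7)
\textbf{Proof plan for Lemma~\ref{lemma:exp:d}.} The plan is to exhibit the exponent $\nu = d/m$ directly by applying the Minkowski hypothesis to a carefully tuned family of centrally symmetric boxes, then using discreteness plus the ``no vertical vectors'' standing assumption to force the resulting witnesses to escape to infinity in the $p_2$-direction.

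First, for each $T > 0$, I would introduce the centrally symmetric convex box
\[
K_T \;=\; \bigl\{\vv \in \R^d : \|p_1(\vv)\|_2 \le r_T,\ \|p_2(\vv)\|_2 \le T \bigr\},
\]
whose volume is $a_m a_n\, r_T^{\,m}\, T^{\,n}$. I choose
\[
r_T \;=\; C' T^{-n/m}, \qquad C' \;=\; \Bigl(\tfrac{2c}{a_m a_n}\Bigr)^{1/m},
\]
so that $\operatorname{vol}(K_T) = 2c > c$. Thus the Minkowski condition yields a nonzero vector $\vv_T \in \Lambda \cap K_T$ satisfying $\|p_1(\vv_T)\|_2 \le C' T^{-n/m}$ and $\|p_2(\vv_T)\|_2 \le T$.

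Next I would verify that $\|p_2(\vv_T)\|_2 \to \infty$ as $T \to \infty$. If not, there is a subsequence $T_k \to \infty$ along which $\|\vv_{T_k}\|$ stays bounded (since $\|p_1(\vv_{T_k})\|_2 \le C' T_k^{-n/m} \to 0$ and $\|p_2(\vv_{T_k})\|_2$ is bounded by assumption). By discreteness of $\Lambda$, only finitely many vectors fit, so some fixed $\vv \in \Lambda$ would satisfy $\|p_1(\vv)\|_2 \le C' T_k^{-n/m}$ for arbitrarily large $T_k$; this forces $p_1(\vv) = 0$, i.e.\ $\vv$ vertical, contradicting the hypothesis that $\Lambda$ has no vertical vectors (this is the one place the no-vertical-vectors hypothesis enters, which I take to be standing in the setup of $\mu_{m,n}$).

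Finally, since $T \ge \|p_2(\vv_T)\|_2$, the bound on $\|p_1(\vv_T)\|_2$ upgrades to
\[
\|p_1(\vv_T)\|_2 \;\le\; C' \,\|p_2(\vv_T)\|_2^{-n/m}.
\]
Recalling that $\nu \in \Exp(\Lambda)$ requires a sequence $\vv_k \in \Lambda$ with $\|p_2(\vv_k)\|_2 \to \infty$ and $\|p_1(\vv_k)\|_2 \le C \|p_2(\vv_k)\|_2^{-(\nu-1)}$, the displayed inequality with $\nu - 1 = n/m$, i.e.\ $\nu = d/m$, furnishes exactly such a sequence. Hence $d/m \in \Exp(\Lambda)$ and therefore $\mu_{m,n}(\Lambda) \ge d/m$.

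The main (and essentially only) obstacle is the escape step: verifying that the Minkowski-produced vectors cannot all cluster in a bounded region. This is a short but essential use of both discreteness and the no-vertical-vectors hypothesis; the rest is a volume calculation calibrated so that the exponent $d/m$ falls out of the natural scaling $r_T \sim T^{-n/m}$.
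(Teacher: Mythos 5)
Your proof is correct and follows essentially the same strategy as the paper's: apply the Minkowski hypothesis to a one-parameter family of centrally symmetric boxes of constant volume, calibrated so that the $p_1$-radius scales like $T^{-n/m}$ while the $p_2$-radius is $T$, extract a witness $\vv_T$ from each, and use discreteness together with the standing no-vertical-vectors hypothesis to force escape. If anything you are slightly more careful than the paper at the escape step: you directly establish $\|p_2(\vv_T)\|_2 \to \infty$, which is what the definition of $\Exp$ requires, whereas the paper's write-up only explicitly observes that $\|p_2(\vv_j)\|$ is bounded below and that the ratio $\|p_2(\vv_j)\|/\|p_1(\vv_j)\|$ tends to infinity.
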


\begin{proof} We will show $\frac{d}{m} \in \Exp(\La) = \Exp(\Pi_{m,n}(\La))$. Let $a_{k}$ denote the volume of the standard (Euclidean) unit ball in $\R^k$. Let $d$ be such that $d^m > \frac{c}{a_m a_n}$. For $j \in \N$, let
 $$K_j: = \{\vv \in \R^d: \|p_1(\vv)\| \le d/j, \|p_2(\vv)\| \le j^{\frac{m}{n}}\}.$$

\noindent $K_j$ is convex, centrally symmetric, and has volume $d^m a_m a_n > c$, so we can find
$$\vv_j \in \La \cap K_n.$$

\noindent By the definition of $K_j$, 
$$\|p_1(\vv_j)\| \le d/j \le d \|p_2(\vv_j)\|^{-\frac{n}{m}} =d \|p_2(\vv_j)\|^{-(\frac{d}{m}-1)}.$$

\noindent Since $\La$ is discrete, and $\|p_1(\vv_j)\| \rightarrow 0$, $\|p_2(\vv_j)\|$ must be bounded below, so $\frac{\|p_2(\vv_j)\|}{\|p_1(\vv_j)\|} \rightarrow \infty$. So we have $\frac{d}{m} \in \Exp(\La)$, as desired.\end{proof}
\medskip
\noindent Now let $X$ be an $SL(d, \R)$-Mahler-Minkowski system. Given $x \in X$ so that $\La_x$ does not have vertical vectors, the Minkowski condition and Lemma~\ref{lemma:exp:d} shows that $\frac{d}{m} \in \Exp(\La_x)$. Part (1) of Theorem~\ref{theorem:abstract} then shows that $\beta(x) \geq 1- \frac{1}{\mu(x)} > 0$ (since $\mu(x) \geq \frac{d}{m} >1$), and combining this with part (2), we have $$\beta(x) = 1- \frac{1}{\mu(x)}.$$ Similarly, combining part (3) and part (4) yields $$\gamma(x) = \frac{m}{d} - \frac{1}{\mu(x)},$$ completing the proof of Theorem~\ref{theorem:axiomatic}.\qed\medskip

\subsection{Abstract Markoff constants}\label{sec:abstract:markoff} With notation as in \S\ref{subsec:notation}, define $$\sigma_s(\La) : = \sup_{b \in \B_s} \alpha_1(h_b \Lambda) = e^{\beta_s(\La)} ,$$ and $$\sigma(\La) : = \limsup_{s \rightarrow \infty} \frac{\sigma_s(\La)}{s^{\frac{n}{d}}}.$$ 
\noindent It is important to note that $\sigma(\La)$ does depend on our choice of norm. As in \S\ref{sec:markoff:minkowski}, given discrete $\Omega \in \R^2$ without vertical vectors, denote the $(m,n)$-Markoff constant by $\tilde{c} (\Omega)$, and define the $(m,n)$-\emph{Markoff constant} of $\La$ by $\tilde{c} (\La) = \tilde{c}(\Pi_{m,n}(\La))$. This is the infimum of the set of $\tilde{c} >0$ so that there exist a sequence $\{\vv_k\} \subset \Lambda$ with $$\|p_1(\vv_k)\| \le \tilde{c} \|p_2(\vv_k)\|^{-\frac{n}{m}}.$$

\noindent Now suppose $\La$ satisfies the Minkowski property, that is, there is an upper bound on the volumes of convex centrally symmetric sets in $\R^d$ that do not intersect $\La$. Let $c(\La)$, the \emph{Minkowski constant} of $\La$, be the supremum of the volumes of such sets. As in \S\ref{sec:markoff:minkowski}, let $a_j$ denote the volume of the unit (Euclidean norm) ball in $\R^j$.

\begin{Prop}\label{prop:abstract:markoff} We have \begin{equation}\label{eq:markoff:minkowski} c(\La) \geq \tilde{c}(\La)^m a_m a_n.\end{equation} Furthermore, we have \begin{equation}\label{eq:sigma:markoff}\sigma(\La) = \tilde{c}(\La)^{-\frac{m}{d}}.\end{equation} When $\tilde{c}(\La) = 0$ this is taken to mean $\sigma(\La) = \infty.$
\end{Prop}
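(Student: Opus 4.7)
The two assertions share a common source: the ``anisotropic box''
$$K_R := \{\vv \in \R^d : \|p_1(\vv)\|_2 \leq \tilde{c} R^{-n/m},\ \|p_2(\vv)\|_2 \leq R\}$$
is convex, centrally symmetric, has volume $\tilde{c}^m a_m a_n$ independent of $R$, and its defining ``corner'' $\|p_1(\vv)\|_2 \cdot \|p_2(\vv)\|_2^{n/m} = \tilde{c}$ is exactly the Markoff inequality. To prove (\ref{eq:markoff:minkowski}), I would fix any $\tilde{c} < \tilde{c}(\La)$ and show that $K_R \cap \La = \emptyset$ for all sufficiently large $R$. Otherwise one could extract $\vv_R \in K_R \cap \La$ along $R \to \infty$; since $\|p_2(\vv_R)\|_2 \leq R$ gives $R^{-n/m} \leq \|p_2(\vv_R)\|_2^{-n/m}$, one has $\|p_1(\vv_R)\|_2 \leq \tilde{c}\|p_2(\vv_R)\|_2^{-n/m}$. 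Discreteness of $\La$ combined with the absence of vertical vectors then forces $\|p_2(\vv_R)\|_2 \to \infty$ along a subsequence, yielding a Markoff witness for $\tilde{c}$, contradicting $\tilde{c} < \tilde{c}(\La)$. Thus $c(\La) \geq \tilde{c}^m a_m a_n$, and letting $\tilde{c} \nearrow \tilde{c}(\La)$ finishes the first assertion.

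For (\ref{eq:sigma:markoff}), the key linear-algebra computation is that $h_B\vv = (p_1(\vv),\, B p_1(\vv) + p_2(\vv))^T$, so in the chosen max-norm
$$\|h_B\vv\| = \max\bigl(\|p_1(\vv)\|_2,\ \|Bp_1(\vv) + p_2(\vv)\|_2\bigr).$$
A one-line rank-one argument then shows
$$\inf_{\|B\|_2 \leq s} \|h_B\vv\| = \max\bigl(\|p_1(\vv)\|_2,\ (\|p_2(\vv)\|_2 - s\|p_1(\vv)\|_2)_+\bigr),$$
because a $B$ of operator norm $\|p_2(\vv)\|_2/\|p_1(\vv)\|_2$ can send $p_1(\vv)$ to $-p_2(\vv)$, and this annihilation is admissible precisely when $s\|p_1(\vv)\|_2 \geq \|p_2(\vv)\|_2$. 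In particular $\sigma_s$ is essentially the supremum of $1/\|p_1(\vv)\|_2$ taken over $\vv \in \La$ with $\|p_2(\vv)\|_2 \leq s\|p_1(\vv)\|_2$.

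For the lower bound $\sigma(\La) \geq \tilde{c}(\La)^{-m/d}$, I would take a Markoff-realising sequence $\vv_k \in \La$ with $a_k := \|p_1\vv_k\|_2$, $b_k := \|p_2\vv_k\|_2$, $a_k b_k^{n/m} \to \tilde{c}(\La)$ and $b_k \to \infty$, and set $s_k := b_k/a_k$. Then the formula above gives $\sigma_{s_k}/s_k^{n/d} \geq a_k^{-1} s_k^{-n/d} = a_k^{-m/d} b_k^{-n/d} = (a_k b_k^{n/m})^{-m/d} \to \tilde{c}(\La)^{-m/d}$. For the matching upper bound I would argue by contradiction: if $\|h_{B_j}\vv_j\| < \eta s_j^{-n/d}$ along $s_j \to \infty$ with $\eta < \tilde{c}(\La)^{m/d}$, then $a_j < \eta s_j^{-n/d}$, while $b_j \leq \|h_{B_j}\vv_j\| + s_j a_j < \eta s_j^{-n/d}(1 + s_j)$, so
$$a_j b_j^{n/m} < \eta^{d/m}(1 + 1/s_j)^{n/m}.$$
Discreteness and absence of vertical vectors force $b_j \to \infty$ along a subsequence, whence $\liminf_j a_j b_j^{n/m} \leq \eta^{d/m} < \tilde{c}(\La)$, contradicting the characterisation $\tilde{c}(\La) = \liminf_{b \to \infty} a b^{n/m}$.

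The main obstacle I expect is keeping the factor $(1 + 1/s)^{n/m}$ in the upper bound genuinely vanishing in the limit: replacing it by a crude bounded constant would only give $\sigma(\La) \leq 2^{n/d}\tilde{c}(\La)^{-m/d}$, and the extraneous $2^{n/d}$ would destroy equality. Once this sharp asymptotic is handled, the remainder is routine bookkeeping built around the box $K_R$, the explicit formula for $\inf_B\|h_B\vv\|$, and the discreteness argument promoting $\|p_1\|\to 0$ to $\|p_2\|\to\infty$.
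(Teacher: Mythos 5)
Your proposal is correct and follows essentially the same route as the paper: the box for \eqref{eq:markoff:minkowski} is the same anisotropic box (yours is just the contrapositive of the paper's Minkowski-property argument), and for \eqref{eq:sigma:markoff} your rank-one $B$ annihilating $p_2(\vv)$ with $\|B\|=\|p_2(\vv)\|/\|p_1(\vv)\|$ is exactly the matrix the paper builds in \eqref{eq:row:def}, with the same algebra producing the $c^{-m/d}s_k^{n/d}$ lower bound and the same $\left(1+s_k^{-1}\right)^{n/m}\to 1$ resolution of the upper bound. Your explicit closed form $\inf_{\|B\|\le s}\|h_B\vv\|=\max\bigl(\|p_1(\vv)\|,\,(\|p_2(\vv)\|-s\|p_1(\vv)\|)_{+}\bigr)$ is a clean repackaging, not present verbatim in the paper, but it carries the same content; and the ``obstacle'' you flag about the $(1+1/s)^{n/m}$ factor is already dispatched by your own observation that $s_j\to\infty$.
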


\medskip

\noindent\textbf{Proof of Proposition~\ref{prop:markoff:mink}:} Let $X$ be a Mahler-Minkowski system. Then applying Proposition~\ref{prop:abstract:markoff} to $\La_x$ yields Proposition~\ref{prop:markoff:mink}.\qed

\section{Hyperbolic surfaces}\label{sec:hyp} As discussed in \S\ref{subsec:dev}, we will describe how to associate a discrete subset in $\R^2\minuszero$ to each cusp of $X = SL(2, \R)/\Gamma$, where $\Gamma$ is a non-uniform lattice in $SL(2, \R)$. Let $\rho$ denote the contragredient representation of $SL(2, \R)$ on $\R^2$, that is, for $\vv \in \R^2$, $\rho(g) \vv = (g^{-1})^T\vv$. Let $\Delta \subset \Gamma$ be a maximal parabolic subgroup, and let $\vv_0 \in \R^2\minuszero$ be such that $\rho(\Delta) \vv_0  = \vv_0$. Let $X_{\Delta} \subset X$ denote the cusp corresponding to $\Delta$. Fix $x_0 \in X$, and given $x \in X$, define
\begin{equation}\label{eq:delta:dist:def}
d_{\Delta}(x, x_0) =
\left\{ \begin{array}{ll}  d(x, x_0) & x \in X_{\Delta} \\ 0 & \mbox{otherwise}\end{array}\right.
\end{equation}
\noindent and, writing $x = g\Gamma$, define $\La_{x, \Delta} := \rho(g\Gamma)\vv_0 = \{ \rho(g\gamma)\vv_0: \gamma \in \Gamma\}$. Our main lemma is a `quantitative Mahler' condition for $\La_{x, \Delta}$:

\begin{lemma}\label{lemma:quant:hyp:mahler} Fix notation as above. Let $\{x_n\}_{n = 1}^{\infty} \subset X$ be such that $$d_{\Delta}(x_n, x_0) \rightarrow \infty.$$ Then \begin{equation}\label{eq:quant:hyp:mahler} \lim_{n \rightarrow \infty} \frac{d_{\Delta}(x_n, x_0)}{2 \log \alpha_1(\La_{x_n, \Delta})} = 1.\end{equation}
\end{lemma}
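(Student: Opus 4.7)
The plan is to reduce to a normalized situation and establish a direct geometric identification between norms of vectors in $\La_{x,\Delta}$ and hyperbolic depths of an appropriate lift inside horoballs around the $\Gamma$-translates of the cusp. By conjugating in $SL(2,\R)$, I may assume $\Delta$ is the standard upper-triangular unipotent subgroup generated by $\bigl(\begin{smallmatrix} 1 & 1 \\ 0 & 1 \end{smallmatrix}\bigr)$, so it fixes $\infty \in \partial\h^2$ under M\"obius and, correspondingly, $\vv_0 = e_2 \in \R^2$ under $\rho$. Let $H := \{z \in \h^2 : \operatorname{Im}(z) \geq 1\}$ be the standard horoball at $\infty$, with ``signed depth'' of $z$ in a horoball $H'$ taken to be positive on the interior (distance down to $\partial H'$) and negative outside. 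Starting from the elementary identity $\operatorname{Im}(M\cdot i) = \|M^T e_2\|^{-2}$ for $M \in SL(2,\R)$, a direct calculation yields the key formula: for all $g', h \in SL(2,\R)$, the signed depth of $g'\cdot i$ inside $h\cdot H$ equals $-2\log\|g'^T \rho(h)\vv_0\|$.

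Applying this with $g' = g_n^{-1}$ and $h = \gamma \in \Gamma$, and using $(g_n^{-1})^T = \rho(g_n)$, gives the signed depth of $g_n^{-1}\cdot i$ inside $\gamma\cdot H$ as $-2\log\|\rho(g_n\gamma)\vv_0\|$. Taking the supremum over $\gamma \in \Gamma/\Delta$ (the orbit vector depends only on the coset since $\rho(\Delta)\vv_0 = \vv_0$) yields
\[
2\log\alpha_1(\La_{x_n,\Delta}) \;=\; D(g_n^{-1}\cdot i), \qquad D(z) := \sup_{\gamma\in\Gamma/\Delta}\bigl(\text{signed depth of } z \text{ in } \gamma\cdot H\bigr).
\]
Next I would invoke the natural projection $\pi : X \to \h^2/\Gamma$, which is forced by well-definedness on the quotient to send $g\Gamma \mapsto \Gamma\cdot(g^{-1}\cdot i)$ (this is the reason $g_n^{-1}$ rather than $g_n$ appears in the identity). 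Standard reduction theory allows us to choose $y_0 > 1$ so that the translates $\gamma\{\operatorname{Im}\geq y_0\}$, $\gamma\in\Gamma/\Delta$, are pairwise disjoint and project to an embedded horoball neighborhood of the $\Delta$-cusp in $\h^2/\Gamma$, using that $\Gamma$ is a non-uniform lattice and $\Delta$ a maximal parabolic. For $x_n$ deep in $X_\Delta$, $g_n^{-1}\cdot i$ lies inside exactly one such embedded horoball, and elementary hyperbolic geometry shows that $d(\pi(x_n),\pi(x_0))$ differs from the depth of $g_n^{-1}\cdot i$ in that horoball by an amount bounded uniformly in terms of $x_0$, $y_0$, and the finite length of the boundary horocycle. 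Thus $d_\Delta(x_n, x_0) = D(g_n^{-1}\cdot i) + O(1)$.

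Combining the two displayed relations, $d_\Delta(x_n, x_0) - 2\log\alpha_1(\La_{x_n,\Delta}) = O(1)$, so as $d_\Delta(x_n,x_0)\to\infty$ the ratio tends to $1$. The step I expect to be the main obstacle is the cusp-distance identification: verifying that for $g_n^{-1}\cdot i$ deep in a chosen embedded horoball the hyperbolic distance $d(\pi(x_n),\pi(x_0))$ genuinely equals the depth plus a bounded term requires controlling the position along the closed boundary horocycle (a circle of finite length) and the position of $\pi(x_0)$ relative to the cusp boundary. Each sub-step is classical, but the argument must be assembled carefully so that the error term is honestly $O(1)$ and the asymmetry between $g_n$ and $g_n^{-1}$ (which is the ultimate source of the identity) is bookkept consistently throughout.
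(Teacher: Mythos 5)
Your proof is correct and follows the same strategy as the paper: conjugate so that $\Delta$ is the standard integer parabolic, and identify $2\log\alpha_1(\La_{x,\Delta})$ with the depth of the projection of $x$ into the cusp, the engine being the identity $\operatorname{Im}(M\cdot i)=\|M^{T}e_{2}\|^{-2}$ combined with $SO(2)$-invariance. If anything, your write-up is more careful than the paper's terse Iwasawa-decomposition argument about the $g\Gamma\mapsto \Gamma\cdot(g^{-1}\cdot i)$ projection convention and about why, deep in the cusp, the shortest $\Gamma$-orbit vector is realized by the unique embedded horoball containing the lift; you also correctly take $\vv_0=e_2$, whereas the paper's stated $\vv_0=(1,0)^{T}$ is a typo (it is not $\rho(\Delta)$-fixed for the upper-triangular $\Delta$ the paper writes down).
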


\begin{proof} Conjugating if necessary, we assume $$\Delta = \left\{ \left(\begin{array}{cc}1 & m \\0 & 1\end{array}\right):  m \in \Z \right\}.$$ Let $\vv_0 : = \left(\begin{array}{c}1 \\0\end{array}\right)$, so $\rho(\Delta) \vv_0 = \vv_0$. Note that if we use the Euclidean norm on $\R^2\minuszero$, $\alpha_1(\rho(g\Gamma)\vv_0)$ is an $K = SO(2)$-invariant function on $X$, since $\rho(K)$ acts via isometries on $\R^2$. $\h^2/\Gamma$ = $K \backslash X$ is foliated by translates of the closed horocycle orbit corresponding to $\delta$. To understand how far $x = g\Gamma$ is into the cusp corresponding to $\Delta$, we calculate the length of the closed horocycle it (or a $K$-translate of it) is on. 

Let $x_0$ be the identity coset, i.e. $x_0 = e \Gamma$, where $e \in SL(2, \R)$ is the identity element. By our construction of $\Delta$, $\{g_{-t} x_0\}_{t \geq 0}$ is divergent in $X$ (going into the cusp corresponding to $\Delta$), with $d(g_{-t} x_0, x_0) = t$, and so $d_{\Delta}(g_{-t} x_0, x_0) = t$. Note that the length of the closed horocycle that $g_{-t} x_0$ is on is $e^{-t}$, since $$g_{-t} \left(\begin{array}{cc}1 & 1 \\0 & 1\end{array}\right) g_t =  \left(\begin{array}{cc}1 & e^{-t} \\0 & 1\end{array}\right),$$ and the shortest vector in $\rho(g_{-t}\Gamma) \vv_0$ is given by $\rho(g_{-t} )\vv_0$, so $$\alpha_1(\rho(g_{-t}\Gamma)\vv_0) = e^{\frac{t}{2}}.$$ Write $x_n = k_n p_n \Gamma$, where $k_n \in K$, and $p_n$ is upper triangular. Since both $d$ and $\alpha_1$ are $K$-invariant, we can assume $k_n = e$, so $x_n = p_n \Gamma$. Since $d_{\Delta}(x_n, x_0) \rightarrow \infty$, we can write $$p_n =  \left(\begin{array}{cc}e^{-\frac{t_n}{2}} & b \\0 & e^{\frac{t_n}{2}}\end{array}\right),$$ with $t_n \rightarrow \infty$. We have that, as $n \rightarrow \infty$, $d_{\Delta}(x_n, x_0) \sim t_n$, and $\alpha_1(\rho(p_n\Gamma)\vv_0) = e^{\frac{t_n}{2}}$.
\end{proof}

\medskip

\noindent Now let $\Delta_1, \ldots \Delta_k$ denote the conjugacy classes of maximal parabolic subgroups of $\Gamma$. Given $I \subset \{1, \ldots, k\}$, and $x, y \in X$, define $$d_{I} (x, y) = \max_{i \in I} d_{\Delta_i}(x, y).$$ $d_{I}$ measures distance into the cusp(s) corresponding to $I$. For $j \in \{1, \ldots, k\}$, let $\vv_j \in \R^2\minuszero$ be such that $\rho(\Delta_j) \vv_j = \vv_j$, and for $x \in X$, let $$\La_{x, I} : = \bigcup_{i \in I} \rho(g\Gamma) \vv_i.$$ Lemma~\ref{lemma:quant:hyp:mahler} yields the following:

\begin{Cor}\label{cor:quant:hyp:mahler} Fix $x_0 \in X$. Let $\{x_n\}_{n = 1}^{\infty} \subset X$ be such that $$d_{I}(x_n, x_0) \rightarrow \infty.$$ Then \begin{equation}\label{eq:quant:hyp:I:mahler} \lim_{n \rightarrow \infty} \frac{d_{I}(x_n, x_0)}{2 \log \alpha_1(\La_{x_n, I})} = 1.\end{equation}
\end{Cor}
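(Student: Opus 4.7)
\textbf{Proof proposal for Corollary~\ref{cor:quant:hyp:mahler}.} The plan is to reduce the statement to a single cusp and then invoke Lemma~\ref{lemma:quant:hyp:mahler}. The two key observations are that $\alpha_1(\La_{x, I}) = \max_{i \in I}\alpha_1(\La_{x, \Delta_i})$ (because the sup over a union of sets is the max of the sups) and that $d_I(x, x_0) = \max_{i \in I} d_{\Delta_i}(x, x_0)$ by definition. Once we identify which index realizes each max, the Corollary should follow cusp-by-cusp from the Lemma.

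First, I would argue that it suffices to show the limit is $1$ along an arbitrary subsequence. Given such a subsequence, since the cusp regions $X_{\Delta_1}, \ldots, X_{\Delta_k}$ are pairwise disjoint and $d_{\Delta_i}$ vanishes outside $X_{\Delta_i}$, a further subsequence of $\{x_n\}$ lies in a single cusp region $X_{\Delta_{i^*}}$ with $i^* \in I$ (if not, all $d_{\Delta_i}(x_n, x_0)$ would be $0$ for some indices, contradicting $d_I(x_n, x_0) \to \infty$). Along this subsequence $d_I(x_n, x_0) = d_{\Delta_{i^*}}(x_n, x_0) \to \infty$, and Lemma~\ref{lemma:quant:hyp:mahler} gives $2\log \alpha_1(\La_{x_n, \Delta_{i^*}}) \sim d_{\Delta_{i^*}}(x_n, x_0)$.

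It remains to show that the maximum defining $\alpha_1(\La_{x_n, I})$ is realized (eventually) at $i = i^*$. Since $x_n$ stays in the fixed cusp region $X_{\Delta_{i^*}}$, for each other $i \in I$ the trajectory $\{x_n\}$ is bounded away from $X_{\Delta_i}$ (the cusps are separated by a positive distance and the injectivity radius is bounded below there). For such $i$, the quantity $\alpha_1(\La_{x_n, \Delta_i})$ remains bounded: if it did not, the contrapositive of the proof of Lemma~\ref{lemma:quant:hyp:mahler}, which explicitly identifies large values of $\alpha_1$ with descent into the cusp $\Delta_i$, would force $x_n$ into $X_{\Delta_i}$, a contradiction. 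Meanwhile $\alpha_1(\La_{x_n, \Delta_{i^*}}) \to \infty$. Therefore, for $n$ large, $\alpha_1(\La_{x_n, I}) = \alpha_1(\La_{x_n, \Delta_{i^*}})$, and
\[
\frac{d_I(x_n, x_0)}{2\log \alpha_1(\La_{x_n, I})} = \frac{d_{\Delta_{i^*}}(x_n, x_0)}{2\log \alpha_1(\La_{x_n, \Delta_{i^*}})} \longrightarrow 1
\]
by the Lemma. Since this holds along every subsequence with the same limit, the full sequence converges to $1$.

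The main obstacle is the uniform boundedness of $\alpha_1(\La_{x_n, \Delta_i})$ for $i \neq i^*$; this is a geometric (converse-of-Mahler) assertion which is not stated explicitly in the Lemma, but follows from its proof by tracing how the shortest vector of $\rho(g\Gamma)\vv_i$ encodes the length of the closed $\Delta_i$-horocycle through $x_n$. If one prefers to avoid the converse, one can instead absorb the boundedness into an elementary compactness argument using the fact that the complement of a neighborhood of $X_{\Delta_i}$ in $X$ has the lengths of $\Delta_i$-horocycles bounded below uniformly.
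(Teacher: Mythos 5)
The paper offers no written argument for this corollary—it simply states that Lemma~\ref{lemma:quant:hyp:mahler} ``yields'' it—so your reconstruction is exactly the kind of gap-filling the reader is expected to supply, and it is sound. The two reductions you make are the right ones: $\alpha_1(\La_{x,I}) = \max_{i\in I}\alpha_1(\La_{x,\Delta_i})$ since $\La_{x,I}$ is a union, and $d_I = \max_i d_{\Delta_i}$ by definition; and passing to subsequences in a fixed cusp region $X_{\Delta_{i^*}}$ is the correct way to take the max apart. You also correctly identify the one genuine extra ingredient: that $\alpha_1(\La_{x_n,\Delta_i})$ stays bounded for $i \neq i^*$ while $x_n$ escapes into $X_{\Delta_{i^*}}$. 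This does indeed follow from the geometric content of the proof of Lemma~\ref{lemma:quant:hyp:mahler}, which identifies $\alpha_1(\La_{x,\Delta_i})$ with (the inverse square root of) the length of the shortest closed $\Delta_i$-horocycle through $x$; since the cusp neighborhoods are pairwise disjoint, staying inside $X_{\Delta_{i^*}}$ keeps that $\Delta_i$-horocycle length bounded below. One small remark: you only need $\log\alpha_1(\La_{x_n,\Delta_i}) = o(d_I(x_n,x_0))$ for $i\neq i^*$, for which boundedness is more than enough, and you can get the inequality $\limsup \leq 1$ for free from $\alpha_1(\La_{x_n,I})\geq\alpha_1(\La_{x_n,\Delta_{i^*}})$ without any boundedness at all—so the content of the extra geometric claim is only needed for the lower bound $\liminf \geq 1$.
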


\subsection{Proof of Theorem~\ref{theorem:hyp:excursions}}\label{subsec:hyp:excursions} As discussed in \S\ref{subsubsec:hyp} we will in fact prove a more general statement about excursions. As above, let $I$ denote a subset of the cusps of $X$. We have

\begin{Claim}\label{claim:hyp:excursions}Fix $x_0 \in X$, and suppose $x$ is not cuspidal. Then 
$$\limsup_{|s| \rightarrow \infty} \frac{d_I(h_s x, x_0)}{\log |s|} = 1 + \limsup_{t \rightarrow \infty} \frac{d_I(g_t x, x_0)}{t} $$\end{Claim}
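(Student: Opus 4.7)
The plan is to reduce the identity to the abstract relation between $\beta$ and $\gamma$ given by Theorem~\ref{theorem:abstract} (with $m=n=1$, $d=2$) applied to the discrete set associated to $x$, using Corollary~\ref{cor:quant:hyp:mahler} to translate hyperbolic distances into $\log\alpha_1$ values. By $SL(2,\R)$-equivariance, $\La_{g x, I} = \rho(g)\La_{x, I}$, and the contragredient representation satisfies $\rho(g) = wgw^{-1}$ for the Weyl rotation $w = \left(\begin{smallmatrix} 0 & -1 \\ 1 & 0\end{smallmatrix}\right)$. Since $w$ acts isometrically on $(\R^2, \|\cdot\|)$ in the sup norm, setting $\La' := w^{-1}\La_{x, I}$ yields $\alpha_1(\La_{h_s x, I}) = \alpha_1(h_s\La')$ and $\alpha_1(\La_{g_t x, I}) = \alpha_1(g_t\La')$. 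The hypothesis that $x$ is not cuspidal translates (via Lemma~\ref{lemma:quant:hyp:mahler} applied to the geodesic flow) to the condition that $\La_{x, I}$ has no horizontal vectors, equivalently that $\La'$ has no vertical vectors, so Theorem~\ref{theorem:abstract} is applicable to $\La'$.

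Next, along any sequence realizing the limsup on either side of the claim in the regime where it is positive, the associated points of $X$ have $d_I\to\infty$, so Corollary~\ref{cor:quant:hyp:mahler} gives
\[
\limsup_{|s|\to\infty}\frac{d_I(h_s x, x_0)}{\log|s|} \;=\; 2\bar\beta(\La'), \qquad \limsup_{t\to\infty}\frac{d_I(g_tx, x_0)}{t} \;=\; 2\gamma(\La'),
\]
where $\bar\beta(\La') := \limsup_{|s|\to\infty}\log\alpha_1(h_s\La')/\log|s|$ is the pointwise version of $\beta(\La')$. A short argument identifies $\bar\beta(\La')$ with the ``ball-sup'' $\beta(\La')$ of Theorem~\ref{theorem:abstract} in the positive regime: one direction is trivial, while for the reverse one picks $B_n$ with $|B_n|\le s_n$ nearly attaining $\beta_{s_n}$, uses discreteness of $\La'$ to force $|B_n|\to\infty$ (otherwise $\beta_{s_n}$ would remain bounded since only finitely many $\vv \in \La'$ can produce a short $h_B\vv$ with $B$ in a bounded set), and concludes $\log\alpha_1(h_{B_n}\La')/\log|B_n| \ge \beta_{s_n}/\log s_n$.

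Finally I invoke Theorem~\ref{theorem:abstract} applied to $\La'$: combining parts (3)--(4) shows $\gamma(\La') \ge 0$ and that exponents $\nu$ approaching $1/(\tfrac12 - \gamma(\La'))$ lie in $\Exp(\La')$, so part (1) yields $\beta(\La') \ge \tfrac12 + \gamma(\La')$; conversely, parts (2) and (3) applied to exponents $\nu$ approaching $1/(1-\beta(\La'))$ give $\beta(\La') \le \tfrac12 + \gamma(\La')$. Hence $\beta(\La') = \tfrac12 + \gamma(\La')$, and in particular $\beta(\La') \ge 1/2 > 0$ so the positive-regime hypothesis above is automatic. Multiplying the resulting equality by $2$ gives LHS $= 2\beta(\La') = 1 + 2\gamma(\La') = $ RHS.

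The main obstacle I anticipate is the careful bookkeeping of conventions: the paper uses the contragredient $\rho$-action to define $\La_{x, I}$, whereas Theorem~\ref{theorem:abstract} is phrased in terms of the explicit subgroups $h_B$ and $g_t$, and the Weyl-rotation identity $\rho(g) = wgw^{-1}$ provides the cleanest translation. A secondary subtlety is the pointwise-vs-ball-sup identification for $\beta$, together with the degenerate boundary case $\gamma(\La') = 0$, in which part (4) of Theorem~\ref{theorem:abstract} alone forces $\beta(\La') = 1/2$ so that both sides of the claim evaluate consistently to $1$ rather than $0$.
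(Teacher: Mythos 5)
Your approach matches the paper's: both reduce the claim to Theorem~\ref{theorem:abstract} with $m=n=1$ via the quantitative Mahler relation of Corollary~\ref{cor:quant:hyp:mahler}, establish $\gamma(\La')\in[0,\tfrac12]$ from the geometric bounds $d(g_t y,y)\le t$ and $d(h_s y,y)\le 2\log s$, and then run parts (1)--(4) to get $\beta(\La')=\tfrac12+\gamma(\La')$; the explicit Weyl conjugation $\rho(g)=wgw^{-1}$ and the ball-sup versus pointwise identification of $\beta$ are steps the paper uses silently, and you are right to spell them out. One small slip: the phrase ``combining parts (3)--(4) shows $\gamma(\La')\ge 0$'' is backwards --- part (4) \emph{assumes} $\gamma\ge 0$ as a hypothesis, and that input has to come from the Corollary~\ref{cor:quant:hyp:mahler} identification $\gamma(\La')=\tfrac12\limsup_t d_I(g_tx,x_0)/t\ge 0$ (which you did already establish one paragraph earlier), exactly as in the paper.
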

\begin{proof} Let $$\beta(x) = \limsup_{|s| \rightarrow \infty} \frac{\log\alpha_1(\La_{h_s x, I})}{\log |s|}$$ and $$\gamma(x)  = \limsup_{t \rightarrow \infty} \frac{\log\alpha_1(\La_{g_t x, I})}{t}.$$ We would like to use Theorem~\ref{theorem:axiomatic} to conclude that $\beta(x) = \frac{1}{2} + \gamma(x)$, which, combined with Corollary~\ref{cor:quant:hyp:mahler} would give us our result. However, $\La_{x, I}$ satisfies the Minkowski condition only in the case when $\Gamma$ has one cusp (see \S\ref{sec:hyp:mink}). Thus, we will use Theorem~\ref{theorem:abstract}, and the geometric observation that for any $y \in X$, $d(h_s y, y) \le 2\log s$, and $d(g_t y, y) \le t$, and the fact that Corollary~\ref{cor:quant:hyp:mahler} guarantees that $\gamma(x) \in [0, \frac{1}{2}]$. Thus, by part (4) of Theorem~\ref{theorem:abstract}, $(0, 2) \subset \Exp(\La_{x, I})$, and so by part (1), $\beta \geq \frac{1}{2}$, allowing us to apply part (2). Combining these results, we obtain, as in the conclusion of Theorem~\ref{theorem:axiomatic}, $$\beta(x) =  \frac{1}{2} + \gamma(x)  =  1- \frac{1}{\mu(\La_{x, I})},$$ which, applying Corollary~\ref{cor:quant:hyp:mahler}, yields our result. Finally, note that the case $I = \{1, \ldots, k\}$ yields Theorem~\ref{theorem:hyp:excursions}. 
\end{proof}
\medskip

\subsubsection{One-cusped surfaces}\label{sec:hyp:mink} If $I = \{ 1,  \dots, k\}$, the full set of cusps, we can directly apply Theorem~\ref{theorem:axiomatic}. We write $\La_x$ to denote $\La_{x, \{1, 2, \ldots, k\}}.$ We claim for all $x \in X$, there is a $C >0$ such that $\La_x \cap M$ is non-empty for all convex, centrally symmetric $M$ of volume at least $C$. Using the transitivity of the $SL(2, \R)$-action and Lemma~\ref{lemma:john}, it suffices to show:

\begin{Prop}\label{prop:hyp:mink} There is a constant $R_0 >0$ so that for any $x \in X$, $$\La_{x} \cap B(0, R_0) \neq \emptyset.$$\end{Prop}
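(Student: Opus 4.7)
The plan is to prove the stronger statement that $\alpha_1(\La_x) := \sup_{\vv \in \La_x} \|\vv\|^{-1}$ is uniformly bounded below on all of $X$ by a positive constant $c$, from which the proposition follows by taking $R_0 = 1/c$. I decompose $X$ into a compact thick part and a finite union of disjoint ``deep'' cusp neighborhoods, and estimate $\alpha_1$ separately on each piece.

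For the decomposition, I would invoke the standard structure theory of non-uniform lattices in $SL(2,\R)$: for the finitely many conjugacy classes of maximal parabolic subgroups $\Delta_1,\ldots,\Delta_k$ and for $T$ sufficiently large, the sets $C_i(T) := \{x \in X : d_{\Delta_i}(x, x_0) > T\}$ are pairwise disjoint and their complement $K_T := X \setminus \bigcup_i C_i(T)$ is compact. On each $C_i(T)$ the inclusion $\La_{x, \Delta_i} \subset \La_x$ yields $\alpha_1(\La_x) \geq \alpha_1(\La_{x, \Delta_i})$, and Lemma~\ref{lemma:quant:hyp:mahler} applied with $\Delta = \Delta_i$ gives the asymptotic $2 \log \alpha_1(\La_{x, \Delta_i}) \sim d_{\Delta_i}(x, x_0)$. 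Taking $T$ large enough to make this effective, we get $\alpha_1(\La_x) \geq e^{T/3}$ (say) uniformly on $C_i(T)$.

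On the compact piece $K_T$, I would argue that $\alpha_1$ is lower semicontinuous and strictly positive, hence attains a positive minimum $\epsilon > 0$. Positivity is immediate since each $\La_x \supset \{\rho(g)\vv_i\}$ is non-empty. For lower semicontinuity, fix any vector $\rho(g_0\gamma)\vv_i \in \La_{x_0}$ and extend it continuously as $x = g\Gamma$ varies near $x_0 = g_0 \Gamma$; this shows $\liminf_{x \to x_0} \alpha_1(\La_x) \geq 1/\|\rho(g_0\gamma)\vv_i\|$, and taking the supremum over $\gamma$ and $i$ gives $\liminf_{x \to x_0} \alpha_1(\La_x) \geq \alpha_1(\La_{x_0})$. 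Combining, $\alpha_1(\La_x) \geq \min(\epsilon, e^{T/3}) > 0$ for every $x \in X$, and we may take $R_0$ to be the reciprocal of this minimum.

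Most of the substantive work has already been done by Lemma~\ref{lemma:quant:hyp:mahler}, which controls the cuspidal direction; the remaining step is the compact-part argument, whose only subtlety is the lower semicontinuity of $\alpha_1$. This is a standard issue for functions defined as suprema of locally continuous families of vectors in a discrete set, and is handled cleanly by noting that any bounded region of $\R^2$ contains only finitely many vectors of $\La_x$, so the relevant local supremum is in fact a maximum over finitely many continuous sections.
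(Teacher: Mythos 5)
Your proof is correct and follows essentially the same strategy as the paper: decompose $X$ into a compact thick part and cuspidal neighborhoods, use Lemma~\ref{lemma:quant:hyp:mahler} to control $\alpha_1$ when $x$ is deep in a cusp (where $\La_{x,\Delta_i} \subset \La_x$ supplies a short vector), and invoke compactness for the thick part. The only difference is one of detail: the paper phrases the compact-part step in terms of the length of the shortest closed horocycle being bounded on the thick part, while you spell out the equivalent lower semicontinuity argument for $\alpha_1$ directly.
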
 

\begin{proof} The proof of Lemma~\ref{lemma:quant:hyp:mahler} shows that the shortest vector in $\La_{x, I}$ corresponds to the shortest closed horocycle (based at the cusps corresponding to $I$) that the point $Kg\Gamma \in \h^2/\Gamma$ is on. If $I = \{1, \ldots, k\}$, this is bounded above, since the complement of the cusps is compact.

We also note (as C.~Judge pointed out to us) that if $I$ is a proper subset of $\{1, \ldots, k\}$, this statement is false. For a point deep in a cusp in $I^c$, the shortest closed horocycle from $I$ that $x$ is on (which will be the shortest vector in $\La_{x, I}$) can be made arbitrarily large.
\end{proof}

\medskip

\section{Diophantine exponents of discrete sets}\label{sec:diophexp}

In this section we prove our main abstract results Theorem~\ref{theorem:abstract} and Proposition~\ref{prop:abstract:markoff}. We first recall, in \S\ref{subsec:diag:flows2}, a general lemma from~\cite{Kleinbock:GAFA} on diagonal flows on $\R^2$, and then prove the main theorem (\S\ref{subsec:proof:theorem:abstract}. The strategy in the proofs is to use approximates to find orbit points at which we the associated discrete set has a short vector of an appropriate length, and vice versa, when we know there are specified orbit points with short vectors, to derive the existence of a sequence of approximates. We use a similar strategy, with more delicate estimates, along with some basic convex geometry, to prove Proposition~\ref{prop:abstract:markoff} in \S\ref{subsec:abstract:markoff:proof}.

\subsection{Diagonal flows on $\R^2$}\label{subsec:diag:flows2} We recall a general lemma (\cite[Lemma 2.1]{Kleinbock:GAFA}) on diagonal flows on $\R^2$ that will be used in the proof of both Proposition~\ref{prop:dioph:2} and Theorem~\ref{theorem:abstract}. We fix notation: let $\Omega \subset \R^2\minuszero$ be discrete and without vertical vectors, and let $m, n \in \N$, and $d = m +n$. Let  
\begin{equation}\label{eq:at}
a_t : = \left(\begin{array}{cc}e^{\frac{n}{d} t} & 0 \\0 & e^{-\frac{m}{d} t} \end{array}\right)
\end{equation}
\noindent Note that $\{a_t\}_{ t \in \R}$ is a subgroup of $GL(2, \R)$. Let 
\begin{equation}\label{eq:gamma:mn}\gamma_{m,n}(\Omega) = \limsup_{t \rightarrow \infty} \frac{\log(\alpha_1(a_t \Omega))}{t}\end{equation}

\begin{lemma}\label{claim:prop:3} $$\gamma_{m,n}(\Omega) = \frac{m}{d} -\frac{1}{\mu(\Omega)}.$$\end{lemma}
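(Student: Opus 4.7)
The plan rests on one bare calculation: for any $\vv = (x,y)$ with $x,y\neq 0$,
$$\|a_t\vv\| = \max\bigl(e^{\tfrac{n}{d}t}|x|,\, e^{-\tfrac{m}{d}t}|y|\bigr),$$
which, viewed as a function of $t$, is minimized at $t_*=\log(|y|/|x|)$, where it equals $|x|^{m/d}|y|^{n/d}$. So a single vector $\vv$ of $\Omega$ produces, at a single well-chosen time $t_*(\vv)$, a short vector in $a_{t_*}\Omega$ whose length is controlled entirely by the ratio $|y|/|x|$. This is the mechanism converting Diophantine exponents into cusp excursions, and vice-versa.

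For the inequality $\gamma_{m,n}(\Omega)\geq \tfrac{m}{d}-\tfrac{1}{\mu(\Omega)}$, I would fix $\nu\in\Exp(\Omega)$ and the associated sequence $\vv_j=(x_j,y_j)\in\Omega$ with $|y_j|\to\infty$ and $|x_j|\leq C|y_j|^{-(\nu-1)}$. Set $t_j=\log(|y_j|/|x_j|)\to\infty$ and note
$$\log\alpha_1(a_{t_j}\Omega)\geq -\log\|a_{t_j}\vv_j\| = -\tfrac{m}{d}\log|x_j|-\tfrac{n}{d}\log|y_j|.$$
Dividing by $t_j=\log|y_j|-\log|x_j|$ and substituting $\log|x_j|\leq \log C-(\nu-1)\log|y_j|$, the ratio $\log|x_j|/\log|y_j|\to -\infty$ or is at most $-(\nu-1)$; in either case one checks that the resulting lower bound tends to $\tfrac{m}{d}-\tfrac{1}{\nu}$ (this is an elementary computation: the function $r\mapsto (-n/d + (m/d)r)/(1+r)$ is increasing in $r$ and equals $m/d-1/\nu$ at $r=\nu-1$). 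Taking the supremum over $\nu\in\Exp(\Omega)$ gives the claim.

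For the reverse inequality $\gamma_{m,n}(\Omega)\leq \tfrac{m}{d}-\tfrac{1}{\mu(\Omega)}$, fix $\epsilon>0$ and choose $t_j\to\infty$ with $\log\alpha_1(a_{t_j}\Omega)\geq(\gamma-\epsilon)t_j$, realized by vectors $\vv_j=(x_j,y_j)\in\Omega$ satisfying $\|a_{t_j}\vv_j\|\leq e^{-(\gamma-\epsilon)t_j}$. Extracting each coordinate of the max-norm yields simultaneously
$$|x_j|\leq e^{-(\gamma+n/d-\epsilon)t_j}, \qquad |y_j|\leq e^{(m/d-\gamma+\epsilon)t_j}.$$
Here $|x_j|\to 0$, and discreteness of $\Omega$ together with the no-vertical-vectors hypothesis forces $|y_j|\to\infty$ (otherwise the $\vv_j$ would accumulate at a vertical vector). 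The upper bound on $|y_j|$ then inverts to $t_j\geq \log|y_j|/(m/d-\gamma+\epsilon)$, and plugging into the bound for $|x_j|$ produces
$$|x_j|\leq |y_j|^{-(\gamma+n/d-\epsilon)/(m/d-\gamma+\epsilon)} = |y_j|^{-(\nu_\epsilon-1)}, \qquad \nu_\epsilon=\tfrac{1}{m/d-\gamma+\epsilon}.$$
Thus $\nu_\epsilon\in\Exp(\Omega)$, so $\mu(\Omega)\geq\nu_\epsilon$; letting $\epsilon\downarrow 0$ gives $\gamma\leq m/d-1/\mu(\Omega)$.

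The main obstacle is the second direction, specifically the step ``$|y_j|\to\infty$''. One must be careful that the selected sequence is unbounded in the $y$-coordinate and not trapped near a horizontal vector; this uses discreteness combined with the forced decay $|x_j|\to 0$. The edge cases $\mu(\Omega)=\infty$ (handled by letting $\nu$ grow in Step~1 and by the convention $1/\infty=0$) and $\gamma\geq m/d$ (which would make the inversion $t_j\geq\log|y_j|/(m/d-\gamma+\epsilon)$ degenerate, but also forces $\mu=\infty$ and reduces to Step~1) should be addressed separately but are minor. Beyond these, the argument is entirely elementary linear algebra, as advertised.
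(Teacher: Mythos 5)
Your proof is correct and takes essentially the same route as the paper's: both directions hinge on the single best time $t_* = \log(|y|/|x|)$ at which $\|a_t \vv\|$ is minimized, converting a good approximate into a deep excursion and vice versa. The algebra in both directions agrees with the paper's argument (your isolation of $|x_j|^{m/d}|y_j|^{n/d}$ is just a rephrasing of the matching step $e^{nt_k/d}|x_k| = e^{-mt_k/d}|y_k|$), and your treatment of discreteness forcing $|y_j|\to\infty$ matches the contradiction the paper derives.
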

\noindent 
\medskip To keep the paper as self-contained as possible, and to avoid difficulties in notation, we recall the proof:

\begin{proof} We first show for any $\nu \in \Exp(\Omega)$,  $$\gamma_{m,n}(\Omega) \geq \frac{m}{d} -\frac{1}{\mu(\Omega)}.$$ Let $\nu \in \Exp(\Omega)$, $\nu >0$. Let $c>0$, $(x_k, y_k)^T \in \Omega$ be such that $\frac{|x_k|}{|y_k|} \rightarrow \infty$, and 
\begin{equation}\label{eq:xy:geod:nu} |x_k| \le c |y_k|^{-(\nu -1)}.\end{equation} 

\noindent Let $t_k = \log \frac{|x_k|}{|y_k|}$. Then $$e^{\frac{n}{d}t_k} |x_k| = e^{-\frac{m}{d}t_k} |y_k|.$$

\noindent  Using (\ref{eq:xy:geod:nu}), we have $$e^{t_k} =  \frac{|x_k|}{|y_k|} > \frac{1}{c} |y_k|^{\nu}.$$\noindent Taking logarithms, and reorganizing terms, we obtain
\begin{equation}\label{eq:logy:nu} -\log |y_k| > -\frac{t_k}{\nu} - \frac{\log c}{\nu}\end{equation}

\noindent $\alpha_1(a_{t_k} \Omega) > e^{\frac{m}{d}t_k} |y_k|^{-1}$, and taking logarithms, we get

\begin{equation}\label{eq:gt:alpha1:nu} \log \alpha_1(a_{t_k} x) > \frac{m}{d}t_k - \log |y_k| > \left(\frac{m}{d} - \frac{1}{\nu}\right ) t_k - \frac{\log c}{\nu}\end{equation}

\noindent which yields our inequality. To finish the proof, we need:
\begin{claim} For all $\eta \in  \left(0, \frac{1}{\frac{m}{d} - \gamma}\right)$, $\eta \in \Exp(\Omega)$. Thus, $\gamma(\Omega) \le \frac{m}{d} -\frac{1}{\mu(\Omega)}$.\end{claim}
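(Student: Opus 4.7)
The plan is to invert the forward argument just given: instead of starting from a Diophantine approximant and producing an orbit point where $a_t\Omega$ has a short vector, I would read a sequence of short vectors in $a_{t_k}\Omega$ (coming from the $\limsup$ defining $\gamma$) back as a sequence of approximants in $\Omega$. Fix $\eta \in (0, 1/(m/d - \gamma))$. By continuity of $\gamma' \mapsto (m/d - \gamma')^{-1}$ on $\gamma' < m/d$, choose $\gamma' < \gamma$ close enough to $\gamma$ that we still have $\eta < 1/(m/d - \gamma')$. By the $\limsup$ definition of $\gamma$, there exist $t_k \to \infty$ along which $\log \alpha_1(a_{t_k}\Omega) \geq \gamma' t_k$; since $\Omega$ is closed and discrete, we may choose $v_k = (x_k, y_k)^T \in \Omega$ realizing (or very nearly realizing) this supremum, so that reading off coordinates of $a_{t_k}v_k$ in the sup norm yields
\[ |x_k| \;\leq\; e^{-(n/d + \gamma')\,t_k}, \qquad |y_k| \;\leq\; e^{(m/d - \gamma')\,t_k}. \]

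The first bound forces $|x_k| \to 0$, and then closedness, discreteness, and the no-vertical-vector assumption together force $|y_k| \to \infty$ (otherwise a subsequence of $\{v_k\}$ would accumulate at a vertical vector). Combining both coordinate bounds, for $\eta > 1$,
\[ |x_k|\,|y_k|^{\eta-1} \;\leq\; \exp\!\Bigl(\bigl[-(n/d+\gamma') + (m/d-\gamma')(\eta-1)\bigr]\,t_k\Bigr), \]
and an elementary rearrangement shows this exponent is $\leq 0$ exactly when $\eta \leq 1/(m/d - \gamma')$, which holds with strict slack by our choice of $\gamma'$. Hence $|x_k| \leq |y_k|^{-(\eta-1)}$ for all large $k$. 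For $\eta \leq 1$ the inequality $|x_k| \leq C|y_k|^{-(\eta-1)}$ is automatic once $|x_k|$ is bounded and $|y_k| \to \infty$. Therefore $\eta \in \Exp(\Omega)$ for every such $\eta$, so $\mu(\Omega) = \sup \Exp(\Omega) \geq 1/(m/d - \gamma)$, which rearranges to $\gamma \leq m/d - 1/\mu(\Omega)$, the reverse inequality needed to complete Lemma~\ref{claim:prop:3}.

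The main obstacle is really only bookkeeping: inserting a strict slack between $\gamma'$ and $\gamma$ so that the chain of asymptotic inequalities survives when one passes from the $\limsup$ to an honest subsequence, and justifying $|y_k| \to \infty$ from the closedness and discreteness of $\Omega$ together with the absence of vertical vectors. Conceptually the argument is dual to the lower bound just established and is driven by the same observation flagged in the introduction: the family $(x, y - sx)$ is shortest at $s = y/x$ with length $x$ there, so extreme anisotropy of a short vector in an orbit translates precisely into a strong approximation in $\Omega$.
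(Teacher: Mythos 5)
Your argument is correct and follows essentially the same route as the paper: read the $\limsup$ defining $\gamma$ off as a sequence $t_k\to\infty$ with a short vector in $a_{t_k}\Omega$, unpack the coordinate bounds, and rearrange to get the approximation inequality, with discreteness (and no vertical vectors) forcing $|y_k|\to\infty$. The only cosmetic difference is that you insert an explicit slack parameter $\gamma'<\gamma$, whereas the paper reaches the same end by working directly with $\gamma' = \frac{m}{d}-\frac{1}{\eta}$, which is automatically strictly below $\gamma$.
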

\medskip
By construction, $0 \le \gamma(\Omega) \le \frac{m}{d}$, so the interval $\left(0, \frac{1}{\frac{m}{d} - \gamma_{m,n}(\Omega)}\right)$ is non-empty. Let $\eta \in \left(0, \frac{1}{\frac{m}{d} - \gamma(\Omega)}\right)$, so $$\gamma_{m,n}(\Omega) \geq \frac{m}{d} - \frac{1}{\eta}.$$

\noindent Thus, there is a sequence $t_k \rightarrow \infty$, and $(x_k, y_k)^T \in \Omega$ such that 

\begin{eqnarray}\label{eq:gt:tk} e^{-\frac{m}{d}t_k} |y_k| &\le& e^{-\left(\frac{m}{d} - \frac{1}{\eta}\right)t_k} \\ \nonumber
e^{\frac{n}{d} t_k} |x_k| &\le& e^{-\left(\frac{m}{d} - \frac{1}{\eta}\right)t_k} \end{eqnarray}

\noindent Thus, $|y_k| \le e^{\frac{1}{\eta} t_k}$ and $|x_k| \le e^{t_k\left(\frac{1}{\eta} -1\right)}$. Since $\eta >0$, we have $|y_k|^{-\eta} > e^{-t_k}$, so 
\begin{equation}\label{eq:x:y:at} |x_k| < e^{-t_k\left(1-\frac{1}{\eta} \right)} < |y_k|^{-\eta\left(1-\frac{1}{\eta} \right)} = |y_k|^{-(\eta -1)}\end{equation}
Thus, we obtain a sequence $(x_k, y_k)^T \in \La_x$ so that 
$$|x_k| \le |y_k|^{-(\eta -1)}$$
\noindent We need to show that $\frac{|y_k|}{|x_k|} \rightarrow \infty$ (at least along some subsequence). Suppose not. Then there is a $D> 0$ so that $\frac{|y_k|}{|x_k|}  \le D$, i.e. $|y_k| \le D|x_k|$. (\ref{eq:x:y:at}) yields $$|y_k| \le D|x_k| \le D|y_k|^{-(\eta -1)}$$\noindent Thus, $(x_k, y_k) \rightarrow (0, 0)$, which is a contradiction to discreteness, proving the claim, and thus the lemma.\end{proof}

\subsection{Proof of Theorem~\ref{theorem:abstract}}\label{subsec:proof:theorem:abstract} We will first prove claims, corresponding to the first two parts of Theorem~\ref{theorem:abstract}, and then use Lemma~\ref{claim:prop:3} to prove the last two parts. Fix notation as in \S\ref{subsec:notation}: $\La \subset \R^d\minuszero$ discrete, $\beta = \beta(\La)$ and $\gamma = \gamma (\La)$ where $\beta(\La)$ and $\gamma(\La)$ are as in (\ref{eq:beta:la:def}). The symbol $\| \cdot\|$ will be used to denote $L^2$-norm on $\R^m, \R^n$, and $M_{n \times m}(\R)$, and the $(m,n)$-norm on $\R^d$ given by $\|\vv\| = \max(\|p_1(\vv)\|, \|p_2(\vv)\|)$.  
\begin{Claim}\label{claim:theorem:1} For all $\nu \in \Exp(\La)$, there is a $c>0$ so that \begin{equation}\label{eq:horod:lb} \limsup_{s \rightarrow \infty} \frac{\sup_{h \in \B_s} \alpha_1(h\La)}{|s|^{1-\frac{1}{\nu}}} \geq c.\end{equation} Thus $\beta = \beta(\La) = \limsup_{s \rightarrow \infty} \frac{\sup_{ h \in \B_s} \log \alpha_1(h \Lambda)}{\log s}
\geq 1-\frac{1}{\nu}$.
\end{Claim}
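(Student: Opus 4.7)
The plan is to make the observation from the introduction (that $(x,y-sx)^T$ is shortest at $s=y/x$, with length $|x|$ at that time) work in the general $(m,n)$-setting. Given $\nu \in \Exp(\La)$, the definition furnishes a constant $C>0$ and a sequence $\vv_k \in \La$ with $\|p_2(\vv_k)\|_2 \to \infty$ and
\[
\|p_1(\vv_k)\|_2 \le C\,\|p_2(\vv_k)\|_2^{-(\nu-1)}.
\]
The goal is to exhibit, for each $k$, a matrix $B_k \in M_{n\times m}(\R)$ for which $h_{B_k}\vv_k$ is ``as short as possible'' and then to compare the resulting lower bound on $\alpha_1(h_{B_k}\La)$ with $\|B_k\|_2^{\,1-1/\nu}$.

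The key computation is that, for $h_B = \begin{pmatrix} I_m & 0 \\ B & I_n\end{pmatrix}$,
\[
h_B \vv = \bigl(p_1(\vv),\ p_2(\vv) + B\,p_1(\vv)\bigr).
\]
Since $B$ cannot affect the first coordinate, the best one can do is to choose $B$ so that $B\,p_1(\vv_k) = -p_2(\vv_k)$, and this is exactly the $(m,n)$-analogue of taking $s=y/x$. The economical choice is the rank-one matrix
\[
B_k \;:=\; -\,\frac{p_2(\vv_k)\,p_1(\vv_k)^{T}}{\|p_1(\vv_k)\|_2^{2}},
\]
so that $B_k\,p_1(\vv_k)=-p_2(\vv_k)$ and $\|B_k\|_2 = \|p_2(\vv_k)\|_2/\|p_1(\vv_k)\|_2 =: s_k$. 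Then $h_{B_k}\vv_k = (p_1(\vv_k),0)$ has $(m,n)$-norm equal to $\|p_1(\vv_k)\|_2$, so
\[
\alpha_1(h_{B_k}\La) \;\ge\; \frac{1}{\|p_1(\vv_k)\|_2}.
\]

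Now it is just bookkeeping. Set $x_k=\|p_1(\vv_k)\|_2$ and $y_k=\|p_2(\vv_k)\|_2$, so $s_k = y_k/x_k$ and $x_k \le C y_k^{-(\nu-1)}$. Then
\[
x_k\,s_k^{1-1/\nu} \;=\; x_k\bigl(y_k/x_k\bigr)^{1-1/\nu} \;=\; x_k^{1/\nu} y_k^{\,1-1/\nu} \;\le\; C^{1/\nu}\,y_k^{-(\nu-1)/\nu}\,y_k^{\,1-1/\nu} \;=\; C^{1/\nu},
\]
which gives $\alpha_1(h_{B_k}\La)/s_k^{\,1-1/\nu} \ge C^{-1/\nu}$. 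Since $y_k\to\infty$ forces $s_k \ge y_k^{\nu}/C \to \infty$, passing to the $\limsup$ as $s\to\infty$ yields \eqref{eq:horod:lb} with $c = C^{-1/\nu}$. Taking logarithms and dividing by $\log s_k$ converts this to $\beta(\La) \ge 1 - 1/\nu$, completing the claim.

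The only genuine content is the rank-one choice of $B_k$; everything else is exponent arithmetic. The mild subtlety worth double-checking is that $s_k\to\infty$ (so that the $\limsup$ is genuinely over large $s$), which is automatic from $y_k\to\infty$ together with the defining inequality of $\Exp(\La)$. No other estimates are needed, and no appeal to the Minkowski condition on $\La$ is made at this stage; this claim works for any discrete $\La$ without vertical vectors.
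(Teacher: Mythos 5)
Your proof is correct and follows the same route as the paper: you construct the identical rank-one matrix $B_k = -p_2(\vv_k)p_1(\vv_k)^T/\|p_1(\vv_k)\|_2^2$ (the paper writes it row by row via the $r_k^{(i)}$, but it is the same matrix), observe $\|B_k\|=s_k$ and $p_2(h_{B_k}\vv_k)=0$, and run the same exponent arithmetic to get $\alpha_1(h_{B_k}\La)\ge C^{-1/\nu}s_k^{1-1/\nu}$; the check that $s_k\to\infty$ matches as well. Nothing to change.
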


\begin{proof} Let $\nu \in \Exp(\La)$. Thus, there is a $c_0>0$ and a sequence $\vv_k \in \La$ so that $\frac{\|p_2(\vv_k)\|}{\|p_1(\vv_k)\|} \rightarrow \infty$ and
\begin{equation}\label{eq:nu:d} \|p_1(\vv_k)\| \le c_0  \|p_2(\vv_k)\|^{-(\nu-1)}\end{equation}

\noindent Write $p_2(\vv_k) = \left(\begin{array}{c}y_k^{(1)} \\\vdots \\y_k^{(n)}\end{array}\right)$. For $1 \le i \le n$, let \begin{equation}\label{eq:row:def} r_k^{(i)} = -\frac{y_k^{(i)}}{\|p_1(\vv_k)\|^2} p_1(\vv_k).\end{equation} We have $r_k^{(i)} \cdot p_1(\vv_k) = -y_k^{(i)}$ and $\|r_k^{(i)}\| = -\frac{|y_k^{(i)}|}{\|p_1(\vv_k)\|}$. Let $B_k$ be the $n \times m$ matrix whose $i^{th}$  row is $r_k^{(i)}$ (transposed). Note that  $\|B_k\| = s_k = \frac{\|p_2(\vv_k)\|}{\|p_1(\vv_k)\|}$. Note that for any $B \in M_{n \times m}(\R)$, and $\vv \in \R^d$, 
\begin{eqnarray}\label{eq:B:v} p_1(h_B \vv) &=& p_1(\vv)\\ \nonumber p_2(h_B \vv) &=& Bp_1(\vv) + p_2(\vv), \end{eqnarray} 

\noindent Since $B_k p_1(\vv_k) = - p_2(\vv_k)$ by construction, we have
\begin{eqnarray}\label{eq:B:k:v} p_1(h_{B_k} \vv_k) &=& p_1(\vv_k)\\ \nonumber p_2(h_{B_k} \vv_k) &=& 0, \end{eqnarray} 

\noindent Thus, 
\begin{equation}\label{eq:alpha1:d} \alpha_1(h_{B_k} \La)  \geq \frac{1}{\|p_1(\vv_k)\|}.\end{equation}

\noindent Rewriting (\ref{eq:nu:d}), we obtain
$$1 \geq c_0^{-1} \|p_1(\vv_k)\| \|p_2(\vv_k)\|^{\nu-1}$$

\noindent and raising both sides to the power $1/\nu$, we have
$$1 \geq c_0^{-1/\nu}\|p_1(\vv_k)\|^{\frac{1}{\nu}} \|p_2(\vv_k)\|^{1-\frac{1}{\nu}}$$

\noindent Replacing the $1$ in (\ref{eq:alpha1:d}) with the above expression, we obtain
\begin{equation}\label{eq:alpha1:nu:d} \alpha_1(h_{B_k} x)  \geq c_0^{-\frac{1}{\nu}}\|B_k\|^{1-\frac{1}{\nu}}.
\end{equation}

\noindent Setting $c = c_0^{-\frac{1}{\nu}}$, we obtain (\ref{eq:horod:lb}).

\end{proof}

\begin{Claim}\label{claim:theorem:2} Suppose $\beta = \beta(\La)> 0$. Then  $\left(1, \frac{1}{1-\beta}\right) \subset \Exp (\La)$.
\end{Claim}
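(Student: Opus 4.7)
The plan is to invert the construction of Claim~\ref{claim:theorem:1}: there we started from a Diophantine approximation and produced a horospherical element $h_{B}$ that makes a lattice vector very short; now, given that short lattice vectors do occur for $h_B$'s with $\|B\|$ controlled, we aim to read off the Diophantine approximation from them.

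First I would fix $\nu \in \left(1, \tfrac{1}{1-\beta}\right)$, equivalently $\beta > 1 - \tfrac{1}{\nu}$, and pick $\epsilon \in \bigl(0,\,\beta - 1 + \tfrac{1}{\nu}\bigr)$. Using $\beta = \limsup_{s \to \infty} \beta_s(\La)/\log s$, extract a sequence $s_k \to \infty$ with $\beta_{s_k}(\La) \geq (\beta - \epsilon)\log s_k$. Unfolding the suprema in $\beta_s$ and $\alpha_1$ (at the cost of enlarging $\epsilon$ by an arbitrarily small amount), pick $B_k \in \B_{s_k}$ and $\uu_k \in \La$ with
$$\|h_{B_k} \uu_k\| \le s_k^{-(\beta - \epsilon)}.$$

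Second, I would translate this into coordinate bounds on $\uu_k$ via the identities $p_1(h_B \vv) = p_1(\vv)$ and $p_2(h_B \vv) = p_2(\vv) + B\,p_1(\vv)$ from (\ref{eq:B:v}). Combined with $\|B_k\|\le s_k$, this yields
$$\|p_1(\uu_k)\| \le s_k^{-(\beta-\epsilon)}, \qquad \|p_2(\uu_k)\| \le \|h_{B_k}\uu_k\| + \|B_k\|\|p_1(\uu_k)\| \le 2\,s_k^{1-\beta+\epsilon}.$$
The Diophantine product then satisfies
$$\|p_1(\uu_k)\|\,\|p_2(\uu_k)\|^{\nu-1} \le 2^{\nu-1}\,s_k^{(\nu-1)-\nu\beta+\nu\epsilon},$$
whose $s_k$-exponent is strictly negative by the choice of $\epsilon$ and $\nu$; hence the product is bounded by a constant $C$ independent of $k$.

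The remaining, and only subtle, step will be verifying that $\|p_2(\uu_k)\|\to\infty$, so that the $\uu_k$ genuinely form a sequence of approximations rather than collapsing to a single vector. If $\|p_2(\uu_k)\|$ stayed bounded along a subsequence, then together with $\|p_1(\uu_k)\| \to 0$ (which holds because $\beta - \epsilon > 0$) the vectors $\uu_k$ would lie in a bounded subset of $\R^d$; discreteness of $\La$ would force a constant subsequence, and the vanishing of $p_1$ on that constant would produce a vertical vector in $\La$, contradicting the standing hypothesis. Passing to such a subsequence, $\uu_k$ witnesses $\nu \in \Exp(\La)$, as required. The main obstacle I anticipate is precisely this last nondegeneracy check; the preceding two steps are essentially bookkeeping dual to the computation in Claim~\ref{claim:theorem:1}.
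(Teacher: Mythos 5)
Your proposal is correct and follows essentially the same route as the paper: extract a sequence from the $\limsup$ defining $\beta$, use the block form of $h_B$ to bound $\|p_1(\uu_k)\|$ and $\|p_2(\uu_k)\|$ separately, combine to get the Diophantine inequality, and then invoke discreteness together with the no-vertical-vectors hypothesis to rule out degeneracy. The only cosmetic difference is parameterization: you carry an explicit $\epsilon$, whereas the paper introduces an auxiliary exponent $\nu$ with $\eta < \nu < \tfrac{1}{1-\beta}$ playing the same role, and in the final nondegeneracy step the paper shows the ratio $\|p_2\|/\|p_1\|$ blows up while you directly show $\|p_2\|\to\infty$; these are equivalent under the stated hypotheses.
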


\begin{proof} Let $\eta \in \left(1, \frac{1}{1-\beta}\right)$, and let $\nu \in \left(\eta, \frac{1}{1-\beta}\right)$, so 
$$\beta > 1- \frac{1}{\nu} > 1- \frac{1}{\eta},$$ and there is a sequence $s_k \rightarrow \infty$, $\vv_k \in \La$, and $B_k \in M_{n\times m}(\R)$ with $\|B_k\| \le s_k$ so that 
$$\|h_{B_k} \vv_k\| \le s_k^{-(1-\frac{1}{\nu})}.$$ Thus, \begin{eqnarray}\label{eq:k:d:nu} \|p_1(\vv_k)\| &\le& s_k^{-(1-\frac{1}{\nu})}\\  \|B_kp_1(\vv_k) + p_2(\vv_k)\| &\le& s_k^{-(1-\frac{1}{\nu})}\nonumber \end{eqnarray}
\noindent Rewriting the second equation, we obtain
$$\|p_2(\vv_k)\| \le \|B_k p_1(\vv_k)\| + s_k^{-(1-\frac{1}{\nu})} \le \|B_k\| \|p_1(\vv_k)\| + s_k^{-(1-\frac{1}{\nu})}.$$ Since $\|B_k\| \le s_k$, we have $\|p_2(\vv_k)\| \le s_k^{\frac{1}{\nu}} + s_k^{-(1-\frac{1}{\nu})}$. Now, \begin{eqnarray}\label{eqnarray:k}\nonumber \|p_1(\vv_k)\| \le s_k^{-(1-\frac{1}{\nu})} & =& \|p_2(\vv_k)\|^{(1-\eta)}\|p_2(\vv_k)\|^{(\eta -1)}s_k^{-(1-\frac{1}{\nu})} \\ \nonumber &\le&  \|p_2(\vv_k)\|^{(1-\eta)}\left(s_k^{\frac{1}{\nu}} + s_k^{-(1-\frac{1}{\nu})}\right)^{\eta-1}s_k^{-(1-\frac{1}{\nu})} \\  &\le &  \|p_2(\vv_k)\|^{(1-\eta)} 2^{\eta-1} s_k^{\frac{\eta-1}{\nu}} s_k^{-(1-\frac{1}{\nu})} \end{eqnarray}

\noindent where in the last line we are using $s_k^{\frac{1}{\nu}} > s_k^{\frac{1}{\nu} -1}$ (since we can assume $s_k >1$). Combining the powers in the last line of (\ref{eqnarray:k}), we obtain $s_k^{\frac{\eta}{\nu} -1}$. Since $\frac{\eta}{\nu} -1 < 0$, we can define $C = \max_{k} 2^{\eta-1} s_k^{\frac{\eta}{\nu} -1} < \infty$,  and (\ref{eqnarray:k}) yields
$$\|p_1(\vv_k)\| \le C \|p_2(\vv_k)\|^{-(\eta-1)}$$
\noindent Finally, the discreteness of $\La$ implies that (along a subsequence) we must have $\frac{|p_2(\vv_k)\|}{\|p_1(\vv_k)\|} \rightarrow \infty$, otherwise, if there was a bound $D \le \infty$ so that $\|p_2(\vv_k)\| \le D \|p_1(\vv_k)\|$, we would have $\|p_2(\vv_k)\| \le (DC)^{\frac{1}{\eta}}$ and $\|p_1(\vv_k)\| \le DC^{1-\frac{1}{\eta}}$, so that $\vv_k$ would have to cycle through a finite set, a contradiction to $\beta >0$.

\end{proof}

\begin{Claim}\label{claim:discrete} Suppose $\La \subset \R^d$ is discrete. then $\Pi_{m,n}(\La)$ is discrete in $\R^2$.
\end{Claim}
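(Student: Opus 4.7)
The plan is to exploit the properness of $\Pi_{m,n}$: preimages of bounded sets in $\R^2$ are bounded in $\R^d$. Indeed, if $B \subset \R^2$ is bounded then every $\vv \in \Pi_{m,n}^{-1}(B)$ satisfies $\|p_1(\vv)\|_2 \le R$ and $\|p_2(\vv)\|_2 \le R$ for some $R$ depending only on $B$, so $\vv$ lies in the $(m,n)$-norm ball of radius $R$, which is bounded in $\R^d$.

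Given this observation, the argument is routine. Fix any $\mathbf{w} \in \Pi_{m,n}(\La)$ and let $U$ be a bounded neighborhood of $\mathbf{w}$ in $\R^2$. Then $\Pi_{m,n}^{-1}(U)$ is bounded in $\R^d$; since $\La$ is closed and discrete (as in all the examples treated in the paper), the intersection $\Pi_{m,n}^{-1}(U) \cap \La$ is finite, and hence its image $U \cap \Pi_{m,n}(\La)$ is finite as well. Thus $\mathbf{w}$ is isolated in $\Pi_{m,n}(\La)$, and since $\mathbf{w}$ was arbitrary, $\Pi_{m,n}(\La)$ is discrete.

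The only conceptual point is that although $\Pi_{m,n}$ collapses the $SO(m) \times SO(n)$-orbits in $\R^d$ and is therefore far from a homeomorphism, it is still proper, and properness suffices to push forward discreteness. I do not anticipate any real obstacle; this claim is essentially a book-keeping lemma ensuring that the one-dimensional approximation arguments recalled in \S\ref{subsec:diag:flows2} are legitimately applicable to $\Pi_{m,n}(\La)$ (so that Diophantine exponents for subsets of $\R^2$ can be used to measure exponents for $\La \subset \R^d$).
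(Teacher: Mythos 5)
Your proof is correct and is essentially the paper's argument in different clothing: the "properness" you invoke is exactly the paper's observation that $\|\Pi_{m,n}(\vv)\|_{\sup} = \|\vv\|$ (in the $(m,n)$-norm on $\R^d$), after which both arguments note that bounded subsets of $\R^2$ therefore pull back to bounded subsets of $\R^d$, which meet $\La$ in finite sets. The only cosmetic difference is that you phrase the conclusion via isolated points, while the paper directly verifies the ``bounded sets meet a discrete set in finitely many points'' characterization; since your argument also works for arbitrary bounded $U$, it yields the same locally finite conclusion.
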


\begin{proof} Recall that a subset $A$ of Euclidean space is discrete if and only if for any $R >0$, $\{ \vv \in A: \| \vv\| \le R\}$ is finite, and that this statement is independent of the norm. Let $\| \cdot \|_{\sup}$ denote $\sup$-norm on $\R^2$, and $\| \cdot \|$ denote our our usual norm on $\R^d$. Then we have, for all $\vv \in \R^d$, $$\|\Pi_{m,n} (\vv)\|_{\sup} = \|\vv\|.$$ Thus, since $\La$ is discrete, $\{ \vv \in \Pi_{m,n} (\La): \|\vv\| \le R\}$ is finite for all $R >0$, and so $\Pi_{m,n}(\La)$ is discrete.

\end{proof}

\noindent To complete the proof of Theorem~\ref{theorem:abstract}, apply Lemma~\ref{claim:prop:3} to the set $\Omega = \Pi_{m,n}(\La)$, observing that $$\Pi_{m,n}(g_t \vv) = a_t \Pi_{m,n}(\vv)$$ for all $\vv \in \R^d$. Since $\La$ is assumed to be discrete, $\Omega = \Pi_{m,n}(\La)$ is discrete as well.\qed

\subsection{Proof of Proposition~\ref{prop:abstract:markoff}}\label{subsec:abstract:markoff:proof} We divide the proof into two parts. In \S\ref{subsec:markoff:convex}, we prove (\ref{eq:markoff:minkowski}) and in \S\ref{subsec:markoff:approx} we prove (\ref{eq:sigma:markoff}).

\subsubsection{Markoff constants and convex sets}\label{subsec:markoff:convex} We follow the proof of Lemma~\ref{lemma:exp:2}. Let $c = c(\La)$ denote the Minkowski constant of $\La$, and let $c^{\prime} > \left(\frac{c}{a_m a_n}\right)^{\frac{1}{m}}$. For $j \in \N$,

$$K_j : = \Pi_{m,n}^{-1}\left(\left\{\left(\begin{array}{c}x \\y\end{array}\right) \in \R^2: |x| \le \frac{c^{\prime}}{j}, |y| \le j^{\frac{m}{n}}\right\}\right).$$

\noindent $K_j$ is convex, centrally symmetric, and has volume $\geq c$, so we can find 
$$\vv_j \in \La \cap K_j.$$

\noindent By the definition of $K_j$, 
\begin{equation}\label{eq:c:cprime} \|p_1(\vv_j)\| \le \frac{c^{\prime}}{j} \le c^{\prime}\|p_2(\vv_j)\|^{-\frac{n}{m}} = c^{\prime}\|p_2(\vv_j)\|^{-(\frac{d}{m}-1)}.\end{equation}

\noindent As above, if $\|p_2(\vv_j)\|/\|p_1(\vv_j)\|$ were bounded, we would have $\vv_j \rightarrow 0$, a contradiction to discreteness. Thus, (\ref{eq:c:cprime}) yields $c^{\prime} \geq \tilde{c}(\La)$, and since $c^{\prime}$ was arbitrary, we have 
$$\left(\frac{c}{a_m a_n}\right)^{\frac{1}{m}} \geq \tilde{c}(\La).$$
\noindent Rewriting, we obtain (\ref{eq:markoff:minkowski}). \qed
\subsubsection{Markoff constants and approximates}\label{subsec:markoff:approx} We divide the proof of (\ref{eq:sigma:markoff}) into two claims:

\begin{Claim}\label{claim:sigma:markoff:1} Let $c > \tilde{c}(\Lambda)$. Then $\sigma(\La) > c^{-\frac{m}{d}}$. Thus $\sigma(\La) \geq \tilde{c}(\La)^{-\frac{m}{d}}$. 
\end{Claim}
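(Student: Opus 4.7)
The plan is to invert the construction used in the proof of Claim~\ref{claim:theorem:1}: there, a near-vertical vector $\vv_k \in \La$ was used to build a unipotent element $h_{B_k}$ that sends $\vv_k$ to a horizontal (hence short) vector. Here I run the same construction quantitatively, starting from the near-vertical approximates guaranteed by the Markoff condition, but tracking norms rather than logarithms.

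Fix $c > \tilde{c}(\La)$ and pick an intermediate $c' \in (\tilde{c}(\La), c)$. By definition of the Markoff constant there is a sequence $\{\vv_k\} \subset \La$ satisfying
\[
\|p_1(\vv_k)\| \le c' \|p_2(\vv_k)\|^{-n/m}.
\]
The discreteness of $\La$ forces $\|p_2(\vv_k)\| \to \infty$ along a subsequence, since otherwise both $\|p_1(\vv_k)\|$ and $\|p_2(\vv_k)\|$ would be bounded and the $\vv_k$ would have to repeat. Define $B_k \in M_{n\times m}(\R)$ by the rank-one formula from Claim~\ref{claim:theorem:1}, namely with $i$-th row $r_k^{(i)} = -(y_k^{(i)}/\|p_1(\vv_k)\|^2)\,p_1(\vv_k)^T$, so that $B_k p_1(\vv_k) = -p_2(\vv_k)$. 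Then $p_1(h_{B_k}\vv_k) = p_1(\vv_k)$ and $p_2(h_{B_k}\vv_k) = 0$, whence in the $(m,n)$-norm
\[
\|h_{B_k}\vv_k\| = \|p_1(\vv_k)\|, \qquad s_k := \|B_k\| = \frac{\|p_2(\vv_k)\|}{\|p_1(\vv_k)\|},
\]
and consequently $\sigma_{s_k}(\La) \ge \alpha_1(h_{B_k}\La) \ge 1/\|p_1(\vv_k)\|$.

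The last step is to convert the Markoff inequality into a comparison between $s_k^{n/d}$ and $\|p_1(\vv_k)\|$. Rewriting the hypothesis as $\|p_2(\vv_k)\| \le (c')^{m/n}\|p_1(\vv_k)\|^{-m/n}$, raising to the power $n/d$, and dividing by $\|p_1(\vv_k)\|^{n/d}$ gives
\[
s_k^{n/d} \le (c')^{m/d}/\|p_1(\vv_k)\|,
\]
so that $\sigma_{s_k}(\La)/s_k^{n/d} \ge (c')^{-m/d}$. Taking $\limsup$ as $s_k \to \infty$ yields $\sigma(\La) \ge (c')^{-m/d} > c^{-m/d}$, proving the first assertion; then letting $c \downarrow \tilde{c}(\La)$ gives $\sigma(\La) \ge \tilde{c}(\La)^{-m/d}$ (with the value $\infty$ in the degenerate case $\tilde{c}(\La)=0$, since $c^{-m/d}$ is then unbounded). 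The only delicate point is the introduction of the slack $c' < c$, which is needed both to produce the strict inequality and to invoke the \emph{existence} of the Markoff approximating sequence (rather than just $\liminf$-style estimates); the rest of the argument is linear-algebraic bookkeeping built on the rank-one construction already isolated in Claim~\ref{claim:theorem:1}.
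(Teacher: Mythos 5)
Your argument is correct and is essentially the paper's proof: both pass the near-vertical approximate $\vv_k$ through the rank-one matrix $B_k$ built in Claim~\ref{claim:theorem:1}, so that $p_2(h_{B_k}\vv_k)=0$, $\|h_{B_k}\vv_k\| = \|p_1(\vv_k)\|$, and $s_k = \|B_k\| = \|p_2(\vv_k)\|/\|p_1(\vv_k)\|$; and both then algebraically massage the Markoff inequality into $1/\|p_1(\vv_k)\| \geq (c')^{-m/d}\,s_k^{n/d}$ (the paper does this by writing $1 \geq (\|p_1(\vv_k)\|\,\|p_2(\vv_k)\|^{n/m} c^{-1})^{m/d}$ and substituting for the numerator; your manipulation is the same up to rearrangement). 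Your insertion of the intermediate constant $c' \in (\tilde{c}(\La), c)$ is a small genuine improvement: the paper works directly with $c$, and its chain of inequalities literally yields only $\sigma(\La) \geq c^{-m/d}$, though it asserts the strict version; your slack is what honestly produces the strict $>$, and it costs nothing since the final conclusion only needs $c \downarrow \tilde{c}(\La)$. One place where you are slightly hasty, inheriting an ambiguity the paper also leaves implicit: your parenthetical that discreteness forces $\|p_2(\vv_k)\| \to \infty$ ``since otherwise both projections would be bounded'' does not follow from the Markoff inequality alone—if $\|p_2(\vv_k)\|$ were bounded above but tended to $0$, then $\|p_1(\vv_k)\|$ could be unbounded and $s_k \to 0$, which would kill the $\limsup$ estimate. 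To exclude this one needs the convention, explicit in the $\R^2$ definition via (\ref{nu2}) but only implicit in the general $(m,n)$-Markoff constant, that the approximating sequence has $\|p_2(\vv_k)\| \to \infty$; the paper's proof relies on the same unstated assumption.
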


\begin{proof} For $c > \tilde{c}(\La)$, there is a sequence $\vv_k \in \La$ such that $$\|p_1(\vv_k)\| \le c \|p_2(\vv_k)\|^{-\frac{n}{m}},$$ which we rewrite as $$1 \geq \left(\|p_1(\vv_k)\| \|p_2(\vv_k)\|^{\frac{n}{m}} c^{-1}\right)^{\frac{m}{d}}.$$As in the proof of Claim~\ref{claim:theorem:1}, let $B_k \in M_{n \times m}(\R)$ be the matrix whose rows are given by $r_k^{(i)}$, defined by (\ref{eq:row:def}). Thus $h_{B_k} \vv_k$ satisfies $$p_1(h_{B_k}\vv_k) = p_1(\vv_k) \mbox{ and } p_2(h_{B_k} \vv_k) = 0,$$ and $s_k : = \|B_k\| = \frac{\|p_2(\vv_k)\|}{\|p_1(\vv_k)\|}$.  Thus 
\begin{eqnarray}\nonumber \sigma_{s_k}(\La)  \geq \alpha_1(h_{B_k}\La) &\geq& \frac{1}{\|p_1(\vv_k)\|}\\ \nonumber &\geq& \frac{\left(\|p_1(\vv_k)\| \|p_2(\vv_k)\|^{\frac{n}{m}} c^{-1}\right)^{\frac{m}{d}}}{\|p_1(\vv_k)\|} \\ &\geq& c^{-\frac{m}{d}} s_k^{\frac{n}{d}}\end{eqnarray}
Thus $\sigma(\La) > c^{-\frac{m}{d}}$ as desired.

\end{proof}
\medskip

\noindent To complete the proof of (\ref{eq:sigma:markoff}) and thus Proposition~\ref{prop:abstract:markoff}, we require the following

\begin{Claim} Let $c^{\prime} > (\sigma(\La))^{-\frac{d}{m}}$. Then there are infinitely many $\vv \in \La$ so that $$\|p_1(\vv)\| \le c^{\prime} \|p_2(\vv)\|^{-\frac{n}{m}}.$$ Thus, $\sigma(\La) \le \tilde{c}(\La)^{-\frac{m}{d}}$. 

\end{Claim}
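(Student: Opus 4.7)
The plan is to reverse the strategy of Claim~\ref{claim:sigma:markoff:1}: from the hypothesis that $\sigma_s(\La)$ is comparable to $s^{n/d}$ along some sequence $s\to\infty$, I will extract short vectors in the translates $h_B\La$ whose $p_1$- and $p_2$-components realize the Markoff inequality with constant $c^{\prime}$.

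First, rewrite the hypothesis $c^{\prime} > \sigma(\La)^{-d/m}$ as $\sigma(\La) > (c^{\prime})^{-m/d}$, and fix a constant $\tilde c$ with $(c^{\prime})^{-m/d} < \tilde c < \sigma(\La)$. By the limsup definition of $\sigma(\La)$, there is a sequence $s_k\to\infty$ with $\sigma_{s_k}(\La)\ge \tilde c\, s_k^{n/d}$, and since $\sigma_{s_k}(\La) = \sup_{b\in\B_{s_k}} \alpha_1(h_b\La)$ and $\alpha_1(h_b\La) = \sup_{\vv\in\La}1/\|h_b\vv\|$, for each $k$ I may choose $B_k\in M_{n\times m}(\R)$ with $\|B_k\|\le s_k$ and $\vv_k\in\La$ satisfying
$$\|h_{B_k}\vv_k\| \le \frac{1+o(1)}{\tilde c\, s_k^{n/d}}.$$

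Now use (\ref{eq:B:v}): $p_1(h_{B_k}\vv_k) = p_1(\vv_k)$ and $p_2(h_{B_k}\vv_k) = B_k p_1(\vv_k) + p_2(\vv_k)$. The definition of the $(m,n)$-norm gives $\|p_i(\vv)\|\le\|\vv\|$, so the first identity yields
$$\|p_1(\vv_k)\| \le \frac{1+o(1)}{\tilde c\, s_k^{n/d}},$$
and the second, combined with the triangle inequality and $\|B_k\|\le s_k$, gives
$$\|p_2(\vv_k)\| \le \|h_{B_k}\vv_k\| + \|B_k\|\,\|p_1(\vv_k)\| \le \frac{s_k^{m/d}}{\tilde c}(1+o(1)).$$
Multiplying these, and using $1 + n/m = d/m$,
$$\|p_1(\vv_k)\|\,\|p_2(\vv_k)\|^{n/m} \le \frac{1+o(1)}{\tilde c^{\,d/m}}.$$
Since $\tilde c^{\,d/m} > 1/c^{\prime}$ by our choice of $\tilde c$, for all sufficiently large $k$ we obtain $\|p_1(\vv_k)\| \le c^{\prime}\,\|p_2(\vv_k)\|^{-n/m}$.

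Finally, I must verify that $\{\vv_k\}$ contains infinitely many distinct vectors. Since $\La$ has no vertical vectors, $\|p_1(\vv_k)\|>0$ for each $k$; if $\{\vv_k\}$ assumed only finitely many values, $\{\|p_1(\vv_k)\|\}$ would have a positive lower bound, contradicting $\|p_1(\vv_k)\|\to 0$. Passing to a subsequence of distinct vectors proves the first assertion, and hence $\tilde c(\La)\le c^{\prime}$ for every $c^{\prime}>\sigma(\La)^{-d/m}$; this gives $\tilde c(\La)\le \sigma(\La)^{-d/m}$, i.e., $\sigma(\La)\le \tilde c(\La)^{-m/d}$. Combined with Claim~\ref{claim:sigma:markoff:1} this establishes (\ref{eq:sigma:markoff}) and completes the proof of Proposition~\ref{prop:abstract:markoff}. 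The only delicate point is the bookkeeping of the constants under the exponent $n/m$, which is why $\tilde c$ was chosen strictly inside the open interval $\bigl((c^{\prime})^{-m/d},\,\sigma(\La)\bigr)$ rather than at either endpoint.
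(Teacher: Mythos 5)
Your proof is correct and follows essentially the same route as the paper's: introduce an intermediate constant strictly between $(c')^{-m/d}$ and $\sigma(\La)$, extract witnesses $B_k,\vv_k$ with $\|h_{B_k}\vv_k\|$ small along a sequence $s_k\to\infty$, read off the bounds on $\|p_1(\vv_k)\|$ and $\|p_2(\vv_k)\|$ from~(\ref{eq:B:v}), multiply, and let the slack in the intermediate constant absorb the $(1+s_k^{-1})$-type error term. Your use of $1+o(1)$ bookkeeping in place of the paper's explicit inequalities (\ref{eq:vk:m:c})--(\ref{eq:c:n:d}) is a minor streamlining, and your explicit discreteness argument for the ``infinitely many'' conclusion fills in a point the paper leaves tacit.
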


\begin{proof} Let $c^{\prime}> c> \sigma(\La)^{-\frac{d}{m}}$. Then $c^{-\frac{m}{d}} < \sigma(\La)$, so $\exists s_k \rightarrow \infty$, $B_k \in M_{n \times m}(\R)$ with $\|B_k\| \le s_k$, and $\vv_k \in \La$ so that $\|h_{B_k}\vv_k\| \le c^{\frac{m}{d}}s_k^{-\frac{n}{d}}$. Again using (\ref{eq:B:v}), we can rewrite this as
\begin{equation}\label{eq:Bk:sk:m}
\|p_1(\vv_k)\| \le c^{\frac{m}{d}}s_k^{-\frac{n}{d}}
\end{equation}
\begin{equation}\label{eq:Bk:sk:n} 
\|B_k p_1(\vv_k) + p_2(\vv_k)\| \le c^{\frac{m}{d}}s_k^{-\frac{n}{d}}
\end{equation}
\noindent We will show that for $k >>0$, 
\begin{equation}\label{eq:xk:yk:c}
\|p_1(\vv_k)\| \le c^{\prime} \|p_2(\vv_k)\|^{-\frac{n}{m}}
\end{equation}
\noindent (\ref{eq:Bk:sk:m}) yields
$$\|p_1(\vv_k)\| \le c^{\prime} \|p_2(\vv_k)\|^{-\frac{n}{m}} \left(c^{\prime}\right)^{-1} \|p_2(\vv_k)\|^{\frac{n}{m}} c^{\frac{m}{d}} s_k^{-\frac{n}{d}} $$
\noindent Thus, it suffices to show that, for $k >>0$, 
$$\left(c^{\prime}\right)^{-1} \|p_2(\vv_k)\|^{\frac{n}{m}} c^{\frac{m}{d}} s_k^{-\frac{n}{d}} \le 1,$$ i.e., 
\begin{equation}\label{eq:yk:c:cprime}
\|p_2(\vv_k)\|^{\frac{n}{m}} \le c^{\prime} c^{-\frac{m}{d}} s_k^{\frac{n}{d}}
\end{equation}
\noindent Using (\ref{eq:Bk:sk:n}), we have 
$$\|p_2(\vv_k)\| \le \|B_k p_1(\vv_k)\| + c^{\frac{m}{d}} s_k^{-\frac{n}{d}}$$
\noindent Since $\|B_k p_1(\vv_k)\| \le \|B_k\| \|p_1(\vv_k)\|$, we can rewrite this as 
\begin{eqnarray}\label{eq:vk:m:c}\nonumber \|p_1(\vv_k)\| &\le& s_k c^{\frac{m}{d}} s_k^{-\frac{n}{d}} + c^{\frac{m}{d}} s_k^{-\frac{n}{d}} \\ &=& c^{\frac{m}{d}} s_k^{\frac{m}{d}}\left(1 + s_k^{-1}\right)
\end{eqnarray}
\noindent For $k >>0$, $\left(1 + s_k^{-1}\right) < \left(\frac{c^{\prime}}{c}\right)^{\frac{m}{n}}$, so 
$$\left(1 + s_k^{-1}\right)^{\frac{n}{m} }< \frac{c^{\prime}}{c},$$ so 
\begin{equation}\label{eq:c:n:d}c^{\frac{n}{d}} \left(1 + s_k^{-1}\right)^{\frac{n}{m}} < c^{\prime} c^{-\frac{m}{d}}\end{equation}
\noindent Raising (\ref{eq:vk:m:c}) to the $\frac{n}{m}$ power, and combining with (\ref{eq:c:n:d}), we obtain (\ref{eq:yk:c:cprime}), as desired.
\end{proof}

\subsubsection{One-sided results}\label{sec:proof:prop:mark:2} Given $\vv = \left(\begin{array}{c}\vv_1 \\ \vv_2\end{array}\right) \in \R^2$, we define 
\begin{equation}\label{eq:signum:def} \sgn(\vv) : = \sgn\left(\frac{\vv_1}{\vv_2}\right).\end{equation} Given a discrete set in $\La \in \R^2$, let \begin{eqnarray*} \La^+ &=& \{\vv \in \La: \sgn(\vv) = 1\} \\ \La^- &=& \{\vv \in \La: \sgn(\vv) = -1\}\\ \end{eqnarray*}. Let $X$ be a $SL(2,\R)$-Minkowski system (for example $X_2$), and for $x \in X$,  let \begin{eqnarray*} \mu^+(x) &=& \sup \Exp(\La_x^+)\\  \mu^- (x) &=& \sup \Exp(\La_x^-) \\ \tilde{c}^+(x) &=& \tilde{c}(\La_x^+) \\\tilde{c}^-(x) &=& \tilde{c}(\La_x^-)\\ \end{eqnarray*}

\begin{Prop}\label{prop:one:sided} Suppose $\La_x$ does not have vertical vectors. Then 
\begin{equation}\label{eq:dioph:plus} \limsup_{s \rightarrow +\infty} \frac{ \log \alpha_1(h_s x)}{\log s} = 1-\frac{1}{\mu^{-}(x)}\end{equation}
\begin{equation}\label{eq:dioph:minus} \limsup_{s \rightarrow -\infty} \frac{ \log \alpha_1(h_s x)}{\log |s|} = 1-\frac{1}{\mu^{+}(x)}\end{equation}
\begin{equation}\label{eq:markoff:plus} \limsup_{s \rightarrow +\infty} \frac{\alpha_1(h_s x)}{s^{\frac{1}{2}}} = \sigma^{-}(x)^{-\frac{1}{2}}\end{equation}
\begin{equation}\label{eq:markoff:minus} \limsup_{s \rightarrow -\infty} \frac{\alpha_1(h_s x)}{|s|^{\frac{1}{2}}} = \sigma^{+}(x)^{-\frac{1}{2}}\end{equation}

\end{Prop}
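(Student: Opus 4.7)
The plan is to adapt the arguments of Claims~\ref{claim:theorem:1} and~\ref{claim:theorem:2} and of the two claims in \S\ref{subsec:markoff:approx} to the rank one case $m=n=1$, tracking signs throughout. The key observation is that in this case $h_s(x,y)^T = (x, sx+y)^T$ is shortest at $s = -y/x$, so $\sgn(s) = -\sgn(y/x) = -\sgn(\vv)$. Hence the approximates relevant to $s \to +\infty$ lie in $\La^-$, while those for $s\to -\infty$ lie in $\La^+$, matching the sign pattern in (\ref{eq:dioph:plus})--(\ref{eq:markoff:minus}).

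For the lower bounds in (\ref{eq:dioph:plus}) and (\ref{eq:dioph:minus}), given $\nu \in \Exp(\La_x^-)$ (resp.\ $\Exp(\La_x^+)$), pick a sequence $\vv_k = (x_k, y_k)^T$ with $|y_k|/|x_k| \to \infty$ and $|x_k| \le c|y_k|^{-(\nu-1)}$. Set $s_k := -y_k/x_k$; by the sign condition, $s_k > 0$ (resp.\ $s_k < 0$) and $|s_k| \to \infty$. Since $h_{s_k}\vv_k = (x_k, 0)^T$, the computation of Claim~\ref{claim:theorem:1} yields $\alpha_1(h_{s_k}x) \ge c^{-1/\nu}|s_k|^{1-1/\nu}$, giving the claimed lower bound. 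The same substitution applied to $c > \tilde c^-(x)$ (resp.\ $c > \tilde c^+(x)$), mirroring Claim~\ref{claim:sigma:markoff:1}, gives the lower bounds in (\ref{eq:markoff:plus}) and (\ref{eq:markoff:minus}).

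For the upper bounds, suppose $L := \limsup_{s \to +\infty} \log \alpha_1(h_s x)/\log s > 0$ (the case $s\to -\infty$ is analogous). For $\eta \in (1, 1/(1-L))$, choose $\nu \in (\eta, 1/(1-L))$ and extract $s_k \to +\infty$ and $\vv_k = (x_k, y_k)^T \in \La_x$ satisfying both $|x_k| \le s_k^{-(1-1/\nu)}$ and $|s_k x_k + y_k| \le s_k^{-(1-1/\nu)}$. The Claim~\ref{claim:theorem:2} computation gives $|x_k| \le C|y_k|^{-(\eta-1)}$ with $|y_k|/|x_k| \to \infty$ by discreteness. To conclude $\eta \in \Exp(\La_x^-)$, we verify $\sgn(\vv_k) = -1$ for $k$ large: the second inequality gives $y_k = -s_k x_k + O(s_k^{-(1-1/\nu)})$, and once $|y_k|/|x_k|$ is large the error $s_k^{-(1-1/\nu)}$ is negligible compared to $|s_k x_k|$, so $\sgn(y_k/x_k) = -\sgn(s_k) = -1$. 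Identical sign-tracking in the second claim of \S\ref{subsec:markoff:approx} yields the upper bounds in (\ref{eq:markoff:plus}) and (\ref{eq:markoff:minus}). The main obstacle is this final sign verification, which requires $|s_k x_k|$ to dominate the error along the extracted subsequence; discreteness together with $L > 0$ force $|y_k|$ to grow without bound, ensuring this dominance.
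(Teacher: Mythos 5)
Your proof is correct and is essentially the paper's proof written out in full: the paper simply records the observation that the optimal time $s$ attached to an approximate $\vv$ is $s=-\vv_2/\vv_1$ (so $\sgn(s) = -\sgn(\vv)$) and defers to the arguments of Claims~\ref{claim:theorem:1},~\ref{claim:theorem:2} and the proof of Proposition~\ref{prop:abstract:markoff} specialized to $m=n=1$. You carry out the sign-tracking explicitly, and your verification that the extracted approximates satisfy $\sgn(\vv_k)=-1$ when $s_k\to+\infty$ (via discreteness forcing $|y_k|\to\infty$ while the error $s_k^{-(1-1/\nu)}\to 0$) is precisely the detail the paper leaves to the reader.
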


\begin{proof} (\ref{eq:dioph:plus}) and (\ref{eq:dioph:minus}) are straightforward generalizations of~\cite[Proposition 3.3]{AM1}. There is a typo in the statement there, the limits are given as $\mu^{\pm}$ when they should be $1- \frac{1}{\mu^{\pm}}$. We simply observe that in the proof of Proposition~\ref{prop:dioph:2}, the time $s$ corresponding to an approximate $\vv$ is given by $s = - \frac{\vv_1}{\vv_2}$. For (\ref{eq:markoff:plus}) and (\ref{eq:markoff:minus}), the same observation applied to the proof of Proposition~\ref{prop:abstract:markoff} in the case $m = n = 1$ yields the result.
\end{proof}

\section{Deviations}\label{sec:devproof} In this section, we prove Theorem~\ref{theorem:hyp:deviations}, assuming Theorem~\ref{theorem:hyp:excursions}. The basic idea of the proof is that if a horocycle trajectory is deep into the cusp, it cannot return to the compact part of the space too quickly, and thus, it must spend a definite proportion of time \emph{outside} the compact part. This bears some spiritual similarity to the arguments of \cite[\S4]{Strombergsson}. 

Fix notation as in \S\ref{subsubsec:hyp}, and fix $x, x_0 \in X = SL(2, \R)/\Gamma$, and assume $x$ is not cuspidal, and $$\gamma = \gamma(x) =  \limsup_{t \rightarrow \infty} \frac{d(g_t x, x_0)}{t} >0.$$ \noindent By construction, $\gamma \in [0, 1]$. Theorem~\ref{theorem:hyp:excursions} then states that \begin{equation}\label{eq:beta:hyp}\beta = \beta(x) = \limsup_{|s| \rightarrow \infty} \frac{d(h_s x, x_0)}{\log|s|} = 1+ \gamma >1.\end{equation} Fix $\epsilon \in (0, \gamma)$ and $\delta \in (0, 1)$, and let $\kappa = 1 -\gamma + \frac{3\epsilon}{2}$, and $\alpha = \gamma-1-\epsilon$. Note that $\kappa >0$ and $\alpha <0$, and $\alpha + \kappa = \frac{\epsilon}{2} >0$. Let $c = c_{\Gamma}$ be such that $$\mu(y \in X: d(y, x_0) > R ) < ce^{-R}$$  
\noindent Given $S>0$, let $s_0$ be such that $|s_0| > S$ be such that 
\begin{eqnarray}\label{eq:s0:def} d(h_{s_0} x, x_0) &\geq& \left(\beta -\frac{\epsilon}{2}\right) \log s_0 \nonumber \\ 2cs_0^{-\frac{\epsilon}{2}} &<&  \delta
\end{eqnarray}
\noindent We can pick such an $s_0$ by (\ref{eq:beta:hyp}). Without loss of generality we assume $s_0 >0$. Since we will consider the interval $(-s_0, s_0)$ our arguments are independent of the sign. Let $C = B(x_0, \kappa \log s_0)$. Note that \begin{equation}\label{eq:C:vol} \mu(C) > 1 - cs_0^{-\kappa}\end{equation} and by (\ref{eq:s0:def}), $$d(h_{s_0} x, C) > \left(\beta - \frac{\epsilon}{2} - \kappa\right)\log s_0.$$ Since for all $s \in \R, y \in X$, $d(h_s y, y) \le 2 \log s$, the trajectory $\{h_s x\}_{s \in \R}$ has to have been outside of $C$ for time at least $\tau = t_0^{\gamma-\epsilon}$ since $$2\log \tau = \left(\beta - \frac{\epsilon}{2}-\kappa\right)\log t_0.$$ Thus, \begin{equation}\label{eq:s0:ave} \int_{-s_0}^{s_0} \chi_C(h_s x) ds  \le 2s_0 - s_0^{\gamma - \epsilon},\end{equation} and \begin{equation}\label{eq:C:s0:vol} 2 s_0 \mu(C) > 2s_0 - 2cs_0^{1-\kappa}\end{equation} The second condition in (\ref{eq:s0:def}) yields that $2 s_0 \mu(C) > \int_{-s_0}^{s_0} \chi_C(h_s x) ds$. Thus, we have
\begin{eqnarray}\label{eqnarray:dev:payoff} \nonumber \left| \int_{-s_0}^{s_0} \chi_C( h_s x) ds  - 2s_0 \mu (C)\right| &=& 2s_0 \mu(C) - \int_{-s_0}^{s_0} \chi_C( h_s x) ds \\ \nonumber &\geq& \left(2s_0 - 2cs_0^{1-\kappa} \right) - \left(2s_0 - s_0^{\gamma - \epsilon}\right) \\ \nonumber& =& s_0^{\gamma -\epsilon} - 2cs_0^{1-\kappa} \\ \nonumber & = & s_0^{\gamma - \epsilon} \left( 1 - 2 cs_0^{-\frac{\epsilon}{2} }\right) \\  &>& s_0^{\gamma - \epsilon} \left( 1 - \delta \right)
\end{eqnarray} where the last inequality again follows from the second condition of (\ref{eq:s0:def}). This yields (\ref{eq:hyp:deviations}) and concludes the proof of Theorem~\ref{theorem:hyp:deviations}.\qed\medskip

\end{document}